\documentclass[reqno]{amsart}
\usepackage{amssymb,amscd,verbatim, amsthm,graphics, color,latexsym,amsmath,multicol}
\usepackage{fancybox}
\usepackage{longtable}

\usepackage[all]{xy}

\newcommand \fk[1]{{{\mathfrak #1}}}
\newcommand \C[1]{{\mathcal #1}}

\newcommand \wti[1]{{\widetilde {#1}}}

\newcommand\fg{\mathfrak g}

\newcommand\Ad{{\rm {Ad}}}

\def\i{^{-1}}

\newcommand \bC{{\mathbb C}}
\newcommand \bF{{\mathbb F}}
\newcommand \bH{{\mathbb H}}
\newcommand \bR{{\mathbb R}}
\newcommand \bZ{{\mathbb Z}}

\newcommand \bQ{{\mathbb Q}}

\newcommand\CB{{\C B}}
\newcommand\CC{{\C C}}

\newcommand\CO{{\C O}}

\newcommand\bbq{{\overline{\bQ_l}}}

\newcommand\ep{{\epsilon}}

\newcommand\om{{\omega}}
\newcommand\al{{\alpha}}

\newcommand\fh{{\mathfrak h}}

\newcommand\bfr{{\mathbf r}}

\def\ge{\geqslant}
\def\le{\leqslant}

\def\b{\beta}

\def\d{\delta}

\def\e{\epsilon}

\def\s{\sigma}

\def\l{\lambda}

\def\x{\xi}

\newtheorem{theorem}{Theorem}[section]

\newtheorem{corollary}[theorem]{Corollary}
\newtheorem{lemma}[theorem]{Lemma}
\newtheorem{proposition}[theorem]{Proposition}

\theoremstyle{definition}
\newtheorem{definition}[theorem]{Definition}
\newtheorem{remark}[theorem]{Remark}
\newtheorem{example}[theorem]{Example}

\newcommand\Hom{\operatorname{Hom}}
\newcommand\End{\operatorname{End}}
\newcommand\Ind{\operatorname{Ind}}

\newcommand\tr{\operatorname{tr}}

\newcommand\im{\operatorname{im}}

\newcommand\triv{\mathsf{triv}}
\newcommand\sgn{\mathsf{sgn}}
\newcommand\refl{\mathsf{refl}}
\newcommand\St{\mathsf{St}}

\newcommand\id{\mathsf{id}}
\newcommand\reg{\mathsf{reg}}

\newcommand\ev{\mathsf{even}}
\newcommand\odd{\mathsf{odd}}

\newcommand\Pin{\mathsf{Pin}}

\newcommand\dell{{\delta\mathsf{-ell}}}
\newcommand\gen{\mathsf{gen}}
\newcommand\Sg{\mathsf{Sg}}
\newcommand\DP{\mathsf{DP}}
\newcommand\qDP{\mathsf{qDP}}
\newcommand\Irr{\mathsf{Irr}}

\newcommand\sol{\mathsf{sol}}
\newcommand\gdim{\mathsf{gdim}}
\newcommand\gEP{\mathsf{gEP}}
\newcommand\gch{\mathsf{gch}}

\newcommand\Ext{\operatorname{Ext}}

%%%%%%%%%%%%%%%%%%%%%%%%%%%%%%%%%%%%%%%
%  The tableau macro  %%%%%%%%%%%%%%%%%%%%%%%%%%%
%%%%%%%%%%%%%%%%%%%%%%%%%%%%%%%%%%%%%%%

\newlength{\tabwidth}
\newlength{\tabheight}
\setlength{\tabwidth}{2ex}
\setlength{\tabheight}{2ex}
\newlength{\tabrule}
\newlength{\tabwidthx}
\newlength{\tabheightx}

\def\gentabbox#1#2#3#4{\vbox to \tabheight{\setlength{\tabrule}{#3}%
  \setlength{\tabwidthx}{#1\tabwidth}\addtolength{\tabwidthx}{\tabrule}%

\setlength{\tabheightx}{#2\tabheight}\addtolength{\tabheightx}{-\tabheight}%
  \hbox to #1\tabwidth{%
    \hspace{-0.5\tabrule}\rule{\tabrule}{#2\tabheight}\hspace{-\tabrule}%
    \vbox to #2\tabheight{\hsize=\tabwidthx%
      \vspace{-0.5\tabrule}\hrule width\tabwidthx height\tabrule%
      \vspace{-0.5\tabrule}\vfil%
      \hbox to \tabwidthx{\hss#4\hss}%
        \vfil\vspace{-0.5\tabrule}%
      \hrule width\tabwidthx height\tabrule\vspace{-0.5\tabrule}}%
    \hspace{-\tabrule}\rule{\tabrule}{#2\tabheight}\hspace{-0.5\tabrule}}%
  \vspace{-\tabheightx}}}
\def\genblankbox#1#2{\vbox to \tabheight{\vfil\hbox to
#1\tabwidth{\hfil}}}
\def\tabbox#1#2#3{\gentabbox{#1}{#2}{0.4pt}{\strut #3}}

\catcode`\:=13 \catcode`\.=13 \catcode`\;=13 
\catcode`\>=13 \catcode`\^=13
\def:#1\\{\hbox{$#1$}}
\def.#1{\tabbox{1}{1}{$#1$}}
\def>#1{\tabbox{2}{1}{$#1$}}
\def^#1{\tabbox{1}{2}{$#1$}}
\def;{\genblankbox{1}{1}\relax}
\catcode`\:=12 \catcode`\.=12 \catcode`\;=12 
\catcode`\>=12 \catcode`\^=7

\numberwithin{equation}{subsection}

\begin{document}

\title[Green polynomials]{Green polynomials of Weyl groups, elliptic
  pairings, and the extended Dirac index}

\author{Dan Ciubotaru}
        \address[D. Ciubotaru]{Dept. of Mathematics\\ University of
          Utah\\ Salt Lake City, UT 84112}
        \email{ciubo@math.utah.edu}

\author{Xuhua He}
\address[X. He]{Dept. of Mathematics and Institute for Advanced Study\\Hong Kong University of Science
  and Technology\\Clear Water Bay, Kowloon, Hong Kong}
\email{maxhhe@ust.hk}

\begin{abstract}
We provide a direct connection between Springer theory, via Green
polynomials, the irreducible representations of the pin cover $\wti
W$, a certain double cover of the Weyl group $W$, and an extended Dirac operator
for graded Hecke algebras. Our approach leads to a new and uniform
construction of the irreducible genuine $\wti
W$-characters. In the process, we  give a
  construction of the action by an outer automorphism of the Dynkin
  diagram on the cohomology groups of Springer theory, and we also introduce a $q$-elliptic
pairing for $W$ with respect to the reflection representation $V$.
These constructions are of independent interest. The $q$-elliptic pairing is a generalization of the elliptic pairing of $W$ introduced by Reeder, and it is also related to S. Kato's notion of (graded) Kostka systems for the semidirect product $A_W=\bC[W]\ltimes S(V)$. 
\end{abstract}

\thanks{The authors thank George Lusztig for his suggestions regarding
  $\d$-quasidistinguished nilpotent elements and Zhiwei Yun for his
  suggestions regarding the $W_\#$ action on the cohomology of
  Springer fibers. The authors also thank Syu Kato for helpful
  discussions about Kostka systems, and Roman Bezrukavnikov for pointing out the reference \cite{BM}. This research was supported in part
  by the NSF grant DMS-0968065 and by the HKRGC grant 602011.}

\maketitle 

\section{Introduction}

\subsection{} Graded affine Hecke algebras were defined by Lusztig
\cite{L1} in his study of representations of reductive
$p$-adic groups and Iwahori-Hecke algebras.
A Dirac operator $\C D$ for graded affine Hecke algebras
was defined in \cite{BCT}, and, by analogy with the setting of
Dirac theory for $(\fg,K)$-modules of real reductive groups, the
notion of Dirac cohomology was introduced. The Dirac cohomology and the Dirac index in
the Hecke algebra setting were further studied in \cite{COT,CT}. The
Dirac cohomology spaces are representations for a certain double cover
(``pin cover'') $\wti W$ of the Weyl group $W$. The irreducible
representations of $\wti W$ had been classified case by case in the
work of Schur, Morris, Read and others, see for example \cite{Mo,Rea,St}. Recently, it was remarked in \cite{C} (again case by
case) that there is a close relation between the representation theory of $\wti W$ and the geometry of the nilpotent cone in semisimple Lie algebras $\fg$.

\smallskip

In this paper, we provide a direct link between:
\begin{enumerate}
\item[(a)] the Springer $W$-action on
  cohomology groups and an extension of
  it to a $W\rtimes\langle\delta\rangle$-action, for the automorphism
  $\delta$ given by the action of the long Weyl group element;
\item[(b)] the irreducible representations of $\wti W$, and
\item[(c)] an extended Dirac index for tempered modules of graded Hecke algebras.
\end{enumerate}
 In particular,
our approach leads to a new, uniform construction of the
irreducible genuine $\wti W$-characters, see Theorem \ref{t:intro-2} below. The starting point is a
reinterpretation of the Lusztig-Shoji algorithm in terms of certain
$q$-elliptic pairings for $W$ with respect to the reflection
representation $V$ (see section \ref{sec:2.1}); here $q$ is an
indeterminate. This is a generalization of the elliptic pairing of $W$
introduced in \cite{Re}, see also \cite{OS,CT,COT}. The $q$-elliptic
pairing is also related to Kato's notion of (graded) Kostka systems
for the semidirect product $A_W=\bC[W]\ltimes S(V)$ and the graded Euler-Poincar\' e pairing. 

\subsection{} Let $G$ be a complex semisimple Lie group with Lie algebra $\fg$ and Weyl group $W$. Let $\C N$ be the set of nilpotent elements in $\fg$ and $\C N^\sol$ be the subset consisting of those nilpotent elements whose connected centralizers $Z_G(e)^0$  are solvable. For every element $e \in \C N$, let $A(e)=Z_G(e)/Z_G(e)^0$ be the component group of its centralizer and $\widehat{A(e)}_0$ be the set of $A(e)$-representations of Springer type. For $\phi\in \widehat{A(e)}_0$, we consider certain $q$-graded representations of $W$, denoted $X_q(e,\phi)$, see section \ref{sec:2.2}, defined using the Springer action \cite{Spr} on cohomology groups $H^{*}(\C B_e)^\phi=\Hom_{A(e)}[\phi,H^*(\C B_e)].$ By analyzing  the Lusztig-Shoji algorithm \cite{L5,Sh}, we prove first:

\begin{theorem}\label{t:intro-1} Let $e, e' \in \C N$. 
\begin{enumerate}
\item If $G \cdot e \neq G \cdot e'$, then $X_q(e,\phi)$ and $X_q(e',\phi')$ are orthogonal with respect to the $q$-elliptic pairing on $W$, for all $\phi\in\widehat{A(e)}_0$, $\phi'\in\widehat{A(e')}_0.$
\item The map $X_q(e,\phi)\to \phi$ is an isometry with respect to the $q$-elliptic pairing of $W$, and a certain $(q,M)$-elliptic pairing of $A(e)$, where $M$ is the $q$-graded $A(e)$-representation defined in (\ref{e:define-M}). When $e\in \C N^\sol$,  $M$ is the natural representation of $A(e)$ on the space of complex characters of the central torus in $Z_G(e)^0.$ 
\end{enumerate}
\end{theorem}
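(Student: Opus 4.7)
The plan is to recognize Theorem~\ref{t:intro-1} as a structural feature of the Lusztig--Shoji algorithm, once that algorithm is read as an orthogonalization procedure for the $q$-elliptic pairing. Recall that the algorithm takes as input the matrix $\Omega(q)$ whose $((e,\phi),(e',\phi'))$-entry is a graded $W$-inner product of the Springer pieces $H^*(\C B_e)^\phi$ and $H^*(\C B_{e'})^{\phi'}$, and produces a factorization $\Omega = P^{tr}\Lambda P$ in which $P(q)$ is upper unitriangular for the closure order on nilpotent orbits and $\Lambda(q)$ is block diagonal by orbit. The off-diagonal entries of $P$ encode corrections coming from smaller orbits in $\overline{G\cdot e}$, while the diagonal blocks of $\Lambda$ are governed by $A(e)$-representation theory.

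First, I would set the stage: define $\Omega(q)$ precisely as in section~\ref{sec:2.2}, recall the $q$-elliptic pairing of section~\ref{sec:2.1}, and re-state the Lusztig--Shoji characterization of $P$ and $\Lambda$. Then the main computation is to write $\langle X_q(e,\phi), X_q(e',\phi')\rangle^q_{\mathrm{ell}}$ as an alternating sum of ordinary graded $W$-inner products twisted by $\wedge^i V$, and match this alternating sum to $\Lambda$. The key idea is that $\wedge^\bullet V$ serves as a Koszul resolution device: it collapses precisely the triangular corrections accounted for by the off-diagonal entries of $P$, leaving only the single-orbit diagonal piece. Concretely, this should follow either from the Borel presentation $H^*(\C B) \cong S(V)/S(V)^W_+$ combined with the Koszul complex of $S(V)$, or from Kato's interpretation of Springer theory as a graded Kostka system for $A_W = \bC[W]\ltimes S(V)$, under which the $q$-elliptic pairing becomes the graded Euler--Poincar\'e pairing and $X_q(e,\phi)$ is a simple object; orthogonality of distinct simples then yields part~(1).

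For part~(2) I would analyze the diagonal block. The transverse slice at $e$ equips $V$ with an auxiliary $A(e)$-module structure, which is exactly the $q$-graded representation $M$ appearing in~(\ref{e:define-M}); when $e \in \C N^\sol$, the slice is controlled by the characters of the central torus in $Z_G(e)^0$, yielding the description of $M$ stated in the theorem. Rewriting the diagonal block $\Lambda_{(e,\cdot),(e,\cdot)}$ of the Lusztig--Shoji factorization in this language, one recognizes the graded $(q,M)$-elliptic pairing on $A(e)$, and the isometry in~(2) follows.

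The principal obstacle, I expect, is the precise homological bookkeeping in the second step: matching, with signs and graded shifts, the normalizations of Lusztig--Shoji (degree shifts by $\dim \C B_e$, Green polynomial conventions, ordering of orbits) with the Koszul sign conventions of the $q$-elliptic pairing, and verifying the resulting identity coefficientwise in $q$ rather than only after specialization. Once this translation is in place, both assertions follow by unwinding the triangular structure of the algorithm.
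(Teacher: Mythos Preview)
Your overall strategy --- recognize the $q$-elliptic matrix $M(q)$ as a transform of the Lusztig--Shoji data and read off block-diagonality from $\Lambda(q)$ --- is the paper's strategy too. But two of your key steps are either wrong or too vague to carry the argument.

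First, your shortcut for part~(1) via Kato's category does not work as stated: the Springer modules $\C X_q(e,\phi)$ are \emph{not} simple $A_W$-modules (the simples are the irreducible $W$-representations with $S(V)$ acting through the augmentation), so ``orthogonality of distinct simples'' is not available. In Kato's framework the $\C X_q(e,\phi)$ are the standard-like objects of a Kostka system, and their Ext-orthogonality across orbits is itself a nontrivial theorem, not a formal consequence. What the paper actually does is a direct matrix computation: using Chevalley's identity $X_q(1)\otimes\wedge^{-q}V=p(q)\,\triv$ one inverts $\Omega(q)$ explicitly, and a short calculation shows that $p(q)\,K(q)^t\Omega(q)^{-1}K(q)$ equals the $q$-elliptic Gram matrix $M(q)$. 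From $K\Lambda K^t=\Omega$ this gives $\Lambda(q)M(q)=p(q)\,\id$, and block-diagonality of $\Lambda$ forces block-diagonality of $M$. Your Koszul intuition is morally the Chevalley identity, but you need that identity (or an equivalent), not simplicity.

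Second, your mechanism for part~(2) --- ``the transverse slice equips $V$ with an $A(e)$-module structure which is $M$'' --- is not how $M$ arises, and I do not see how to make it precise. The paper obtains the diagonal blocks by computing $\Lambda(q)^{-1}$: the entries are essentially $|Z_G(e_c)^{F}|$ evaluated at $q^{-1}$, which one factors through the reductive quotient $H_e$ of $Z_G(e)^0$ as $q^N\det_{V_Z}(q-F_{c,0})\prod_d\det_{M(d)_c}(q^d-F_{c,0})$. One then has to \emph{construct}, case by case, an $A(e)$-representation $M(d)$ whose characteristic polynomial at each conjugacy class matches the twisted-Frobenius determinant on the degree-$d$ invariants; this is Proposition~2.6 and requires separate arguments for classical and exceptional types. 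The assembled $M=\wedge^{-q}V_Z\otimes\bigotimes_d\wedge^{-q^d}M(d)$ then yields the $(q,M)$-pairing. Nothing here comes from a slice; it is finite-group counting plus an explicit construction of $M(d)$.
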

The case $q=1$ was known before from \cite{Re}, where it was obtained by different methods. 

The main application to $\wti W$-representations is the specialization
$q=-1$. This case is related to an action on $H^{*}(\CB_e)$ of the
extended group $W_\#=W\rtimes \langle\delta\rangle$, where $\delta$ is
the automorphism of $G$ corresponding to $-w_0$. Here $w_0$ is the longest
Weyl group element. We define this action in section \ref{w3}, by
extending the Springer action, and relate it to the results in
\cite{BS,HS,Sh} to obtain the following theorem.

\begin{theorem}\label{t:intro-delta} For every $e\in \C N$ and $i\ge 0,$
$$\tr(\delta w, H^{2i}(\C B_e))=(-1)^i\sgn(w_0)\tr(w_0w,H^{2i}(\C B_e)).$$
\end{theorem}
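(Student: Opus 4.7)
The strategy is to reduce the identity to a scalar statement about a single distinguished element of $W_\#$, and then to compute that scalar using the extended Springer action defined in Section \ref{w3} together with the Green polynomial duality of \cite{BS,HS,Sh}.

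First, since $\delta$ acts on the reflection representation $V$ as $-w_0$, its conjugation action on $W$ coincides with that of $w_0$: indeed, $\delta s_\alpha\delta^{-1}=s_{-w_0(\alpha)}=s_{w_0(\alpha)}=w_0s_\alpha w_0^{-1}$, since a reflection is insensitive to the sign of its root. Hence $\delta$ commutes with $w_0$ in $W_\#$, and $\eta:=\delta w_0=w_0\delta$ is an involution that acts on $V$ as $-\id$. Using $w_0^{-1}=w_0$ to rewrite $\delta w=\eta\cdot(w_0 w)$ and taking traces on $H^{2i}(\C B_e)$, the theorem reduces to the scalar assertion
$$\eta\bigm|_{H^{2i}(\C B_e)} \;=\; (-1)^i\sgn(w_0)\cdot\id.$$

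Next I would extract the two signs separately. The factor $(-1)^i$ comes from the Borel-type presentation of Springer cohomology: under the identification of $V$ with $H^2(\C B)$ via first Chern classes of tautological line bundles, any element acting as $-\id$ on $V$ acts as $(-1)^i$ on $H^{2i}(\C B)$, and this passes through the surjection $H^{2i}(\C B)\twoheadrightarrow H^{2i}(\C B_e)$. The factor $\sgn(w_0)$ records the global normalization of the geometric lift of $\eta$ fixed in Section \ref{w3}; it is the analogue, for the outer automorphism $\delta$, of the classical fact that $w_0$ acts on the top cohomology of a Springer fiber with an extra factor of $\sgn(w_0)$. This normalization is matched by the Beynon--Spaltenstein/Shoji duality for Green polynomials, which relates the Green polynomial of $w_0w$ to that of $w$ with an extra $\sgn(w_0)$ precisely on $\delta$-stable orbits.

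The main obstacle will be the normalization step. While $\eta\in W_\#$ is canonical as a group element, its geometric lift to an automorphism of $\C B_e$ is only determined up to $Z_G(e)^0$, and one must verify that the lift chosen in Section \ref{w3} produces exactly $(-1)^i\sgn(w_0)$, rather than merely $\pm(-1)^i$, on each graded piece. This is precisely the role of the results of \cite{BS,HS,Sh}: they pin down the sign via the compatibility of the extended Springer action with the top class of $\C B_e$ and with Poincar\'e duality twisted by the sign character.
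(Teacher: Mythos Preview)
Your reduction to the scalar claim---that $\eta=\delta w_0$ is central in $W_\#$ and should act on each $H^{2i}(\C B_e)$ by a fixed scalar---is correct and is exactly what the paper eventually extracts. The gap is in how you compute that scalar. The Borel-presentation step (``$\eta$ acts as $-\id$ on $V\cong H^2(\C B)$, hence as $(-1)^i$ on $H^{2i}(\C B)$, hence on $H^{2i}(\C B_e)$ via the surjection'') silently assumes two compatibilities that are not available to you: first, that the Springer $W$-action built from $\End(\Psi)\cong\bC[W]$ in Section~\ref{w3} is a ring automorphism of $H^*(\C B)$ agreeing with the natural reflection action on $H^2$ (it is defined sheaf-theoretically, not by automorphisms of $\C B$, and the two standard normalizations differ by a global $\sgn$ twist---which is precisely a factor of $\sgn(w_0)$ on the action of $w_0$); second, that the paper's $\delta$-action on $H^*(\C B_e)$, which involves the correction $\Ad(g_1)^*$ for a \emph{specific} $g_1$ depending on $e$, is the one obtained by restriction from the geometric $\delta$ on $H^*(\C B)$. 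If your Borel argument really pinned the scalar to $(-1)^i$, there would be no room for the extra $\sgn(w_0)$ you then add back ``by normalization,'' so the two halves of your plan are in tension.

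The paper's argument is organized differently and does not separate the two signs. It works over a large finite field, writes the twisted Frobenius $F'=F\delta$, and invokes the Hotta--Springer/Shoji/Beynon--Spaltenstein identity (Theorem~\ref{HS-S-BS}) comparing $\tr((F')^*w,H^{2i}(\C B_e))$ with $(-1)^i\sgn(w_0)\tr(F^*w_0w,H^{2i}(\C B_{\sigma(e)}))$. By choosing $e$ \emph{split} for a suitable split Frobenius $F_1$, both Frobenii act by $q^i$ on $H^{2i}$ and can be stripped off, yielding the trace identity directly; only afterwards does one read off that $\eta$ acts by $(-1)^i\sgn(w_0)$. Crucially, the element $g_1$ that fixes the $\delta$-action is \emph{constructed from} this computation (see Remark~\ref{r:g1}), not chosen beforehand---so the results of \cite{BS,HS,Sh} are the engine of the proof, not a final sign check. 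A separate passage from large positive characteristic to characteristic zero is also required.
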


The main technical difficulty that one needs to bypass is that $\delta$ does not immediately
act on $\C B_e$ since $\delta(e)\neq e$ in general. Theorem \ref{t:intro-delta} turns out to be also intrinsically related to the
$\delta$-extended trace and the index of
a  Dirac operator on tempered modules of the extended graded
Hecke algebra $\bH_\#=\bH\rtimes\langle\delta\rangle$, see section
\ref{sec:twistedirac}, in particular Theorem
\ref{t:delta-twisted}. 
The extended Dirac operator and its index are
natural complements to the   case studied in
\cite{BCT,COT,CT}, and when $w_0$ is not central, they provide more
information. For example, when $G=PGL(n),$ the only tempered
$\bH_n^A$-module with nonzero Dirac index is the Steinberg module $\St$.
On the other hand, the tempered modules with nonzero extended Dirac
index are those of the form $\Ind_{\bH_J^A}^{\bH^A_n}(\St)$,
where $\bH_J^A=\prod_{j=1}^k\bH^A_{m_j}$, with $\sum_{j=1}^km_j=n$ and
all $m_j$ distinct.

\subsection{}
We explain next the main results concerning $\wti W$-representations
that we obtain via this approach. 

 Let $R(\wti W)$ be the (complexification of the) Grothendieck group of finite dimensional $\wti W$-representations, and $R(\wti W)_\gen$ the subspace spanned by the genuine irreducible $\wti W$-representations, i.e., those which do not factor through to $W$. Let $V$ be the reflection representation of $W$. Let $C(V)$ be the Clifford algebra of $V$ with respect to a $W$-invariant inner product $(~,~)$, and let $\C S$ be the unique simple spin $C(V)$-module (when $\dim V$ is even), respectively the sum of the two simple spin $C(V)$-modules (when $\dim V$ is odd). 
For every $e \in \C N$ and every $\phi\in \widehat{A(e)}_0$, set
\begin{equation}\label{Sigma-tilde}
\wti\Sigma(e, \phi)=X_{-1}(e, \phi)\otimes \C S,
\end{equation}
 By definition, this is a (virtual) character in $ R(\wti W)_\gen$, self
dual under tensoring with $\sgn$.  Theorem \ref{t:intro-delta} implies that
\begin{equation}\label{trace-X-1}
\tr(w,X_{-1}(e, \phi))=(-1)^{\dim\C B_e}\sgn(w_0)\tr(ww_0\delta,H^*(\C
B_e)^\phi).
\end{equation}
We also remark that 
$\wti\Sigma(e, \phi)$ depends only on the image of $\phi$ in $$\overline
R_{-1}(A(e))=R_{-1}(A(e))/\text{rad}\langle~,~\rangle^{-1}_{A(e)}.$$ To
see this, denote 
\begin{equation}
W_{\text{(-1)-ell}}=\{w\in W: {\det}_V(1+w)\neq 0\}.
\end{equation}
As we explain in section \ref{sec:4}, the radical of the
$(-1)$-elliptic form $\langle~,~\rangle^{-1}_W$ on $R(W)$ can
be identified with the space of characters supported on the complement
$W\setminus W_{\text{(-1)-ell}}.$ On the other hand, $\C S$ is
supported precisely on the preimage of $ W_{\text{(-1)-ell}}$ in $\wti
W$, by (\ref{spin-squared}), thus tensoring with $\C S$ kills the
radical of $\langle~,~\rangle^{-1}_W$ on $R(W)$. Because of this, it
makes sense to use the notation $\wti\Sigma(e,[\phi]),$ where $[\phi]$
is the image of $\phi$ in  $\overline R_{-1}(A(e)).$
Moreover, we may extend the definition of $\wti\Sigma(e,[\phi])$
linearly with respect to $\overline R_{-1}(A(e))$ and thus talk about
$\wti\Sigma(e,[\chi])$ for an arbitrary element $[\chi]\in \overline
R_{-1}(A(e))$ in the span of $\widehat{A(e)}_0$. The main results of section \ref{sec:5}  may be
summarized as follows. Set $a_V=1$, if $\dim V$ is even, and $a_V=2$, if $\dim V$ is odd.

\begin{theorem}\label{t:intro-2} \ 
\begin{enumerate}
\item Let $e \in \C N$  and $\phi\in \widehat{A(e)}_0$ be given. The
  character $\wti\Sigma(e,[\phi])\neq 0$ if and only if $e\in
  \C N^\sol$. In this case, $\wti\Sigma(e,[\phi])$ is the character of
  a genuine $\wti W$-representation.
\item Every genuine irreducible $\wti W$-character $\wti\sigma$ occurs in a
   $\wti\Sigma(e,[\phi])$, $e \in \C N^\sol$. Moreover, the $G$-orbit
   of $e$ is uniquely determined by $\wti\sigma.$ 
\item If $e \in \C N^\sol$, then for all $[\chi],[\chi']$, $$\langle\wti\Sigma(e,[\chi]),\wti\Sigma(e,[\chi'])\rangle_{\wti W}=a_V\langle[\chi],[\chi']\rangle_{A(e)}^{-1}.$$  
%If $[\chi]$ is an element of norm $1$ in $\overline R_{-1}(A(e))$, then $\wti\Sigma(e,[\chi])$ is  an irreducible ($\sgn$ self dual) $\wti W$-character, when $\dim V$ is even, and it is the sum of  two ($\sgn$ dual to each other) irreducible $\wti W$-characters, when $\dim V$ is odd. In particular, this is the case when $e$ is a distinguished nilpotent element, and $[\chi]=[\phi],$ $\phi\in\widehat{A(e)}_0.$
\end{enumerate}
\end{theorem}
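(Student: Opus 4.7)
The approach is to convert everything to a $W$-level calculation via the spin-module identity, and then invoke Theorem~\ref{t:intro-1}. The key input from (\ref{spin-squared}) is that for any lift $\wti w\in\wti W$ of $w\in W$,
\[
|\tr(\wti w,\C S)|^2 \;=\; a_V\cdot{\det}_V(1+w),
\]
which vanishes precisely on the preimage of $W\setminus W_{\text{(-1)-ell}}$. Since $X_{-1}(e,\chi)$ factors through $W$, we have $\tr(\wti w,X_{-1}(e,\chi)\otimes\C S) = \tr(w,X_{-1}(e,\chi))\cdot\tr(\wti w,\C S)$, and averaging over $\wti W$ collapses to an average over $W$:
\begin{align*}
\langle\wti\Sigma(e,[\chi]),\wti\Sigma(e,[\chi'])\rangle_{\wti W}
&= \frac{1}{|W|}\sum_{w\in W}\tr(w,X_{-1}(e,\chi))\overline{\tr(w,X_{-1}(e,\chi'))}\cdot a_V\,{\det}_V(1+w)\\
&= a_V\cdot\langle X_{-1}(e,\chi),X_{-1}(e,\chi')\rangle^{-1}_W.
\end{align*}
Theorem~\ref{t:intro-1}(2) at $q=-1$ identifies the last pairing with $\langle[\chi],[\chi']\rangle^{-1}_{A(e)}$, giving (3).

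Statement (1) then follows from (3) applied with $[\chi]=[\chi']=[\phi]$: the norm $\langle\wti\Sigma(e,[\phi]),\wti\Sigma(e,[\phi])\rangle_{\wti W}=a_V\langle[\phi],[\phi]\rangle^{-1}_{A(e)}$ is strictly positive precisely when $[\phi]$ lies outside the radical of the $(-1,M)$-pairing on $A(e)$. When $e\in\C N^\sol$, the description of $M$ as the character space of the central torus of $Z_G(e)^0$ makes this pairing nondegenerate on Springer-type representations, so $\wti\Sigma(e,[\phi])\neq 0$; when $e\notin\C N^\sol$, one argues conversely that all such $\phi$ lie in the radical, which via Theorem~\ref{t:intro-1}(2) translates into $X_{-1}(e,\phi)$ being supported off $W_{\text{(-1)-ell}}$. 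Genuineness of the nonzero $\wti\Sigma(e,[\phi])$, as opposed to its being merely a virtual character, is obtained via (\ref{trace-X-1}) and Theorem~\ref{t:delta-twisted}: the former rewrites $\wti\Sigma(e,[\phi])$ in terms of $\delta$-twisted traces on Springer cohomology, which the latter realizes, up to sign, as the $\wti W$-character of the extended Dirac kernel of an honest tempered $\bH_\#$-module attached to $(e,\phi)$.

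For (2), Theorem~\ref{t:intro-1}(1) combined with the same spin-tensor identity yields $\langle\wti\Sigma(e,[\phi]),\wti\Sigma(e',[\phi'])\rangle_{\wti W}=0$ whenever $G\cdot e\neq G\cdot e'$, which simultaneously gives orthogonality across orbits and uniqueness of the orbit attached to any irreducible $\wti\sigma$. Exhaustion is a dimension count: by (3) and (1), for each solvable orbit the family $\{\wti\Sigma(e,[\phi]):[\phi]\in\overline R_{-1}(A(e))\}$ spans a subspace of $R(\wti W)_\gen$ of dimension $\dim\overline R_{-1}(A(e))$; summing over $G$-orbits in $\C N^\sol$ one matches the total number of $(-1)$-regular conjugacy classes in $W$, which equals $\dim R(\wti W)_\gen$, and orthogonality across orbits then forces the collection to span all of $R(\wti W)_\gen$.

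The main obstacle is the vanishing half of (1) for non-solvable $e$: one must show that every Springer-type character of $A(e)$ is in the kernel of the $q=-1$ specialization of the $(-1,M)$-pairing. A conceptual proof should go through parabolic induction: when $Z_G(e)^0$ fails to be solvable, $e$ arises by Lusztig--Spaltenstein induction from a distinguished nilpotent in a proper Levi, and the Lusztig--Shoji construction then presents $X_{-1}(e,\phi)$ as a combination of characters induced from proper parabolic subgroups of $W$, which vanish on $W_{\text{(-1)-ell}}$.
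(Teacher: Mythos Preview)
Your argument for (3) is correct and is exactly what the paper does (Proposition~\ref{p:iota}). The problems are in (1) and (2).

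\medskip

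\textbf{Genuineness.} Your claim that $\wti\Sigma(e,[\phi])$ is realised ``up to sign, as the $\wti W$-character of the extended Dirac kernel'' is not what Theorem~\ref{t:delta-twisted} says. What one actually gets (Proposition~\ref{p:twisted-Dirac}, Theorem~\ref{t:delta-twisted}(2)) is that the trace of $\wti\Sigma(e,[\phi])$ on $\wti W'$ agrees, up to scalar, with the trace of the extended Dirac \emph{index} $I_\#(E_{e,h,\phi})=H^{D^+}_\#-H^{D^-}_\#$ on the coset $\wti W'z$. The index is a \emph{virtual} $\wti W'_\#$-module; even for tempered (hence unitary) $X$, where $H^{D^\pm}_\#=\ker D^\pm_\#$ are honest, their difference need not be. So this does not prove nonnegativity of multiplicities. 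The paper's route is different: it first uses the Dirac theory only to extract a central-character constraint (Corollary~\ref{Casimir-action}, via Vogan's conjecture, Theorem~\ref{t:Vogan}), namely that any $\wti\sigma$ occurring in $\iota(X_{-1}(e,\phi))$ has $W\cdot\nu_{\wti\sigma}=W\cdot h_e$. Then the upper-triangularity of Green polynomials gives
\[
\iota(X_{-1}(e,\phi))=\sigma(e,\phi)\otimes\C S-\sum_{e'>e} K(-1)^{-1}_{(e,\phi),(e',\phi')}\,\iota(X_{-1}(e',\phi')),
\]
and since no $\wti\sigma$ with $\nu_{\wti\sigma}\in W\cdot h_e$ can appear on the right for $e'\neq e$, the multiplicity of $\wti\sigma$ in the left-hand side equals $\langle\sigma(e,\phi)\otimes\C S,\wti\sigma\rangle_{\wti W}\ge 0$. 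This triangular argument is the missing step.

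\medskip

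\textbf{Exhaustion.} Your dimension count is false in general. The image of $\iota$ has dimension equal to the number of $\delta$-elliptic conjugacy classes (equivalently $|W_{\text{(-1)-ell}}/W|$), but this does \emph{not} equal $\dim R(\wti W)_\gen$, nor even $\dim R(\wti W)_\gen^\Sg$, when $W$ is of type $D_{2n}$ or $E_7$ (see Remark~\ref{r:8.3}). So the $\wti\Sigma(e,[\phi])$ do not span $R(\wti W)_\gen$, and a spanning argument cannot give exhaustion. The paper's argument (Theorem~\ref{l:exhaustion}) is instead a one-line Frobenius reciprocity: for any genuine irreducible $\wti\sigma$, the $W$-representation $\wti\sigma\otimes\C S$ is nonzero, so it pairs nontrivially with some element of the basis $\{X_{-1}(e,\phi)\}$ of $R(W)$; hence $\langle\wti\sigma,\iota(X_{-1}(e,\phi))\rangle_{\wti W}\neq 0$, which also forces $e\in\C N^\sol$. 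Disjointness across orbits then follows from the central-character constraint above, not from orthogonality of the $\wti\Sigma$'s (your orthogonality argument for uniqueness is valid \emph{only after} genuineness is established, so you need an independent proof of the latter).

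\medskip

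Your treatment of the vanishing direction of (1) is also softer than the paper's: the paper observes directly (proof of Proposition~\ref{p:6.1}) that if $Z_G(e)^0$ is not solvable then $(q^2-1)\mid |Z_G(e_c)^{F}|$ for every $c$, so $\zeta_{e,c}(\pm1)=0$ in Corollary~\ref{c:2.4}, and $\overline R_{-1}(W)^e=0$.
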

We calculate the structure of the spaces $\overline R_{-1}(A(e))$, $e \in \C N^\sol$, and we refine part (3) of Theorem \ref{t:intro-2} in Appendix \ref{a:component}. We show that for every $e \in \C N^\sol$, there exists
 an orthogonal basis  $\{[\chi_1],\dots,[\chi_k]\}$ of $\overline
  R_{-1}(A(e))$ such that 
\begin{equation}
\wti\tau(e,[\chi_j])=\frac 1{a_e}\wti\Sigma(e,[\chi_j]), \qquad \text{
  here $a_e$ is a certain power of $2$,}
%\frac 1{2^{\lfloor \frac {n_e}2  \rfloor}}\wti\Sigma(u,[\chi_j]),
\end{equation}
%($a_e$ is certain  power of $2$) % depending on $\dim V_Z^{A(e)}$ and the parity of $n$)  
is either  an
irreducible ($\sgn$ self dual) $\wti W$-character or the sum of two ($\sgn$
dual to each other) irreducible $\wti W$-characters, with two
interesting exceptions: one family of
nilpotent orbits in type $D_n$ and one orbit in $E_7$ (see Appendix
\ref{a:component}, particularly Remark \ref{r:explain}). 

\smallskip

The proof of Theorem \ref{t:intro-2} is independent of the results of
\cite{C}, and in particular, together with Corollary \ref{Casimir-action},  it recovers uniformly \cite[Theorem
1.0.1]{C}. It is also independent of the previous known
classifications, e.g., \cite{Mo,Rea,St}. Our
proof relies on Theorem \ref{t:intro-1} with $q=-1$, and on Theorem
\ref{t:intro-delta} and its relation with the extended Dirac index in
section \ref{sec:twistedirac}, together with ``Vogan's conjecture''
\cite[Theorem 4.2]{BCT}.

\subsection{}
We mention two applications of our results.

By comparing Theorem \ref{t:intro-2} and Appendix \ref{a:component} with \cite{C}, the characters
  $\wti\tau(e, [\phi])$ can be easily identified in terms of the previous
  known classifications. From this point of view, (\ref{Sigma-tilde}) can immediately be interpreted to
give a character formula of $X_{-1}(e, \phi)$ on $w\in W_{\text{(-1)-ell}}$, or alternatively, using (\ref{trace-X-1}) and Theorem
\ref{t:intro-delta}, as a character
formula of $H^*(\C B_e)^\phi$ on $\delta$-twisted elliptic
conjugacy classes (see Lemma \ref{l:delta-elliptic} for the
definition). In this way, one obtains an extension of \cite[Theorem 1.1]{CT}. See section \ref{sec:char-formula} for details.

Theorem \ref{t:intro-2} can also be used to give a solution  in terms of
Kostka-type numbers  to the problem
of decomposing tensor products $\sigma\otimes\C S$, $\sigma\in\widehat
W$. See (\ref{e:tensor-decomp}) and
Corollary \ref{c:BMc}.

\subsection{}We conclude the introduction by giving a brief summary of
the structure of the paper. In section \ref{sec:2}, we recall certain
elements of the Lusztig-Shoji algorithm, and prove Theorem
\ref{t:intro-1}. In
section \ref{sec:sol}, we study the nilpotent elements with solvable
connected centralizer, i.e., $\C N^\sol.$ In particular, following a suggestion of
Lusztig, we prove that a nilpotent element $u$ is in
$\C N^\sol$ if and only if it is $\delta$-quasidistinguished, in the
sense of Definition \ref{d:twisted-quasi}. (This definition is the
natural generalization of the notion of quasidistinguished from
\cite{Re}.) 

In section \ref{w3}, we define the action of $W_\#$ on the cohomology
groups $H^*(\C B_e)$, extending the Springer action, and prove Theorem \ref{t:intro-delta}.
In section \ref{sec:4}, we relate the $(-1)$-elliptic pairing with a
$\delta$-twisted elliptic pairing, and consider the corresponding spaces of virtual
elliptic characters.

 In section
\ref{sec:twistedirac}, we introduce the extended Dirac operator for the
extended graded Hecke algebras $\bH_\#$, and 
define its index. Using Lusztig's geometric realization of
irreducible $\bH$-modules (\cite{L2,L3}), we relate the index of
tempered modules with the character formula in
Theorem \ref{t:intro-delta}. 
 In section \ref{sec:5}, we prove the results about $\wti
W$-representations, in particular, Theorem \ref{t:intro-2}. In
Appendix \ref{a:component}, we compute explicitly the spaces
$\overline R_{-1}(A(e))$ and the associated spin representations
$\wti\tau(e,[\chi])$. In
Appendix \ref{sec:3}, we present a relation between $q$-elliptic
pairings of $W$ and the Kostka systems of \cite{Ka}. 

\section{The Lusztig-Shoji algorithm and the $q$-elliptic pairing}\label{sec:2}

\subsection{}\label{sec:2.1} If $\Gamma$ is a finite group, let
$R(\Gamma)$ denote the Grothendieck group of finite dimensional
$\bC[\Gamma]$-modules. Let $\langle~,~\rangle_\Gamma$ be the character
pairing of $\Gamma.$ If $q$ is an indeterminate, set
$R_q(\Gamma)=R(\Gamma)\otimes_\bZ \bZ[q]$ and extend
$\langle~,~\rangle$ $\bZ[q]$-linearly
to $R_q(\Gamma)$.

Let $U$ be a finite dimensional $\bC$-representation of $\Gamma$ and
$\wedge^i U$ the $i$-th exterior power of $U$ viewed as a
$\Gamma$-representation. Denote
\begin{equation}
\wedge^{-q}U=\sum_{i\ge 0}(-q)^i \wedge^i U\in R_q(\Gamma).
\end{equation}
Define the $q$-elliptic product in $R_q(\Gamma)$ to be:
\begin{equation}
\langle\chi,\chi'\rangle_{\Gamma}^{q}:=\langle\chi\otimes\wedge^{-q}U,\chi'\rangle_\Gamma\in \bZ[q].
\end{equation}

The case of interest for us will be when $\Gamma$ is a Weyl group $W$
acting on the reflection representation $U=V.$

\subsection{}\label{sec:2.2}
In the rest of this section, let $\bF$ be a finite field and
let $G$ be a connected semisimple algebraic group split over $\bF$. Let $F:G\to G$ be the corresponding Frobenius map and $G^F=G(\bF)$ be the corresponding finite group of Lie type. %Let $W$ be the Weyl group and $V$ be the reflection representation of $W$, identified with the real vector space spanned by the characters of a maximal torus of $G$. Let $\fg$ be its Lie algebra and Let $\C N \subset \fg$ be the nilpotent cone. 

We assume furthermore that the characteristic of $F$ is sufficiently large. Specialize $q$ from
the previous section to the order of finite field $\bF$. Let $e \in \C N^F$ be given, and denote $\CO_e$ the nilpotent orbit of $e$. We set $A(e)=Z_G(e)/Z_G(e)^0$. Then $F$ acts trivially on $A(e)$ and there is a
one-to-one correspondence between $G^F$-orbits in $\CO_e^F$ and
conjugacy classes in $A(e)$. 

For every $e \in \C N$ denote $\C B_e$ the variety of Borel subalgebras of $\fg$
containing $e$ and let $d_e$ be its dimension. Springer \cite{Spr} defined an action of $W$ on the cohomology
groups (with rational coefficients) $H^j(\C B_e)$. This action
commutes with the natural $A(e)$-action. Moreover, it is known that
$H^j(\C B_e)=0$ unless $j$ is even (\cite{BS,DLP,Sh}).
Set
\begin{equation}
\widehat{A(e)}_0=\{\phi\in\widehat{A(e)}: H^{2d_e}(\C B_e)^\phi\neq 0\},
\end{equation}
the set of $A(e)$-representations of Springer type. For every pair
$(e, \phi),$ $\phi\in \widehat{A(e)}_0$, let $\sigma(e, \phi)\in \widehat W$ denote the
irreducible Springer representation afforded by
$H^{2d_e}(\C B_e)^\phi.$
For latter use, encode the
Springer correspondence as the bijective map:
\begin{equation}
\Psi:  G\backslash\{(e, \phi): e \in \C N,\phi\in\widehat {A(e)}_0\}\to
\widehat W,\quad \Psi((e, \phi))=\sigma(e, \phi).
\end{equation}
Define
\begin{equation}
\begin{aligned}
X_q(e)=\sum_{i\ge 0} q^{d_e-i}H^{2i}(\C B_e)\otimes\sgn\in R_q(W),\\
X_q(e,\phi)=\Hom_{A(e)}[\phi,X_q(e)]\in R_q(W).
\end{aligned}
\end{equation}
Thus $\sigma(e, \phi)$ occurs in degree $0$ in $X_q(e,\phi).$

Define $R_q(W)^e$ to be the subspace of $R_q(W)$ spanned by $\{X_q(e,\phi): \phi\in \widehat{A(e)}_0\}.$

\subsection{} 
The Lusztig-Shoji algorithm \cite{L5,Sh} gives a solution to a matrix equation
\begin{equation}\label{e:LS}
K(q)\Lambda(q) K(q)^t=\Omega(q),
\end{equation}
where the matrices $K(q),\Lambda(q),\Omega(q)$ are square matrices of
size $\#G\backslash\{(e, \phi):~e\in \C N,\phi\in\widehat{A(e)}_0\}$
with entries in $\bZ[q]$ to be defined next. We notice from the start
that since our normalization of $X_q(e,\phi)$ is different than the
usual one, in the sense that $\sigma(e, \phi)$ has degree $0$ in $q$
rather than top degree, we will  need to adjust in the definition of
$\Lambda(q)$ below.

Fix a set $\{e\}$ of representatives of $G$-orbits in $\C N$ and for
every such $e$, the set $\{\phi\}$ of representations in $\widehat{A(e)}_0.$

Let $K(q)$ denote the upper uni-triangular matrix whose $(e, \phi)$,
$(e',\phi')$ entry is given by the graded multiplicity of
$\sigma(e, \phi)$ in $X_q(e',\phi').$

Let $\Omega(q)$ be the symmetric matrix of fake degrees. More
precisely, the  $(e, \phi)$, $(e',\phi')$ entry in $\Omega(q)$ is the
graded multiplicity of $\sigma(e, \phi)\otimes \sigma(e',\phi')$ in
$X_q(1)$. Recall that $X_q(1)$ can be identified with the graded
representation of $W$ on the space of coinvariants of $W$ in $S(V).$
Moreover with this interpretation of $X_q(1)$, a well-known identity
of Chevalley is:
\begin{equation}\label{e:Xqprod}
X_q(1)\otimes \wedge^{-q}V=p(q)\triv,\quad \text{
  where } p(q)=\prod_i(1-q^{m_i});
\end{equation}
here $m_i$ are the fundamental degrees of $W$.

The matrix $\Lambda(q)$ is block-diagonal, with one block of size
$|\widehat{A(e)}_0|$ for each $e$. To define it precisely, we need
more notation. 
For every conjugacy class
$c$ of $A(e)$, denote by $\CO_e^F(c)$ the corresponding
$G^F$-orbits. We fix an element $e_c$ for each conjugacy class $c$ of $A(e)$. For $c=\{1\}$, we may choose $e_c=e$. We choose an element $g_c \in G$ such that $g_c \cdot e=e_c$ and set $x_c=g_c \i F(g_c) \in Z_G(e)$. Then the image $\bar x_c$ of $x_c$ in $A(e)$ is contained in $c$. 

For every $\phi\in \widehat {A(e)}$, define the $G^F$-class function
$f_\phi^e:\C N^F\to\bQ,$ by
\begin{equation}
f^e_\phi(x)=\begin{cases}\tr\phi(c),&\text{ if }x\in\CO_e^F(c),\\0,
  &\text{ if }x\notin \CO_e^F.\end{cases}
\end{equation}
Define a bilinear form on $\bQ$-valued functions on $\C N^F$, by
\begin{equation}
(f,f')=\sum_{x\in \C N^F} f(x) f'(x).
\end{equation}
Define the matrix $\wti \Lambda$ whose $(e, \phi),(e',\phi')$ entry is
\begin{equation}
(f^e_\phi,f^{e'}_{\phi'})=\delta_{e, e'}\sum_{c} |\CO^F_e(c)|\tr\phi(c)\tr\phi'(c).
\end{equation}

For every $(e, \phi)$, define the function $g_\phi^e:\C
N^F\to\bQ,$ constant on $G^F$-orbits, by
\begin{equation}
g^e_\phi(x)=\frac 1{|G^F||A(e)|}\begin{cases}\tr\phi(c) |c| |Z_G(e_c)^F|, &\text{ if }x\in\CO_e^F(c),\\0,
  &\text{ if }x\notin \CO_e^F.\end{cases}
\end{equation}
It is immediate that
\begin{equation}
(f^e_\phi,g^{e'}_{\phi'})=\delta_{e, e'}\langle\phi,\phi'\rangle_{A(e)}=\delta_{e, e'}\delta_{\phi,\phi'};
\end{equation} 
in other words, $\{g^{e}_\phi\}$ is the basis dual to $\{f^e_\phi\}.$ This means
that the inverse matrix $\wti\Lambda^{-1}$ has entries 
\begin{equation}\label{e:lambdatilde-inv}
(g^e_\phi,g^{e'}_{\phi'})=\frac {\delta_{e, e'}}{|A(e)|^2|G^F|}\sum_c |Z_G(e_c)^F|
\tr\phi(c)\tr\phi'(c) |c|^2.
\end{equation}

We can regard $\wti\Lambda$ as a matrix in $q$, and denote it by
$\wti\Lambda(q)$. The relation between $\wti\Lambda(q)$ and
$\Lambda(q)$ is given by Lusztig \cite[(24.2.7)]{L5} and
Shoji \cite[section 4]{Sh}. Since we need to account here for our
normalization of the matrix $K(q)$ in section \ref{sec:2.1}, the
relation is
\begin{equation}
\Lambda(q)=q^{\dim Z_{G}(e)} \wti\Lambda(q^{-1}).
\end{equation}

\subsection{} 
Let $M(q)$ be the symmetric matrix whose entries are $\langle
X_q(e,\phi),X_q(e',\phi')\rangle_W^q.$
We relate first $M(q)$ and $\Lambda(q)$.

\begin{theorem}\label{t:2.1}
$$\Lambda(q) M(q)=p(q)\id.$$
\end{theorem}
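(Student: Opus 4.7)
The plan is to reduce everything to the Lusztig--Shoji identity (\ref{e:LS}) and Chevalley's identity (\ref{e:Xqprod}) by a clean matrix manipulation. The main steps are as follows.

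First, I would expand the columns of $M(q)$ in the Springer basis. Since the $(e,\phi),(e',\phi')$ entry of $K(q)$ records the graded multiplicity of $\sigma(e,\phi)$ in $X_q(e',\phi')$, one has
\[
X_q(e',\phi')=\sum_{(e,\phi)}K(q)_{(e,\phi),(e',\phi')}\,\sigma(e,\phi).
\]
Substituting this into both slots of $\langle X_q(e,\phi),X_q(e',\phi')\rangle^q_W=\langle X_q(e,\phi)\otimes\wedge^{-q}V, X_q(e',\phi')\rangle_W$ yields a factorisation
\[
M(q)=K(q)^t\,N(q)\,K(q),\qquad N(q)_{(d,\psi),(d',\psi')}=\langle\sigma(d,\psi)\otimes\wedge^{-q}V,\sigma(d',\psi')\rangle_W.
\]

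Next, I would invoke the Lusztig--Shoji relation $K(q)\Lambda(q)K(q)^t=\Omega(q)$, which gives $\Lambda(q)^{-1}=K(q)^t\,\Omega(q)^{-1}\,K(q)$. Comparing with the previous display, the theorem $\Lambda(q)M(q)=p(q)\id$ is equivalent to
\[
N(q)=p(q)\,\Omega(q)^{-1},\qquad\text{i.e.}\qquad N(q)\,\Omega(q)=p(q)\,\id.
\]
So everything reduces to verifying this last identity, which now involves only characters of $W$ and the coinvariant representation.

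For the final step, I would compute $(N(q)\Omega(q))_{(d,\psi),(e',\phi')}$ by inserting the definitions and collapsing the sum over $(e,\phi)$. Concretely, using that $\{\sigma(e,\phi)\}$ is an orthonormal basis of $R(W)$ and that all $W$-characters are rational, the sum reassembles into an inner product
\[
\langle\sigma(d,\psi)\otimes\wedge^{-q}V,\,X_q(1)\otimes\sigma(e',\phi')\rangle_W.
\]
Moving $\wedge^{-q}V$ across the pairing via self-duality of $W$-representations and then applying Chevalley's identity $X_q(1)\otimes\wedge^{-q}V=p(q)\triv$ from (\ref{e:Xqprod}) collapses this to $p(q)\langle\sigma(d,\psi),\sigma(e',\phi')\rangle_W=p(q)\,\delta_{(d,\psi),(e',\phi')}$, which is exactly what is needed.

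I do not expect any real obstacle: the content of the proof is entirely bookkeeping, and the one piece of nontrivial input is Chevalley's identity, which is recalled in (\ref{e:Xqprod}). The only point that requires a little care is keeping track of transposes and of the convention that $\sigma(e,\phi)$ has degree $0$ in $X_q(e,\phi)$ (rather than top degree), but this is absorbed into the definition of $K(q)$ and does not affect the argument above.
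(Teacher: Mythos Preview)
Your proof is correct and follows essentially the same route as the paper's. The paper first computes $p(q)\,\Omega(q)^{-1}$ via Chevalley's identity and then shows $p(q)K(q)^t\Omega(q)^{-1}K(q)=M(q)$; your version introduces the matrix $N(q)$, factors $M(q)=K(q)^tN(q)K(q)$, and then verifies $N(q)\Omega(q)=p(q)\,\id$ --- but $N(q)=p(q)\Omega(q)^{-1}$ is exactly the content of the paper's first computation, so the two arguments are the same up to the order in which the pieces are assembled.
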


\begin{proof}
We compute $\Lambda(q)$. We first calculate $\Omega(q)^{-1}.$ Using
(\ref{e:Xqprod}), we have $X_q(1)\otimes \sigma\otimes
\wedge^{-q}V=p(q)\sigma,$ for every irreducible $W$-representation
$\sigma$, and then:
\begin{align*}
X_q(1)\otimes\sigma\otimes\wedge^{-q}V&=\sum_{\sigma'}
\langle\sigma',X_q(1)\otimes\sigma\otimes
\wedge^{-q}V\rangle_W\sigma'\\
&=\sum_{\sigma'}\sum_{\sigma_1,\sigma_2}
\langle\sigma_1,X_q(1)\otimes\sigma\rangle_W
\langle\sigma_2,\wedge^{-q}V\rangle_W
\langle\sigma',\sigma_1\otimes\sigma_2\rangle_W \sigma'\\
&=\sum_{\sigma'}\sum_{\sigma_1,\sigma_2}\langle\sigma_1\otimes\sigma^*,X_q(1)\rangle_W\langle\sigma_2,\wedge^{-q}V\rangle_W \langle\sigma',\sigma_1\otimes\sigma_2\rangle_W\sigma'.
\end{align*}
Thus
\begin{equation*}
\sum_{\sigma_1,\sigma_2}
\langle\sigma_1\otimes\sigma^*,X_q(1)\rangle_W\langle\sigma_2,\wedge^{-q}V\rangle_W
\langle\sigma',\sigma_1\otimes\sigma_2\rangle_W=\delta_{\sigma,\sigma'} p(q),
\end{equation*}
and therefore
\begin{equation}\label{e:om-inv}
\Omega(q)^{-1}_{\sigma_1,\sigma_2}=p(q)\sum_{\sigma_3}\langle\sigma_3,\wedge^{-q}V\rangle_W\langle\sigma_2,\sigma_1\otimes\sigma_3\rangle_W.
\end{equation}
(In the calculations above, $\sigma$, $\sigma_i$, $i=1,3$, vary over
$\widehat W$.)

Next, we compute $p(q) K(q)^t\Omega(q)^{-1} K(q).$ The $(e, \phi),$
$(e', \phi')$ entry of this matrix equals:
\begin{align*}
&p(q)\sum_{\sigma_1,\sigma_2}
K(q)^t_{(e, \phi),\Psi^{-1}(\sigma_1)}\Omega(q)^{-1}_{\sigma_1,\sigma_2}
K(q)_{\Psi^{-1}(\sigma_2),(e', \phi')}\\
&=p(q) K(q)_{\Psi^{-1}(\sigma_1),(e, \phi)}\Omega(q)^{-1}_{\sigma_1,\sigma_2} K(q)_{\Psi^{-1}(\sigma_2),(e', \phi')}\\
&=\sum_{\sigma_1,\sigma_2,\sigma_3}
\langle\sigma_1,X_q(e,\phi)\rangle_W
\langle\sigma_3,\wedge^{-q}V\rangle_W
\langle\sigma_2,\sigma_1\otimes\sigma_3\rangle_W\langle\sigma_2,X_q(e',\phi')\rangle_W\\
&=\sum_{\sigma_2}\left(\sum_{\sigma_1,\sigma_3}\langle\sigma_1,X_q(e,\phi)\rangle_W\langle\sigma_3,\wedge^{-q}V\rangle_W\langle\sigma_2,\sigma_1\otimes\sigma_3\rangle_W\right)
\langle\sigma_2,X_q(e',\phi')\rangle_W\\
&=\sum_{\sigma_2}\langle\sigma_2,X_q(e,\phi)\otimes\wedge^{-q}V\rangle_W\langle\sigma_2,X_q(e',\phi')\rangle_W\\
&=\langle X_q(e,\phi)\otimes
\wedge^{-q}V,X_q(e',\phi')\rangle_W=\langle X_q(e,\phi),X_q(e',\phi')\rangle_W^q.
\end{align*}
In other words, $p(q) K(q)^t\Omega(q)^{-1} K(q)=M(q)$, and the
conclusion follows from (\ref{e:LS}).
\end{proof}

\begin{corollary}\label{c:2.2}
If $e\neq e'$, i.e., they are representatives of distinct
$G$-orbits in $\C N$, then
$$\langle X_q(e,\phi),X_q(e',\phi')\rangle_W^q=0,$$
for all $\phi,\phi'.$ Therefore, the subspaces $R_q(W)^e$ and $R_q(W)^{e'}$ are orthogonal with respect to $\langle~,~\rangle_W^q.$
\end{corollary}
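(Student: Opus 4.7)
The plan is to deduce the corollary directly from Theorem \ref{t:2.1} by exploiting the block-diagonal structure of the matrix $\Lambda(q)$. Recall from the setup that $\wti\Lambda$ has $(e,\phi),(e',\phi')$ entry equal to $(f^e_\phi, f^{e'}_{\phi'}) = \delta_{e,e'} \sum_c |\CO^F_e(c)| \tr\phi(c)\tr\phi'(c)$, which vanishes whenever $e, e'$ are representatives of distinct $G$-orbits. Consequently $\wti\Lambda(q)$, and hence $\Lambda(q) = q^{\dim Z_G(e)} \wti\Lambda(q^{-1})$, is block-diagonal with one block $\Lambda_e(q)$ of size $|\widehat{A(e)}_0|$ for each $G$-orbit of $e \in \C N$.

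First I would observe that $p(q) = \prod_i(1-q^{m_i})$ is a nonzero element of $\bZ[q]$ (hence a unit in $\bQ(q)$), so the identity $\Lambda(q) M(q) = p(q) \id$ from Theorem \ref{t:2.1} forces $\Lambda(q)$ to be invertible over $\bQ(q)$, and then
\begin{equation*}
M(q) = p(q)\, \Lambda(q)^{-1}.
\end{equation*}
Since the inverse of a block-diagonal matrix (over a field) is block-diagonal with the same block partition, we conclude that $M(q)$ is itself block-diagonal with blocks indexed by the $G$-orbits in $\C N$. Reading off the vanishing of the off-diagonal blocks gives $\langle X_q(e,\phi), X_q(e',\phi')\rangle_W^q = 0$ whenever $e, e'$ are representatives of distinct orbits, which is precisely the first claim. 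The orthogonality of the subspaces $R_q(W)^e$ and $R_q(W)^{e'}$ with respect to $\langle~,~\rangle_W^q$ follows by $\bZ[q]$-bilinear extension, since by definition $R_q(W)^e$ is spanned by $\{X_q(e,\phi) : \phi \in \widehat{A(e)}_0\}$.

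I do not anticipate a significant obstacle here: the content is entirely in the block structure of $\Lambda(q)$ (which is built into its definition via the Kronecker delta $\delta_{e,e'}$ in the pairing $(f^e_\phi, f^{e'}_{\phi'})$) combined with the matrix identity of Theorem \ref{t:2.1}. The only small point to be careful about is that the invertibility argument takes place over $\bQ(q)$ rather than $\bZ[q]$, but this suffices since the entries of $M(q)$ already lie in $\bZ[q]$ and the vanishing statement is about $\bZ[q]$-entries.
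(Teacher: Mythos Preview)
Your proposal is correct and follows essentially the same approach as the paper: the paper's proof simply says the result is immediate from Theorem~\ref{t:2.1} since $\Lambda(q)$ is block-diagonal, and you have spelled out exactly that argument (inverting the identity $\Lambda(q)M(q)=p(q)\id$ over $\bQ(q)$ and using that the inverse of a block-diagonal matrix is block-diagonal).
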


\begin{proof}
This is immediate from Theorem \ref{t:2.1}, since $\Lambda(q)$ is block-diagonal.
\end{proof}

\begin{example}
Suppose $G=SL(3)$, so $W=S_3$ acting on a two dimensional reflection space $V$. Since all component group representations of Springer type are trivial, we drop them from notation. With our conventions, we have:
$X_q(3)=(1^3)$, $X_q(21)=(21)+q(1^3)$, and $X_q(1^3)=(3)+q(21)+q^2(21)+q^3(1^3).$ Then we find $$M(q)=\text{diag}(1,1-q,(1-q^2)(1-q^3)).$$
\end{example}

\begin{example}
Suppose $G=Sp(4)$. We have five Green polynomials, with our convention:
$X_q(4)=(0\times 11),$ $X_q((22),\triv)=(1\times 1)+q(0\times 11),$ $X_q((22),\sgn)=(11\times 0),$ $X_q((211))=(0\times 2)+ q(1\times 1)+ q^2(0\times 11)$, and 
$$X_q(1^4)=(2\times 0)+q(1\times 1)+q^2(11\times 0+0\times 2)+q^3(1\times 1)+q^4 (0\times 11).$$
Then we find
$$M(q)=\text{diag}\left(1,\left(\begin{matrix}1&-q\\-q&1\end{matrix}\right),1-q^2,(1-q
^2)(1-q^4)\right).$$
\end{example}

We wish to relate
$R_q(W)^e$ with $R_q(A(e))^0$, i.e., the subspace of $R_q(A(e))$
spanned by $\widehat{A(e)}_0$.

\subsection{}
To compute $M(q)$ further, 
in light of Theorem
\ref{t:2.1}, we need to consider $\Lambda(q)^{-1}=q^{\dim Z_{G}(e)}
(\wti\Lambda^{-1})(q^{-1}).$ Notice that
\begin{equation}
|G^F|(q^{-1})=\prod_{i}(1-q^{m_i})=p(q),
\end{equation}
and using (\ref{e:lambdatilde-inv}), we see that the
$(e, \phi),(e', \phi')$ entry in $\Lambda(q)^{-1}$ equals
\begin{equation}
\delta_{e,e'}\frac 1{|A(e)|^2p(q)}\sum_{c} \tr\phi(c)\tr\phi'(c)|c|^2 q^{\dim Z_{G}(e)} |Z_G(e_c)^F|(q^{-1}).
\end{equation}

The map $g \mapsto g_c g g_c \i$ gives an isomorphism from $Z_G(e)$ to $Z_G(e_c)$. Hence 
\begin{align*}
Z_G(e_c)^F &=\{g \in Z_G(e_c); F(g)=g\} \cong \{g \in Z_G(e); F(g_c g g_c \i)=g_c g g_c \i\} \\ &=\{g \in Z_G(e); F_c(g)=g\}=Z_G(e)^{F_c}. 
\end{align*}
Here $F_c=\Ad(x_c) \circ F$ is a Frobenius morphism on $Z_G(e)$. It is easy to see that the image of $Z_G(e)^{F_c}$ under the map $Z_G(e) \to A(e)$ is $Z_{A(e)}(\bar x_c)$. Hence we have the following short exact sequence
\begin{equation}\label{surj}
\xymatrix{1 \ar[r] & (Z^0_G(e))^{F_c} \ar[r] & Z_G(e)^{F_c} \ar[r] &Z_{A(e)}(\bar x_c) \ar[r] & 1}.
\end{equation}

Let $R_e$ be the unipotent radical of $Z_G(e)$ and $H_e=Z^0_G(e)/R_e$. Then $H_e$ is a connected reductive group and $$H_e^{F_c}=(Z^0_G(e))^{F_c}/R_e^{F_c}.$$

So \begin{align*} |Z_G(e_c)^F| &=|Z_G(e_c)^{F_c}|=|Z_{A(e)}(\bar x_c)| \cdot |(Z^0_G(e))^{F_c}|\\ &=\frac {|A(e)|}{|c|} |(Z^0_G(e))^{F_c}|=q^{\dim(R_e)}\frac {|A(e)|}{|c|} |H_e^{F_c}| .\end{align*}

\begin{corollary}\label{c:2.4}
The $(e, \phi),(e', \phi')$ entry in the matrix $M(q)$ is given by 
\begin{equation*}
\delta_{e,e'} \frac 1{|A(e)|}\sum_{c} \tr\phi(c)\tr\phi'(c) |c| \zeta_{e,c}(q),
\end{equation*}
with $\zeta_{e,c}(q)=q^{\dim H_e}|H_e^{F_c}|(q \i)$.
\end{corollary}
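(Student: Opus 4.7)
The plan is to combine Theorem \ref{t:2.1}, which gives $M(q)=p(q)\Lambda(q)^{-1}$, with the explicit entry-wise computation of $\Lambda(q)^{-1}$ laid out in the discussion preceding the corollary. This reduces the task to bookkeeping on powers of $q$ and cardinalities.

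First I would write the $(e,\phi),(e',\phi')$ entry of $\Lambda(q)^{-1}$ using the relation $\Lambda(q)=q^{\dim Z_G(e)}\wti\Lambda(q^{-1})$ (understood block-by-block in $e$) together with the explicit formula (\ref{e:lambdatilde-inv}) for $\wti\Lambda^{-1}$ evaluated at $q^{-1}$; invoking $|G^F|(q^{-1})=p(q)$, this produces a sum over conjugacy classes $c$ of $A(e)$ involving the factor $q^{\dim Z_G(e)}|Z_G(e_c)^F|(q^{-1})$.

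Next I would apply the short exact sequence (\ref{surj}), which yields $|Z_G(e_c)^F|=|Z_{A(e)}(\bar x_c)|\cdot|(Z_G^0(e))^{F_c}|=\tfrac{|A(e)|}{|c|}q^{\dim R_e}|H_e^{F_c}|$; the pure power $q^{\dim R_e}$ appears because $R_e$ is a unipotent group, so $|R_e^{F_c}|$ is a monomial in $q$. Treating this as a polynomial identity in $q$, substituting $q\mapsto q^{-1}$, and using $\dim Z_G(e)=\dim R_e+\dim H_e$, the factor $q^{\dim Z_G(e)}|Z_G(e_c)^F|(q^{-1})$ collapses to $\tfrac{|A(e)|}{|c|}\,q^{\dim H_e}|H_e^{F_c}|(q^{-1})=\tfrac{|A(e)|}{|c|}\zeta_{e,c}(q)$.

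Plugging this back into the entry of $\Lambda(q)^{-1}$ and multiplying by $p(q)$ as prescribed by Theorem \ref{t:2.1}, the $p(q)$ factors cancel, one power each of $|A(e)|$ and $|c|$ cancels, and the stated formula emerges. There is no serious obstacle, since essentially all ingredients are assembled in the paragraphs preceding the corollary; the one point requiring care is tracking the twisted Frobenius $F_c=\Ad(x_c)\circ F$ through the isomorphism $Z_G(e_c)^F\cong Z_G(e)^{F_c}$ induced by conjugation by $g_c$, so as to legitimately identify the resulting count with $|H_e^{F_c}|$ and hence with $\zeta_{e,c}(q)$.
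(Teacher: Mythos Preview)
Your proposal is correct and follows essentially the same route as the paper: both combine $M(q)=p(q)\Lambda(q)^{-1}$ from Theorem~\ref{t:2.1} with the explicit entry of $\Lambda(q)^{-1}$ obtained from $\wti\Lambda^{-1}$ and the identity $|G^F|(q^{-1})=p(q)$, then simplify $q^{\dim Z_G(e)}|Z_G(e_c)^F|(q^{-1})$ via the short exact sequence (\ref{surj}) and $\dim Z_G(e)=\dim R_e+\dim H_e$. The cancellations of $p(q)$, one factor of $|A(e)|$, and one factor of $|c|$ that you describe match the paper's computation exactly.
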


\subsection{} Now we follow the approach in \cite[section 2.9]{Ca}. 

We fix a conjugacy class $c$ of $A(e)$. Let $T$ be an $F_c$-stable maximal torus of $H_e$ contained in a $F_c$-stable Borel subgroup. Then we can define the $F_c$-action on the character group $X$ of $T$ and on $V_e=X_{\bR}$. We have that $F_c=q F_{c, 0}$ on $V$ where $F_{c, 0}$ is an automorphism of finite order. 

The group $H_e$ is a product $H_e=H' Z^0$, where $H'$ is a semisimple group and $Z^0$ is the central torus. Then $T=S Z^0$, where $S=T \cap H'$ is a maximal torus of $H'$ and $S \cap Z^0$ is finite. We have the decomposition $V_e=V_1 \oplus V_2$, where $V_1=(Z^0)_{\bR}^\perp$ is the subspace spanned by the roots and $V_2=S_{\bR}^\perp$ is a complementary subspace. Both $V_1$ and $V_2$ are stable under the action of $F_c$ and $F_{c, 0}$. Moreover, let $V_Z$ be the vector space spanned by the characters of $Z^0$. Then $V_2 \cong V_Z$ as $F_c$-vector space and 
$$|(Z^0)^{F_c}|=\text{det}_{V_2}(q-F_{c, 0})=\text{det}_{V_Z}(q-F_{c, 0}).$$ 

Let $W_e$ be the Weyl group of $H_e$. The $W_e$-invariants of the algebra of polynomial functions on $V_1$ is a polynomial ring and there exists homogeneous elements $I_1, \cdots, I_l$ of degree $d_1, \cdots, d_l$ such that $$\mathbb C[V_1]^{W_e}=\mathbb C[I_1, \cdots, I_l]$$ and $F_{c, 0}(I_i)=\ep_{c, i} I_i$, where $\e_{c, i}$ is a root of unity. Moreover, $$|H_e^{F_c}|=|Z^{F_c}| q^N \Pi_i (q^{d_i}-\e_{c, i})=q^n \text{det}_{V_Z}(q-F_{c, 0}) \Pi_i (q^{d_i}-\e_{c, i}),$$ where $N$ is the number of positive roots. 

We may reformulate the order of $H_e^{F_c}$ in the following way. 

For any $d \in \mathbb N$, let $M[d]_c$ be the complex vector space spanned by $I_i$ with $d_i=d$. Then $F_{c, 0}$ acts on $M[d]_c$ and $$|H_e^{F_c}|=q^N \text{det}_{V_Z}(q-F_{c, 0})\Pi_d \, \text{det}_{M(d)_c}(q^d-F_{c, 0}).$$

Notice that $N$ and $V_2$ are independent of the choice of $c$. Although $T$ and $I_i$ depends on the choice of $c$, the multiset $\{d_1, \cdots, d_l\}$ is the set of degree of fundamental invariants for $H_e$ and thus for any given $d$, the dimension of the vector spaces $M(d)_c$ is independent of the choice of $c$.  

\subsection{} By (\ref{surj}), each coset of $A(e)$ contains a $F$-stable element. Moreover, if $g, g' \in Z_G(e)^F$ with $g Z_G(e)^0=g' Z_G(e)^0$, then the actions of $\Ad(g) \circ F$ and $\Ad(g') \circ F$ on $V_Z$ are the same as $Z^0$ is the central torus of $H_e$. In other words, the map $$Z_G(e)^F \to \End(V_Z), \quad g \mapsto \Ad(g) \circ F \mid_{V_Z}$$ factors through a map $A(e) \to \End(V_Z)$. For any $x \in A(e)$, we denote the corresponding endomorphism on $V_Z$ by $F_x$. Then $F_x=q F_{x, 0}$, where $F_{x, 0}$ has finite order. 

For any $g, g' \in Z_G(e)^F$, $(\Ad(g) \circ F) \circ (\Ad(g') \circ F)=\Ad(g g') \circ F^2$. Notice that $F=F_1$ acts on $V_Z$ as $q \, \id$. Thus the map $x \mapsto F_{x, 0}$ gives a group homomorphism from $A(e) \to GL(V_Z)$. 

\begin{proposition}\label{p:2.6}
Let $d \in \mathbb N$. Then there exists a representation $M(d)$ of $A(e)$ such that for any $x \in A(e)$, we have $$ \text{det}_{M(d)}(\l-x)= \text{det}_{M(d)_{x_c}}(\l-F_{x_c, 0})$$ as a polynomial on $\l$. Here $x_c$ is the conjugacy class of $x$. 
\end{proposition}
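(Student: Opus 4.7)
I will realize $M(d)$ as the degree-$d$ piece of the primitive $W_e$-invariants in $\bC[V_1]$, equipped with the canonical outer-automorphism action of $A(e)$. Conjugation in $Z_G(e)$ descends to an action on $H_e = Z_G(e)^0/R_e$, and since $Z_G(e)^0$ acts by inner automorphisms, it induces a homomorphism $A(e)\to\mathrm{Out}(H_e)$. An outer automorphism of $H_e$ acts on $X(T)_\bR$ only up to $W_e$, but acts canonically on the graded ring $\bC[V_1]^{W_e}$. Setting
\[
M(d) := \bigl(\bC[V_1]^{W_e}_+\big/(\bC[V_1]^{W_e}_+)^2\bigr)_d,
\]
the degree-$d$ piece of indecomposable invariants, yields a graded $A(e)$-representation whose dimension equals the number of fundamental $W_e$-invariants of degree $d$, matching $\dim M(d)_c$ for every conjugacy class $c$.

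For the characteristic-polynomial identity, fix $x\in A(e)$ with conjugacy class $c$ and use the representative $x_c\in Z_G(e)^F$ from the previous subsection, so that $\bar x_c=x$. Choose an $F$-split maximal torus $T_0$ of $H_e$ and an element $h\in H_e$ with $T:=hT_0h^{-1}$ being $F_c$-stable. After transferring along $\mathrm{Ad}(h)$, the operator $F_c=\mathrm{Ad}(x_c)\circ F$ on $X(T)_\bR$ is identified with $\mathrm{Ad}(y)\circ F$ on $X(T_0)_\bR$, where $y:=h^{-1}x_cF(h)\in Z_G(e)$ still has image $x$ in $A(e)$, since $h,F(h)\in H_e$. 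Since $H_e$ is $F$-split, $F$ acts on $X(T_0)_\bR$ as $q\cdot\id$; writing $\mathrm{Ad}(y)|_{T_0}=\mathrm{Inn}(n)\circ\phi_x$ for some $n\in N_{H_e}(T_0)$ and a chosen representative $\phi_x\in\mathrm{Aut}(H_e)$ of $x\in\mathrm{Out}(H_e)$ preserving $T_0$, one obtains
\[
F_c = q\cdot(w\cdot\phi_x^*) \quad \text{on } X(T_0)_\bR,
\]
where $w\in W_e$ is the image of $n$. Consequently $F_{c,0}=w\cdot\phi_x^*$.

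On the subring of $W_e$-invariants, the Weyl factor $w$ acts trivially, so $F_{c,0}$ acts on $\bC[V_1]^{W_e}$ exactly as $\phi_x^*$, which is by construction the canonical $A(e)$-action of $x$ introduced in step one. The two operators therefore share the same characteristic polynomial on each graded component of $S:=\bC[V_1]^{W_e}$, in particular on $M(d)=(S_+/S_+^2)_d$. To compare with $M(d)_c$, choose the polynomial generators $I_i$ of $S$ to be $F_{c,0}$-eigenvectors (possible since $F_{c,0}$ has finite order); then $M(d)_c=\mathrm{span}\{I_i:d_i=d\}\subset S_d$ is $F_{c,0}$-stable and projects isomorphically and $F_{c,0}$-equivariantly onto $(S_+/S_+^2)_d$. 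This gives
\[
\det\nolimits_{M(d)}(\lambda-x)=\det\nolimits_{M(d)_c}(\lambda-F_{c,0}),
\]
as required.

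The main obstacle is the transfer calculation that decomposes $F_{c,0}$ into a Weyl-group part and an outer-automorphism part. The whole argument hinges on the Weyl piece acting trivially on $W_e$-invariants, so that the characteristic polynomial of $F_{c,0}$ on those invariants is controlled solely by the outer-automorphism class $x\in A(e)$. The auxiliary splitness of $H_e$ (valid for $G$ split and $\mathrm{char}(F)$ large) is used only to ensure that $F$ contributes the scalar $q$ without additional twist; otherwise one would have to absorb an extra diagram-automorphism from the $\bF$-structure into the canonical $A(e)$-action.
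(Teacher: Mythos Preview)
Your argument is correct and takes a genuinely different route from the paper. The paper proves the proposition by a case-by-case verification: for classical $G$ it writes down $M(d)$ explicitly from the product decomposition $H_e=\prod Sp_{r_i}\times\prod O_{r_i}$ (or its orthogonal analogue) and invokes Liebeck--Seitz for the action of each $\bZ/2\bZ$-factor of $A(e)$; for exceptional $G$ with $A(e)=S_2$ it argues directly that $F_{\ep,0}^2=\id$, and for the handful of exceptional cases with $A(e)=S_3$ it simply lists $H_e$ and $M(d)$. Your construction is uniform: you realise $M(d)$ as the degree-$d$ indecomposable $W_e$-invariants $(S_+/S_+^2)_d$ with the canonical action coming from $A(e)\to\mathrm{Out}(H_e)$, and then show by the transfer computation that $F_{c,0}$ on invariants is exactly the outer-automorphism action of $x$, the Weyl piece dropping out. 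This explains \emph{why} $M(d)$ exists rather than exhibiting it orbit by orbit, and one checks immediately that it reproduces the paper's tables (e.g.\ for $D_4(a_1)$ in $E_8$ the triality $S_3$ fixes the degree-$2$ and degree-$6$ invariants of $D_4$ and acts by the reflection representation on the two degree-$4$ invariants, matching the paper's $M(2),M(4),M(6)$).

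One point deserves a sharper justification. You assert that $H_e$ is $F$-split, so that $F$ acts as $q\cdot\id$ on $X(T_0)_\bR$. This is not automatic for an arbitrary $F$-fixed $e$ in a given nilpotent orbit: different $G^F$-orbits inside $\CO_e^F$ can give non-isomorphic $\bF_q$-forms of $H_e$. What is true is that one may always choose the representative $e$ so that $H_e$ is split (for instance by taking $e$ in the split $G^F$-orbit), and indeed the proposition as stated forces this: applying it with $x=1$ gives $(\lambda-1)^{\dim M(d)}=\det_{M(d)_1}(\lambda-F_{1,0})$, so $F_{1,0}=\id$ on the invariants is a hidden hypothesis in the paper as well. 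You should make this choice of $e$ explicit rather than claim splitness follows from $G$ split and large characteristic alone.
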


\begin{proof}
We follow the notations in \cite{LS}. 

We first consider the case where $G$ is a classical group. 

If $G=GL_n$, then $A(e)=1$ and the statement is obvious. 

If $G=Sp_n$ and $e=\oplus_i J_i^{r_i}$ be a nilpotent element in $G$, where $J_i$ denotes a nilpotent Jordan block of length $i$. By \cite[Theorem 3]{LS}, $$H_e=\Pi_{i \text{ odd}} Sp_{r_i} \times \Pi_{i \text{ even}} O_{r_i}$$ and $A(e)=(\bZ_2)^k$, where $k=\sharp\{i; i \text{ even}, r_i>0\}$.  

For any $d \in \mathbb N$, let $M(d)_{\text {odd}}=\oplus_{i \text{ odd}} M(d)^{Sp_{r_i}}$ and $M(d)_i=M(d)^{O_{r_i}}$ for $i$ even. Here for any $H=Sp_{r_i}$ or $O_{r_i}$, $M(d)^H$ is a complex vector space of dimension equal to $\dim M(d)_1$ for the group $H$, i.e., the number of the degree of the fundamental invariants for $H$ which equals $d$. 

Let $$M(d)=M(d)_{\text {odd}} \oplus \oplus_{i \text{ even}} M(d)_i.$$ We define the action of $A(e)$ on $M(d)$ as follows. 

The action of $A(e)$ on $M(d)_{\text{odd}}$ is trivial. For any $i$ even with $r_i>0$, the $i$-th copy of $\bZ_2$ in $A(e)$ acts on acts trivially on $M(d)_{i'}$ unless for $i'=i$, $d=\frac{r_i}{2}$. In the latter case, if $4 \mid r_i$, then $M(d)_i$ is $2$-dimensional and the $i$-th copy of $\bZ_2$ in $A(e)$ acts on $M(d)_i$ as permutation representation; if $4 \nmid r_i$, then $M(d)_i$ is $1$-dimensional and the $i$-th copy of $\bZ_2$ in $A(e)$ acts on $M(d)_i$ as sign representation.

By \cite[Theorem 2.12]{LS}, this is the desired representation. 

The case where $G=O_n$ can be proved in the same way. 

Now we assume that $G$ is of exceptional type and the semisimple part of $H_e$ is nontrivial. 

If $A(e)=S_2=\{1, \e\}$, then $F_\e^2=\Ad(g) \circ F^2$ for some $g \in (Z_G(e)^0)^F$. By Lang's theorem, there exists $h \in Z_G(e)^0$ such that $g=x F^2(x) \i$. So $F_\e^2=\Ad(x) \circ F^2 \circ \Ad(x) \i$. Since $F^2$ acts on $M(d)_1$ as $q^2 \, \id$, $F_\e^2$ acts on $M(d)_\e$ as $q^2 \, \id$. In particular, $F_{\e, 0}$ is an automorphism on $M(d)_\e$ with $F_{\e, 0}^2=\id$. The statement holds in this case. 

If $A(e) \neq \{1\}$ or $S_2$, we only have the following cases (see \cite[Table 5.1 \& 5.2]{LS}). 

Class $D_4(a_1)$ in $E_8(q)$. Here $A(e)=S_3$, $H_e=D_4$, $M(2)$, $M(6)$ are one-dimensional trivial representations of $A(e)$ and $M(4)$ is the irreducible 2-dimensional representation of $A(e)$. 

Class $D_4(a_1)  A_1$ in $E_8(q)$. Here $A(e)=S_3$, $H_e=A_1^3$ and $M(2)$ is the permutation representation of $A(e)$. 

Class $E_7(a_5)$ in $E_8(q)$. Here $A(e)=S_3$, $H_e=A_1$ and $M(2)$ is one-dimensional trivial representation of $A(e)$. 

Class $D_4(a_1)$ in $E_7(q)$. Here $A(e)=S_3$, $H_e=A_1^3$ and $M(2)$ is the permutation representation of $A(e)$.
\end{proof}

\subsection{} We set 
\begin{equation}\label{e:define-M}
M=\wedge^{-q} V_Z \otimes (\otimes_d (\wedge^{-q^d} M(d))).
\end{equation}
 Then the action of $A(e)$ on $V_Z$ and $M(d)$ extends in a unique way to an action on $M$ and for any $x \in A(e)$,
\begin{align*}
\tr_M(x) &=\tr_{\wedge^{-q} V_Z}(x) \times \Pi_d \, tr_{\wedge^{-q^d} M(d)}(x) \\ &=\text{det}_{V_Z}(1-qx) \times \Pi_d \, \text{det}_{M(d)}(1-q^d x) \\ &=\text{det}_{V_Z}(1-qx) \times \Pi_d \, \text{det}_{M(d)_{x_c}}(1-q^d F_{x_c}) \\ &=q^{\dim H_e-N} \text{det}_{V_Z}(q \i-x) \times \Pi_d \, \text{det}_{M(d)_{x_c}}(q^{-d}-F_{x_c, 0}) \\ &=q^{\dim H_e}|H_e^{F_{x_c}}|(q \i)
\end{align*}

We define the $(q,M)$-pairing in $R_q(A(e))$ to be 
\begin{equation}
\langle\phi,\phi'\rangle_{A(e)}^{q,M}:=\langle\phi\otimes M,\phi'\rangle_{A(e)}\in \bZ[q].
\end{equation}

Thus we have proved:

\begin{theorem}\label{t:2.7}
The map $X_q(e,\phi)\to \phi$ induces a $\bZ[q]$-isomorphic isometry with respect to the
$q$-elliptic pairing in $R_q(W)^e$ and the $(q,M)$-pairing in $R_q(A(e))^0$. More precisely:
$$\langle X_q(e,\phi), X_q(e,\phi')\rangle_W^q= \langle \phi,\phi'\rangle_{A(e)}^{q,M}.$$
\end{theorem}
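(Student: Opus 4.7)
The plan is to identify both sides of the claimed identity with the diagonal entry $M(q)_{(e,\phi),(e,\phi')}$ already computed in Corollary~\ref{c:2.4}. The isomorphism portion is automatic, since $\{X_q(e,\phi)\}_{\phi\in\widehat{A(e)}_0}$ and $\{\phi\}_{\phi\in\widehat{A(e)}_0}$ are by definition $\bZ[q]$-bases of $R_q(W)^e$ and $R_q(A(e))^0$ respectively, so the proposed map is a $\bZ[q]$-linear isomorphism of free modules of the same rank. All of the substance lies in the isometry assertion.

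For the left-hand side, $\langle X_q(e,\phi), X_q(e,\phi')\rangle_W^q$ is by construction the $(e,\phi),(e,\phi')$ entry of the matrix $M(q)$, and Corollary~\ref{c:2.4} evaluates it to
$$\frac{1}{|A(e)|}\sum_{c} |c|\, \tr\phi(c)\, \tr\phi'(c)\, \zeta_{e,c}(q),$$
the sum running over conjugacy classes $c$ of $A(e)$. For the right-hand side, I would expand the character pairing as a sum over conjugacy classes of $A(e)$:
$$\langle \phi, \phi'\rangle_{A(e)}^{q,M} = \langle\phi\otimes M, \phi'\rangle_{A(e)} = \frac{1}{|A(e)|}\sum_c |c|\, \tr\phi(c)\, \tr_M(x_c)\, \tr\phi'(c),$$
using that the Springer-type characters of $A(e)$ may be taken real-valued, so no complex conjugation appears. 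Comparing the two expressions, the isometry reduces to the single pointwise identity $\tr_M(x_c) = \zeta_{e,c}(q)$ for every conjugacy class $c$ of $A(e)$.

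This last identity is the content of the character computation carried out just before the theorem: starting from $M = \wedge^{-q}V_Z \otimes \bigotimes_d \wedge^{-q^d}M(d)$ as in \eqref{e:define-M}, one expands $\tr_M(x)$ multiplicatively as $\text{det}_{V_Z}(1-qx)\prod_d \text{det}_{M(d)}(1-q^d x)$, then uses Proposition~\ref{p:2.6} to replace each factor $\text{det}_{M(d)}(\lambda-x)$ by $\text{det}_{M(d)_{x_c}}(\lambda-F_{x_c,0})$, and finally re-groups the resulting product to recognise $q^{\dim H_e}|H_e^{F_{x_c}}|(q^{-1}) = \zeta_{e,c}(q)$.

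The genuine obstacle in this approach therefore lies not in Theorem~\ref{t:2.7} itself but in its prerequisite, Proposition~\ref{p:2.6}: one must exhibit, intrinsically and independently of $F$, an $A(e)$-representation $M(d)$ whose characteristic polynomial at each $x\in A(e)$ encodes the Frobenius-eigenvalue data on the degree-$d$ fundamental invariants of $H_e$ under the twist by a representative of the class of $x$. This is handled by a case-by-case inspection of $(H_e, A(e))$ using the Lusztig--Spaltenstein classification. Granting Proposition~\ref{p:2.6}, Theorem~\ref{t:2.7} follows by unwinding definitions, with the two characters matching conjugacy class by conjugacy class.
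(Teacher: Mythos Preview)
Your proposal is correct and follows exactly the paper's own argument: the theorem is stated in the paper immediately after the computation showing $\tr_M(x)=\zeta_{e,c}(q)$, with the words ``Thus we have proved,'' and your plan simply makes explicit the comparison with Corollary~\ref{c:2.4} that this phrase summarizes. Your observation that the Springer-type $A(e)$-characters are real-valued (needed so that the bilinear expansion of $\langle\phi\otimes M,\phi'\rangle_{A(e)}$ matches the formula in Corollary~\ref{c:2.4}) is a point the paper leaves implicit.
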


\section{Nilpotent elements with solvable connected centralizer}\label{sec:sol}

\subsection{} In this section, we discuss certain nilpotent conjugacy classes that will play an essential role in our study of irreducible representations of $\widetilde{W}$. 

Let $e \in \C N$. A standard (Lie) triple of $e$ is a a triple $\{e,h,f\}\subset\fg$, such that $[h,e]=2e$, $[h,f]=-2e$, and $[e,f]=h.$ Every such triple corresponds to a Lie algebra homomorphism $\varphi: sl(2)\to \fg,$ $\left(\begin{matrix}0&1\\0&0\end{matrix}\right)\mapsto e,$ $\left(\begin{matrix}1&0\\0&-1\end{matrix}\right)\mapsto h,$ $\left(\begin{matrix}0&0\\1&0\end{matrix}\right)\mapsto f$.  By the Dynkin-Kostant classification, see for example \cite[pages 35--36]{CM}, the map $\{e,h,f\}\to e$ gives a one-to-one correspondence between the set of $G$-conjugacy classes of Lie triples and $G$-orbits of nilpotent elements in $\fg$. We refer to the element $h$ as a neutral element for $e$ and when we wish to emphasize the dependence of $h$ on $e$, we denote it by $h_e$. %Moreover, whenever needed, we choose the Lie triple such that $h_e\in V$, as we may. 

\begin{definition}
An element $e \in \C N$ is called distinguished (\cite{Ca}) is the
centralizer $Z_{\fg}(e)$ does not contain any nonzero semisimple element.

An element $e \in \C N$ is called quasidistinguished if there exists
a semisimple element $t\in Z_G(e)$ such that $Z_G(t)$ is semisimple and $e$ is a distinguished element in the Lie algebra of $Z_G(t)$. %the centralizer $Z_G(tu)$ does not contain any nontrivial torus. 
\end{definition}

Set $u=\exp(e)$. Then $e$ is quasidistinguished if and only if $u$ is quasidistinguished in the sense of  \cite{Re}, i.e.,  there exists a semisimple element $t\in G$ such that $u \in Z_G(t)$ and $Z_G(t u)$ does not contain any nontrivial torus. Indeed, one direction is obvious. For the converse, let $g=u t$. By Jordan decomposition, any element that commutes with $g$ also commutes with $t$ and $u$. Thus $Z_G(g)=Z_{Z_G(t)}(u)$. Hence $Z_G(g)$ doesn't contain a nontrivial torus implies that $Z_G(t)$ is semisimple and $u$ is distinguished in $Z_G(t)$. Hence $e$ is distinguished in the Lie algebra of $Z_G(t)$.  

\smallskip

Recall that $\C N^\sol$ is the set of $e \in \C N$ such that $Z_G(e)^0$
is a solvable group. It is clear that every distinguished nilpotent element $e$ is also
quasidistinguished and belongs to $\C N^\sol$. It is proved in
\cite[Lemma 7.1(1)]{Re2} that if $e$ is quasidistinguished, then necessarily $e\in
\C N^\sol$. However, $e \in \C N^\sol$ does not imply that $e$ is quasidistinguished.

\subsection{}\label{sec: 4.1} Let $\delta$ be the automorphism of $G$ given by the action of $-w_0$ on the root datum, where $w_0\in W$ is the longest element. More precisely, $\delta$ is the order two automorphism of the Dynkin diagram when $G$ is of type $A_n$, $D_{2n+1}$, or $E_6$, and $\delta$ is trivial for other simple groups. 

We set $G_\#=G\rtimes\langle\delta\rangle$. Following \cite[section 9]{St1}, we call an element $g \in G_\#$ quasi-semisimple if there exists a Borel subgroup $B$ of $G$ and a maximal torus $T \subset B$ such that $g B g \i=B$ and $g T g \i=T$. In this case $Z_G(g)$ is a reductive group. Moreover, by \cite[Theorem 7.5]{St1}, if $g\in G_\#$ is semisimple, then $g$ is quasi-semisimple.

\begin{definition}\label{d:twisted-quasi}
%If $x\in G$, the $\delta$-twisted centralizer of $x$ in $G$ is $Z_{G,\delta}(x)=\{g\in G: gx\delta(g)^{-1}=x\}.$ Clearly, $Z_{G,\delta}(x)=Z_G(x\delta).$

An element $e \in \C N$ is called $\delta$-quasidistinguished if 
there exists a semisimple element $t \d \in Z_{G_\#}(e)$ such that $Z_G(t \d)$ is semisimple and $e$ is a distinguished element in the Lie algebra of $Z_G(t \d)$. %and $Z_G(ut\delta)$ does not contain any nontrivial torus. 
\end{definition}

%Definition \ref{d:twisted-quasi} is equivalent with the statement that $Z_G(t\delta)$ is semisimple and $u\in Z_G(t\delta)$ is a distinguished nilpotent element. Indeed, one direction is obvious. For the converse, suppose $g=u t \delta.$ Then $g_e=u$ and $g_s=t \delta$. By Jordan decomposition, any element that commutes with $g$ also commutes with $g_e$ and $g_s$. Thus $Z_G(g)=Z_{Z_G(t \delta)}(e)$. Hence $Z_G(g)$ doesn't contain a nontrivial torus implies that $Z_G(t \delta)$ is semisimple and $u$ is distinguished in $Z_G(t \delta).$ 

\smallskip

Suppose that $t\delta$ is semisimple in $G_\#$.
The condition that $Z_G(t\delta)$ be semisimple implies that $t\delta$ is an isolated (torsion) element of $G\delta$ in the terminology of \cite[section 2]{L-discon} or \cite[section 3.8]{Re3}. For basic results about the isolated elements, see \cite[section 2]{L-discon}, particularly \cite[Lemma 2.6]{L-discon}. The classification of isolated semisimple elements is known, and we recall it next, following \cite[sections 3.8, 4.1-4.5]{Re3}. Let $\fk t\subset \fk b$ be $\delta$-stable Cartan and Borel subalgebras, respectively. If $\Phi$ is the root system of $\fk g$ corresponding to $\fk t$, with positive roots given by $\fk b$. Call two roots $\al,\beta\in \Phi$ $\delta$-equivalent if $\al|_{\fk t^\delta},\beta|_{\fk t^\delta}$ are proportional via a positive constant. 
If $a$ is a $\delta$-equivalence class in $\Phi$, then $a$ is a $\delta$-orbit in $\Phi$, except in type $A_{2n}$, when $a$ could be of the form $\{\al,\delta(\al),\al+\delta(\al)\}$. Let 
$$\fg_a=\sum_{\al\in a}\fg_\al,\quad \gamma_a=\sum_{\al\in a}\al|_{\fk t^\delta},\ \beta_a=\frac 1{f_a}\gamma_a,$$
where $f_a=|a|$, unless $a$ is the exception in type $A_{2n}$, when $f_a=4.$

With this notation, the root-space decomposition of $\fk g^{t\delta}$, $t=\exp(x)$, $x\in \fk t^{\delta}$ is (\cite[Proposition 3.8]{Re3}):
\begin{equation}\label{e:isolated}
\fk g^{t\delta}=\fk t^\delta\oplus\sum_{a}\fk g_a^{t\delta},
\end{equation}
where the sum of over the $\delta$-equivalence classes $a\in \Phi/\delta$ such that $\langle\gamma_a,x\rangle\in \{-1,0,1\}$. Each $\fk g_a^{t\delta}$ is one-dimensional, affording either a root $\beta_a$ or $2\beta_a$, the latter case may only occur in the exceptional $a$ in $A_{2n}.$

\begin{proposition}\label{p:unip-cent} A nilpotent element $e \in \C N$
  is $\delta$-quasidistinguished if and only if $e \in \C N^\sol,$ i.e.,
  the centralizer $Z_G(e)^0$ is solvable.
\end{proposition}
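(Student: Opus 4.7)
The proof splits into two directions. For $(\Rightarrow)$, assume $e$ is $\delta$-quasidistinguished and fix a witness $t\delta \in Z_{G_\#}(e)$ with $\fm := Z_\fg(t\delta)$ semisimple and $e$ distinguished in $\fm$. Set $\fk z := Z_\fg(e)$. Since $t\delta$ is semisimple in $G_\#$, it acts semisimply on $\fk z$, and we may choose a $t\delta$-stable Levi decomposition $\fk z = \fk l \ltimes R_u(\fk z)$ (the variety of Levi subalgebras of $\fk z$ is a torsor under the unipotent group $R_u(Z_G(e)^0)$, on which $t\delta$ acts with a fixed point). Then $\fk l^{t\delta} = \fk l \cap Z_\fm(e)$ is reductive, while $Z_\fm(e)$ consists entirely of nilpotent elements because $e$ is distinguished in the semisimple algebra $\fm$. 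A reductive subalgebra consisting only of nilpotents must be zero, so $\fk l^{t\delta} = 0$. However, any semisimple automorphism of a nonzero semisimple Lie algebra has a nonzero fixed subalgebra (pass to a $t\delta$-stable Cartan and apply Steinberg's fixed-torus theorem). Hence the derived subalgebra of $\fk l$ vanishes, $\fk l$ is toral, $\fk z$ is solvable, and $e \in \C N^\sol$.

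For $(\Leftarrow)$, suppose $e \in \C N^\sol$ and fix a standard triple $\{e, h, f\}$ for $e$. Then $S := Z_G(e, h, f)^0$, being the reductive part of the solvable group $Z_G(e)^0$, is a torus; and $L := Z_G(S)$ is a Bala-Carter Levi in whose Lie algebra $e$ is distinguished. Since $\delta = -w_0$ preserves each nilpotent $G$-orbit, one may choose $g_0 \in G$ so that $g_0\delta$ centralizes the whole triple $\{e, h, f\}$, i.e., $g_0\delta \in N := Z_{G_\#}(e, h, f)$. The group $N$ is a (possibly disconnected) reductive subgroup of $G_\#$ with $N^0 = S$.

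The plan is to take $t\delta$ to be the semisimple part $(g_0\delta)_s \in N \cap G\delta$ of $g_0\delta$ computed in the reductive group $N$, possibly after modifying $g_0$ by an element of the finite component group $A(e) = (N \cap G)/S$. I would select $t\delta$ so as to satisfy: (a) $Z_G(t\delta)$ is semisimple (i.e., $t\delta$ is isolated in $G_\#$ in the sense of \cite{L-discon}); and (b) $(S^{t\delta})^0 = 1$. Together (a) and (b) force $Z_{Z_G(t\delta)}(e, h, f)^0 = (S^{t\delta})^0 = 1$, so $e$ is distinguished in $\mathrm{Lie}(Z_G(t\delta))$. Existence of such $t\delta$ is expected to follow from the classification of isolated elements in $G\delta$ by vertices of the $\delta$-twisted affine Dynkin alcove (cf.~formula~(\ref{e:isolated}) and \cite{L-discon}): the root system of $Z_G(t\delta)^0$ arises by deleting a vertex from the extended $\delta$-twisted affine diagram, and one chooses a vertex so that $Z_G(t\delta)^0$ contains the derived Bala-Carter subgroup $[L, L]$ with $e$ still distinguished and $(S^{t\delta})^0$ trivial.

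The main obstacle is this final matching, which simultaneously enforces (a), (b), and compatibility of $t\delta$ with the standard triple; following the suggestion of Lusztig credited in the acknowledgments, this is expected to proceed via a case-by-case matching of Bala-Carter data for $e \in \C N^\sol$ with the classification of isolated $t\delta$ across each simple type, exploiting the fact that the action of $\delta = -w_0$ on the Cartan is close to inversion and hence tends to kill central tori.
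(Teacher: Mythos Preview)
Your forward direction $(\Rightarrow)$ is correct and essentially identical to the paper's argument: both reduce to showing that the $t\delta$-fixed part of the reductive Levi of $Z_\fg(e)$ vanishes, then invoke Steinberg's fixed-point theorem \cite[Corollary 10.12]{St1} to kill the derived subalgebra.

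For $(\Leftarrow)$, your reduction is sound and in fact cleaner than you state: once $t\delta\in N=Z_{G_\#}(e,h,f)$, condition (b) alone already forces (a), since the connected center of $Z_G(t\delta)^0$ centralizes the triple, hence lies in $S^{t\delta}$. So the problem genuinely reduces to exhibiting, for each $e\in\C N^\sol$, an element of the coset $N\cap G\delta$ acting on $S$ without eigenvalue~$1$. But you do not prove this exists; you write that it ``is expected to follow'' from the classification of isolated elements and a case-by-case matching. That is exactly where the content lies, and the paper carries it out explicitly type by type (with the $E_6$ case worked in full detail via the five isolated cosets $t_i\delta$). Your proposal is therefore a correct framework with the substantive step left undone.

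One warning about your search heuristic: you suggest choosing the isolated vertex so that $Z_G(t\delta)^0$ contains the Bala--Carter derived subgroup $[L,L]$. This is \emph{not} how the paper's witnesses behave. For instance, in $E_6$ the orbit $D_5(a_1)$ has Bala--Carter Levi of type $D_5$, but the paper realizes it as the regular nilpotent in $Z_G(t_1\delta)\cong B_3\times A_1$, which cannot contain a $D_5$. Similarly $A_4+A_1$ is found in $A_3\times A_1$. The correct picture is that $e$ must be distinguished in $Z_G(t\delta)$, not that $Z_G(t\delta)$ must contain the original Bala--Carter Levi; following your heuristic would send the case analysis in the wrong direction.
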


\begin{proof} One can prove uniformly that ``$e$ is $\delta$-quasidistinguished'' implies ``$e \in \C N^\sol$'' analogously with the untwisted case \cite[Lemma 7.1(1)]{Re2}, as follows. Suppose $e$ is $\delta$-quasidistinguished, and let $t \d\in G \d \subset G_\#$ be a semisimple element as in Definition \ref{d:twisted-quasi}. Let $H_e$ be the (connected) reductive part of $Z_G(e)$ and let $\fh_e$ be the Lie algebra. Since $t\delta\in G_\#$ acts on $H_e$ by conjugation, one can consider $\Ad(t\delta)|_{\fh_e}:\fh_e\to \fh_e$ and let $\fh_e^{t\delta}$ be the fixed points. The algebra $\fh_e^{t\delta}$ is a reductive Lie algebra, since $t\delta$ is semisimple. However, $\fh_e^{t \d}$ does not contain nonzero semisimple element. This means that $\fh_e^{t\delta}=0.$ By \cite[Corollary 10.12]{St1}, $\fh_e$ has zero derived subalgebra, which implies $H_e$ is a torus, equivalently $Z_G(e)^0$ is solvable.

The proof of the converse direction is case by case.

\smallskip

In type $A$, we consider $GL(n)$ rather than $SL(n)$ for simplicity. The nilpotent orbits in $\C N^\sol$ are in one two one correspondence with partitions of $n$ into distinct parts via the Jordan canonical form. Let $\lambda$ be such a partition and break $\lambda$ into $\lambda_0$ containing all the even parts and $\lambda_1$ containing all the odd parts. Let $2m$ be the sum of parts in $\lambda_0$. Let $J_{2m}=\left(\begin{matrix}0&I_m\\-I_m&0\end{matrix}\right)$, where $I_m$ is the identity $m\times m$ matrix. Set $t_{m}=\text{diag}(J_{2m},I_{n-2m})$. Then $t_m \d$ is a semisimple element.

We consider the automorphism $\delta: GL(n)\to GL(n)$ given by $\delta(x)=(x^T)^{-1}.$ Thus $$Z_{GL(n)}(t_m\delta)=Sp(2m)\times O(n-2m).$$ 
Let $e_{\lambda_0}$ be a distinguished nilpotent element in $sp(2m)$ parameterized by the even partition $\lambda_0$ and $e_{\lambda_1}$ a distinguished nilpotent element in $o(n-2m)$ parameterized by the odd partition $\lambda_1$. Then $e_\lambda=\left(\begin{matrix}e_{\lambda_0}&0\\0& e_{\lambda_1}\end{matrix}\right)$ is a representative of the class in $\C N^\sol$ labeled by $\lambda$ and it is $\delta$-quasidistinguished by construction.

\smallskip

In $Sp(2n)$ (resp. $SO(2n+1)$ or $SO(4n)$), the automorphism $\delta$ is trivial, and one can see from the classification of nilpotent classes that the classes in $\C N^\sol$ are parameterized by partitions of $2n$ (resp. $2n+1$ or $4n$) where every part is even (resp. odd) and each part appears with multiplicity at most $2$. It is easy to check that every such nilpotent class is quasidistinguished. For example, suppose $\lambda=(a_1,a_1,a_2,a_2,\dots,a_k,a_k,a_{k+1},\dots,a_\ell)$ is a partition of $2n$, where $a_1<a_2<\dots<a_k<a_{k+1}<\dots<a_\ell$ are even numbers. Let $t_m\in Sp(2n)$ be a semisimple element whose centralizer is $Sp(2m)\times Sp(2n-2m),$ where $2m=\sum_{i=1}^\ell a_i.$ We choose a distinguished nilpotent element $e_1$ in $sp(2m)$ corresponding to the partition $(a_1,a_2,\dots, a_\ell)$ and a distinguished nilpotent element $e_2$ in $sp(2n-2m)$ corresponding to the partition $(a_1,a_{2},\dots,a_k)$. Then $e_\lambda=e_1\times e_2$ is a representative of the nilpotent class in $sp(2n)$ labeled by $\lambda$ and it is quasi-distinguished by construction.

\smallskip

If $G=SO(4n+2)$, $\delta$ corresponds to the automorphism of order $2$ of the Dynkin diagram. Suppose the roots of the corresponding root system of type $D_{2n+1}$ are labeled $\{\al_1,\al_2,\dots,\al_{2n+1}\}$ and that $\delta$ acts by interchanging $\al_{2n}$ and $\al_{2n+1}$ and fixes the other roots. The orbits in $\C N^\sol$ are parameterized by partitions of $4n+2$ into odd parts, each part of multiplicity at most $2$. By \cite[Lemma 2.9(iv)]{LS} and \cite[Table 4.3.1]{Gor}, there exist $n+1$ classes of involutions in $G\delta$ with representatives $t_{i-1}\delta$ having centralizers of type $B_{i-1}\times B_{2n+1-i}$, where $1\le i\le n+1$. Here $t_0=1,$ and $t_i$, $1\le i\le n$, is the order two element in the standard torus in $SO(4n+2)$ corresponding to the root $\al_i$, see for example \cite[(4.4.4)]{Gor} for the precise definition. In particular, $t_{i-1}$ commutes with $\delta$, so $t_{i-1}\delta$ is semisimple, $1\le i\le n+1$. The construction of $\delta$-quasidistinguished orbits proceeds then exactly as in the untwisted $Sp(2n)$ example above. 

\smallskip

When $G$ is exceptional of type $G_2$, $F_4,$ $E_7$, or $E_8$, the automorphism $\delta$ is trivial, and one verifies the claim from the classification of nilpotent orbits and their centralizers. This is an easy direct calculation using the explicit classification of nilpotent classes. We give the results below, but skip the details, since the calculation is very similar to the twisted $E_6$ example which we'll explain in detail.

If $G$ is of type $G_2$ or $F_4$, the only nilpotent orbits in $\C N^\sol$ are already distinguished, so there is nothing to check.

For groups of type $E$, we use the following labeling of the Dynkin diagrams (this is not the Bourbaki notation):
\begin{equation}\label{Dynkin-E6}
\xymatrix@R=4pt{
& &  & {\alpha_4} \ar@{-}[d] \\
&\alpha_1 \ar@{-}[r]&\alpha_2\ar@{-}[r] & {\alpha_3} \ar@{-}[r] & {\alpha_5} \ar@{-}[r] & {\alpha_6}\ar@{-}[r] & {\alpha_7} \ar@{-}[r] & {\alpha_8}.
}\\
\end{equation}

For types $E_7$ and $E_8$, denote $t_i=\exp(\frac 1{(\gamma,\omega_i^\vee)}\omega_i^\vee)\in T,$ where $\omega_i^\vee$ is the fundamental coweight corresponding to the $i$-th simple root, and $\gamma$ is the highest positive root. 

If $G$ is of type $E_7$, the non-distinguished nilpotent orbits in $\C
N^\sol$ are denoted in the Bala-Carter classification \cite{Ca} by
$E_6(a_1)$ and $A_4+A_1$. They come from the regular nilpotent orbits in $Z_G(t_4)=A_7$ and $Z_G(t_3)=A_3\times A_3\times A_1$, respectively. 

If $G$ is of type $E_8$, the non-distinguished nilpotent orbits in $\C
N^\sol$ are $D_5+A_2$, $D_7(a_1)$, $D_7(a_2)$, and $E_6(a_1)+A_1.$
They come from $E_7(a_4)$ in $Z_G(t_8)=E_7\times A_1$, $E_7(a_3)$ in
$Z_G(t_8)=E_7\times A_1$, the regular nilpotent orbit in
$Z_G(t_6)=D_5\times A_3$, and the regular nilpotent orbit in $Z_G(t_2)=A_7\times A_1$, respectively.

\smallskip

It remains to analyze the case $G=E_6$ and $\delta$ coming from the automorphism of order $2$ of the Dynkin diagram. There are seven nilpotent orbits in $\C N^\sol$ labeled: $E_6$, $E_6(a_1)$, $E_6(a_3)$, $D_5$, $D_5(a_1)$, $A_4+A_1$, and $D_4(a_1)$. 

Suppose that $t\delta$ is semisimple. We use (\ref{e:isolated}) to realize $\delta$-quasidistinguished nilpotent orbits. The explicit cases in $E_6$ are in \cite[section 4.5]{Re3}. For each $x\in \fk t^\delta$ such that $t=\exp(x)$ that appears (there are five cases), we compute the simple roots of the Lie algebra $\fk g^{t\delta}$ as in (\ref{e:isolated}). Then for each distinguished nilpotent element in $\fg^{t\delta}$ we match its Dynkin-Kostant diagram (in $\fk g^{t\delta}$) with a diagram in $\fg$. This is done as follows: the Dynkin-Kostant diagram of $e \in \fg^{t\delta}$ gives the values of the simple roots $\beta$ for $\fk g^{t\delta}$ on the neutral element $h_e\in\fk t^{\delta}$, and thus we can determine $h_e$. Next, one makes $h_e$ dominant with respect to the simple roots in $\fk g$ and computes the Dynkin-Kostant diagram in $\fk g.$   The explicit results are below. We denote by $\omega_i^\vee\in \fk t$ the fundamental coweight corresponding to $\al_i.$

\begin{enumerate}

\item[(0)] $x_0=0$, $\fk g^{t_0\delta}=F_4$, with simple roots:
\begin{equation*}
\xymatrix@R=2pt{
 {\alpha_4} \ar@{-}[r]&{\alpha_3}\ar@{=>}[r] 
&{\frac 12(\alpha_2+\alpha_5)} \ar@{-}[r]&{\frac
12(\alpha_1+\alpha_6)}.
}
\end{equation*}
The fixed point group $F_4$ has four distinguished nilpotent orbits: $F_4$, $F_4(a_1)$, $F_4(a_2)$, and $F_4(a_3)$ which correspond in $E_6$ to: $E_6$,  $D_5$,  $E_6(a_3)$, and  $D_4(a_1)$, respectively.

\item[(1)] $x_1=\frac 14(\om_1^\vee+\om_6^\vee)$, $\fk g^{t_1\delta}=B_3\times A_1$, with simple roots:
\begin{equation*}
\xymatrix@R=2pt{
 {\alpha_4} \ar@{-}[r]&{\alpha_3}\ar@{=>}[r] 
&{\frac 12(\alpha_2+\alpha_5)} &{\beta},
}
\end{equation*}
where $\beta=\frac 12[(\al_1+\al_2+2\al_3+\al_4+2\al_5+\al_6)+(\al_1+2\al_2+2\al_3+\al_4+\al_5+\al_6)].$ 
There is only one distinguished nilpotent orbit in $\fk g^{t_1\delta}$, the regular one, which corresponds to $D_5(a_1)$ in $E_6.$

\item[(2)] $x_2=\frac 16(\om_2^\vee+\om_5^\vee)$, $\fk g^{t_2\delta}=A_2\times A_2$, with simple roots:
\begin{equation*}
\xymatrix@R=2pt{
 {\alpha_4} \ar@{-}[r]&{\alpha_3} 
&{\frac 12(\alpha_1+\alpha_6)}\ar@{-}[r] &\beta',
}
\end{equation*}
where $\beta'=\frac 12[(\al_1+2\al_2+2\al_3+\al_4+\al_5)+(\al_2+2\al_3+\al_4+2\al_5+\al_6)]$.
There is only one distinguished nilpotent orbit in $\fk g^{t_2\delta}$, the regular one, which corresponds to $D_4(a_1)$ in $E_6.$

\item[(3)] $x_3=\frac 14 \om_3^\vee$, $\fk g^{t_3\delta}=A_3\times A_1$, with simple roots:
\begin{equation*}
\xymatrix@R=2pt{
 \beta''\ar@{-}[r] &{\frac 12(\alpha_2+\alpha_5)} \ar@{-}[r] 
&{\frac 12(\alpha_1+\alpha_6)}\ar@{-}[r]  &{\al_4},
}
\end{equation*}
where $\beta''=\frac 12[(\al_1+\al_2+2\al_3+\al_4+\al_5)+(\al_2+2\al_3+\al_4+\al_5+\al_6)].$
There is only one distinguished nilpotent orbit in $\fk g^{t_3\delta}$, the regular one, which corresponds to $A_4+A_1$ in $E_6.$

\item[(4)] $x_4=\frac 12 \om_4^\vee$, $\fk g^{t_4\delta}=C_4$, with simple roots:
\begin{equation*}
\xymatrix@R=2pt{
 {\alpha_3} \ar@{=>}[r] 
&\frac 12(\alpha_2+\alpha_5) \ar@{-}[r]&\frac
12(\alpha_1+\alpha_6)\ar@{-}[r] & \beta''',
}
\end{equation*}
where $\beta'''=\frac 12[(\al_2+\al_3+\al_4)+(\al_3+\al_4+\al_5)]$. There are two distinguished nilpotent orbits in $C_4$, the regular orbit $(8)$ and the subregular orbit $(62)$, which correspond in $E_6$ to $E_6(a_1)$ and $E_6(a_3)$, respectively.
\end{enumerate}
This finishes the proof for $E_6$ and therefore, the proof of the proposition.
\end{proof}

\

Now we analyze the action of $\delta$ on $G$-orbits of Lie triples in $\fg$.
%\subsection{} We analyze the action of $\delta$ on $G$-orbits of Lie triples in $\fg$. The definitions from the previous section are easily translated in the language of nilpotent elements in $e\in\fg$ (rather than nilpotent elements $u\in G$).

\begin{lemma}\label{l:4.4} Let $\C O$ be a nilpotent $G$-orbit in $\fg.$ There
  exists a $\delta$-stable Levi subalgebra $\fk m$ and a Lie triple
  $\{e,h,f\}\subset \fk m$, $e\in \CO$, such that $e$ is
  $\delta$-quasidistinguished in $\fk m.$
\end{lemma}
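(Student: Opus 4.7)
I would take $\fm$ to be a $\d$-equivariantly chosen Bala-Carter Levi of a $\d$-fixed representative of $\CO$ and then conclude via Proposition~\ref{p:unip-cent} applied inside $\fm$.

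In each simple type where $\d$ is nontrivial ($A_n$, $D_{2n+1}$, $E_6$), a direct check shows that $\d$ preserves every nilpotent $G$-orbit: in type $A_n$ by the invariance of Jordan type under the transpose-inverse; in $D_{2n+1}$ because no ``very even'' partition of $4n+2$ exists ($4n+2 \not\equiv 0 \pmod 4$); and in $E_6$ because every weighted Dynkin diagram is symmetric under the diagram automorphism. Hence $\CO$ is $\d$-stable and contains a $\d$-fixed element $e$. Applying Jacobson--Morozov inside the reductive fixed-point Lie algebra $\fg^\d$ yields a Lie triple $\{e,h,f\} \subset \fg^\d$, so $\d(h)=h$ and $\d(f)=f$. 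The reductive centralizer $\fh_e := Z_\fg(e,h,f)$ is then $\d$-stable, and by the usual Steinberg argument for $\d$-stable maximal tori it admits a $\d$-stable Cartan subalgebra $\fk s$. Set $\fm := Z_\fg(\fk s)$; this is a $\d$-stable Levi subalgebra of $\fg$ containing $\{e,h,f\}$, and by the Bala--Carter theorem $e$ is distinguished in $[\fm,\fm]$, so the reductive quotient of $Z_M(e)^0$ is the torus $\exp(\fk s)$ and $e \in \CN^\sol(\fm)$.

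To finish I would apply Proposition~\ref{p:unip-cent} to the pair $(M,\d|_M)$ in place of $(G,\d)$, obtaining a witness $t\d \in Z_{M_\#}(e)$ of the $\d$-quasidistinguished property. The main obstacle is that Proposition~\ref{p:unip-cent} was established case-by-case for a simple group $G$, whereas $\fm$ here is merely reductive and $\d|_\fm$ may permute its simple factors and need not coincide with a $-w_0$-type automorphism on factors it preserves. I would address this by reducing to $\d$-orbits of simple factors of $\fm$: a preserved simple factor is handled by Proposition~\ref{p:unip-cent} directly (after identifying the induced involution), and a swapped pair $\{\fm_i,\fm_{i'}\}$ is handled via the diagonal identification $(\fm_i \oplus \fm_{i'})^\d \cong \fm_i$, which reduces it to the untwisted case. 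A further point to verify is that $Z_M(t\d)$ is genuinely semisimple, i.e.\ that the $\d$-fixed part of $\exp(\fk s)$ is finite; this is the delicate step and can be arranged by refining the choice of $\fk s$ so that $\fk s^\d = 0$, in a manner parallel to the type-by-type exhibition of isolated $t\d$ elements in the proof of Proposition~\ref{p:unip-cent}.
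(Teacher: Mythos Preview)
Your approach is more conceptual than the paper's, which simply writes down the Levi $\fm$ by hand in each type (and for $E_6$ provides an explicit table). However, your reduction has a genuine gap at the step you yourself flag as delicate.

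Once you fix $e \in \fg^\delta$ and take the triple inside $\fg^\delta$, the reductive centralizer $\fh_e = Z_\fg(e,h,f)$ satisfies $\fh_e^\delta = Z_{\fg^\delta}(e,h,f)$. Any $\delta$-stable maximal torus $\fk s \subset \fh_e$ then has $\fk s^\delta$ equal to a Cartan subalgebra of $\fh_e^\delta$, and this is nonzero unless $e$ happens to be distinguished in $\fg^\delta$. In $E_6$, for instance, $\fg^\delta$ is of type $F_4$ with only four distinguished orbits, so for the remaining orbits your setup forces $\fk s^\delta \neq 0$; no ``refinement'' of $\fk s$ within $\fh_e$ can repair this, because the obstruction is the rank of $\fh_e^\delta$, not the particular choice of Cartan. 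Moreover, even after passing to the derived subalgebra $[\fm,\fm]$, the induced $\delta$ on a preserved simple factor need not be its own $-w_0$ automorphism (e.g.\ on the $D_4$ Levi $\{2,3,4,5\}$ in $E_6$ it is a nontrivial diagram involution, whereas $-w_0$ for $D_4$ is the identity), so Proposition~\ref{p:unip-cent} is not available as a black box there either.

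The paper circumvents both issues by abandoning the Bala--Carter Levi: its $\fm$ is often strictly larger, chosen so that $e$ lies in $\C N^\sol(\fm)$ rather than being distinguished, and so that the factor structure of $\fm$ under $\delta$ has a controlled shape (any swapped pair of simple factors is of type $A$ carrying a regular nilpotent). This explicit description is exactly the input needed in the subsequent Proposition~\ref{prop:4.5}, so the case-by-case construction is doing real work that your uniform Bala--Carter choice cannot replicate.
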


\begin{proof}
We prove the statement case by case. When $\delta=1$, the claim is
immediate from the Bala-Carter classification, in fact, $\fk m$ can be
chosen so that $e$ is distinguished in $\fk m.$

Let $G=GL(n)$. The nilpotent orbit $\C O$ is given via Jordan form by a
partition $\lambda$ of $n$. Let $I=\{1,2,\dots,n-1\}$ be the indexing
set for the simple roots. The automorphism $\delta$ acts on $I$ as
$\delta(i)=n-i.$  We construct a subset $J\subset I$, such
that $\delta(J)=J.$ 
Start by setting $J=I$. If $\lambda$ has only
distinct parts, by Proposition \ref{p:unip-cent}, $\CO$ is
$\delta$-quasidistinguished in $\fg$, so $\fk m=\fk g.$ Suppose
$\lambda$ has equal parts. Write
$\lambda=\lambda'\sqcup\{r_1,r_1\}\sqcup\dots\sqcup\{r_\ell,r_\ell\}$,
where $\lambda'$ is the largest subset of $\lambda$ having only
distinct parts. Set $k=\sum_{j=1}^\ell r_j.$ Remove from $I$ the indices $r_1, n-r_1,
r_1+r_2,n-(r_1+r_2), \dots, k,n-k,$ the resulting subset is $J$. By
construction, $\delta(J)=J.$ Then $\fk m$ is the Levi subalgebra
corresponding to $J$, thus $\fk m=\sum_{j=1}^\ell (gl(r_j)\oplus
gl(r_j))\oplus gl(n-2k)$. Let $\{e,h,f\}\subset \fk m$ be a Lie triple
in $\fk m$
representing the regular nilpotent orbits on each factor $gl(r_j)$ and
the nilpotent orbit parameterized by $\lambda'$ on $gl(n-2k).$ The
latter is $\delta$-quasidistinguished by Proposition
\ref{p:unip-cent}.

Let $\fg=so(2n)$. Let $I=\{1,2,\dots,n\}$ be the
indexing set of simple roots, and suppose that the branch point of the
Dynkin diagram is at $n-2$, so that $\delta(i)=i$ for all $1\le i\le
n-2$ and $\delta(n-1)=n.$ Suppose $\C O$ is parameterized by a
partition $\lambda$ of $2n$. Then each even part in $\C O$ appears
with even multiplicity. Partition $\lambda$ as $\lambda=\lambda'\sqcup\{r_1,r_1\}\sqcup\dots\sqcup\{r_\ell,r_\ell\}$,
where $\lambda'$ is the largest subset of $\lambda$ such that
$\lambda'$ has only odd parts and each part occurs with multiplicity
at most $2$. As before, set $k=\sum_{j=1}^\ell r_j.$ The subset
$J\subset I$, $\delta(J)=J$, is
$$J=\{1,\dots,r_1-1\}\cup \{r_1+1,\dots,r_1+r_2-1\}\cup \{k-r_\ell+1,\dots,k-1\}\cup\{k+1,\dots,n\},$$
and the corresponding Levi  subalgebra $\fk m=\oplus_{j=1}^\ell
gl(r_j)\oplus so(2n-2k).$ Let $\{e,h,f\}$ be a Lie triple in $\fk m$
such that $e$ represents the regular nilpotent orbits on the $gl(r_j)$
factors and the $\delta$-quasidistinguished orbit parameterized by
$\lambda'$ on the $so(2n-2k)$ factor.

When $\fg$ is of type $E_6$, we list the orbits with the corresponding
$\fk m$ and $e\in \fk m$ in Table \ref{table:E6levi}. The indexing set
$I=\{1,\dots,6\}$ corresponds to the Dynkin diagram
(\ref{Dynkin-E6}). We do not include in the table the
$\delta$-quasidistinguished orbits in $E_6$: $E_6$, $E_6(a_1)$, $E_6(a_3)$,
$D_5$, $D_5(a_1)$, $A_4+A_1$, and $D_4(a_1).$

\begin{table}[h]
\caption{$E_6$: nilpotent orbits\label{table:E6levi}}
\begin{tabular}{|c|c|c|}
\hline
$\C O$  &$\fk m$ &$\delta$-quasidistinsguished $e\in \fk m$\\
\hline
$A_5$ &$\{1,2,3,5,6\}$ &$(6)$\\
$D_4$ &$\{2,3,4,5\}$ &$(7,1)$\\
$A_4$ &$\{1,2,3,5,6\}$ &$(5,1)$\\
$A_3+A_1$ &$\{1,2,3,5,6\}$ &$(4,2)$\\
$2A_2+A_1$ &$\{1,2,4,5,6\}$ &regular\\
$A_3$ &$\{2,3,5\}$ &$(4)$\\
$A_2+2A_1$ &$\{1,3,4,6\}$ &regular\\
$2A_2$ &$\{1,2,5,6\}$ &regular\\
$A_2+A_1$ &$\{1,2,3,5,6\}$ &$(3,2,1)$\\
$A_2$ &$\{2,3,4,5\}$ &$(3,3,1,1)$\\
$3A_1$ &$\{2,4,5\}$ &regular\\
$2A_1$ &$\{2,5\}$ &regular\\
$A_1$ &$\{3\}$ &regular\\
$0$ &$\emptyset$ &$0$\\
\hline
\end{tabular}
\end{table}
\end{proof}

\begin{proposition}\label{prop:4.5} Let $\CO$ be a nilpotent $G$-orbit in $\fg.$ There exists a Lie triple $\{e,h,f\}$, with $e\in \CO$, $h\in \fk t^\delta,$ and an element $g\in G$ such that  $\delta(\phi)=\Ad(g)
  \phi$, and $\delta(g)g\in Z_G(\phi)^0.$ 
\end{proposition}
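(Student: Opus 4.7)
The plan is to reduce to the $\delta$-quasidistinguished situation inside a $\delta$-stable Levi subalgebra via Lemma \ref{l:4.4}, build the triple and a first candidate for $g$ from the isolated semisimple element furnished by Definition \ref{d:twisted-quasi}, and then correct $g$ within its $Z_G(\phi)$-coset so that the cocycle identity holds modulo $Z_G(\phi)^0$.

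First, I apply Lemma \ref{l:4.4} to choose a $\delta$-stable Levi subalgebra $\fk m\subseteq\fg$ and an element $e\in\CO$ that is $\delta$-quasidistinguished in $\fk m$. By Definition \ref{d:twisted-quasi} applied inside $M$, there is a semisimple $s\delta\in Z_{M_\#}(e)$ with $Z_M(s\delta)$ semisimple and $e$ distinguished in $\mathrm{Lie}\,Z_M(s\delta)$. Steinberg's quasi-semisimplicity theorem then provides a $\delta$-stable maximal torus $T_M\subset M$ with $s\in T_M$; its Lie algebra $\fk t:=\mathrm{Lie}(T_M)$ is a $\delta$-stable Cartan of $\fg$ because $\fk m$ is a Levi. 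Since $s\in T_M$ acts trivially on $T_M$ by conjugation, $T_M^{s\delta}=T_M^\delta$, and this is a maximal torus of $Z_M(s\delta)$. Applying Jacobson-Morozov inside the reductive group $Z_M(s\delta)$, and using that $e$ is distinguished there, I embed $e$ in a standard triple $\{e,h,f\}\subset\mathrm{Lie}\,Z_M(s\delta)$ with $h\in\fk t^\delta$, as required.

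Because $s\delta$ centralizes both $e$ and $h$, and $f$ is uniquely determined by $(e,h)$, it centralizes the whole triple $\phi$; equivalently $\delta(\phi)=\Ad(s^{-1})\phi$. Take $g:=s^{-1}$ as a first candidate. A direct calculation in the abelian, $\delta$-stable torus $T_M$ gives
$$
\delta(g)\,g \;=\; \delta(s)^{-1}s^{-1} \;=\; \bigl(s\delta(s)\bigr)^{-1} \;=\; \bigl((s\delta)^2\bigr)^{-1},
$$
an element of $T_M^\delta\cap Z_G(\phi)$.

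The main obstacle is to arrange $(s\delta)^2\in Z_G(\phi)^0$. My plan is to adjust $g$ within the coset $s^{-1}Z_G(\phi)$: replacing $s$ by $sz$ with $z\in T_M\cap Z_G(\phi)$ preserves centralization of $\phi$ by the modified $s\delta$ and multiplies $(s\delta)^2$ by the norm $N(z)=z\delta(z)$. Since $e$ is distinguished in $Z_M(s\delta)$, the group $T_M^\delta\cap Z_M(e)$ is finite, so $(s\delta)^2$ already has finite order. In characteristic zero the norm map $t\mapsto t\delta(t)$ surjects onto the identity component of the $\delta$-fixed part of any complex torus (its differential $1+\delta$ has image $\fk t^\delta$); combining this with the explicit case-by-case constructions inherited from Lemma \ref{l:4.4} (which parallel those of Proposition \ref{p:unip-cent}) lets me kill the class of $(s\delta)^2$ in $A(\phi)=Z_G(\phi)/Z_G(\phi)^0$. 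The resulting $g$ then satisfies both conditions. This cocycle correction is the delicate point of the argument, where the isolated structure of $s\delta$ and, if needed, a case-by-case verification must be invoked.
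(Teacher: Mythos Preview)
Your outline matches the paper's strategy: reduce via Lemma~\ref{l:4.4} to the $\delta$-quasidistinguished situation, take $g$ to be the inverse of the torus part of the isolated semisimple element, and then worry about the class of $\delta(g)g$ in $A(\phi)$. The gap is in the last step. Your norm-map argument does not finish: you need $z\in T_M\cap Z_G(\phi)$ with $z\delta(z)$ hitting $(s\delta)^2$ modulo $Z_G(\phi)^0$, but the norm $N:T_M\to T_M^\delta$ only surjects onto the identity component, and you have not identified the class of $(s\delta)^2$ in the cokernel or explained why it lies in the image when restricted to $T_M\cap Z_G(\phi)$. You yourself flag this (``if needed, a case-by-case verification must be invoked'') but never carry it out, so the argument as written is incomplete precisely at the point that carries all the content.

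The paper avoids this correction step altogether by using more specific information. Rather than an arbitrary $s\delta$ from Definition~\ref{d:twisted-quasi}, it takes the explicit isolated element $t\delta$ with $t\in T^\delta$ constructed in the proof of Proposition~\ref{p:unip-cent}. Since $\delta(t)=t$, one gets $\delta(g)g=t^{-2}$ directly, with no cocycle ambiguity. The paper then observes, by inspection of those explicit constructions, that $t^2=1$ except when $\fg$ is of type $A_n$, or of type $E_6$ with $\CO\in\{D_5(a_1),\,A_4+A_1\}$; and in each of these exceptional cases $Z_G(\phi)$ happens to be connected, so $t^{-2}\in Z_G(\phi)^0$ trivially. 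For non-$\delta$-quasidistinguished $e$, the paper exploits the further structural fact (visible in the proof of Lemma~\ref{l:4.4}) that whenever $\delta$ swaps two Levi factors they are of type $A_{r-1}$ carrying the same regular nilpotent, so $\delta$ already fixes the pair and the reduction to $\fk m$ goes through with $g\in M$ and $\delta(g)g\in Z_M(\phi)^0\subset Z_G(\phi)^0$. In short, the paper replaces your abstract cocycle correction by a concrete case check that is actually short, because the relevant $t$ were chosen with care in Proposition~\ref{p:unip-cent}.
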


\begin{proof}
Assume $\delta\neq 1$ (otherwise $g=1$). First, suppose $e$ is
$\delta$-quasidistinguished in $\fg$. By definition, there exists
$t\delta$, $t\in T^\delta$, an isolated semisimple element of
$G\delta$ such that $e\in Z_\fg(t\delta).$ We may choose the Lie triple
so that $\phi\subset Z_\fg(t\delta).$ Thus $\Ad(t\delta) \phi=\phi,$ or equivalently $\delta(\phi)=\Ad(t^{-1})\phi.$ So
we may choose $g=t^{-1}\in T^\delta$, and then $\delta(g)g=t^{-2}$. We
claim that $t^{-2}\in Z_G(\phi)^0.$ Indeed, by the proof of
Proposition \ref{p:unip-cent}, $t^2=1$ unless $\fg$ is of type $A_n$
or when $\fg$ is of type $E_6$ and $\CO$ is $D_5(a_1)$ or
$A_4+A_1$. But in all of these cases, $Z_G(\phi)$ is connected, so
there is nothing to prove.

If $e$ is not $\delta$-quasidistinguished in $\fg,$ by Lemma
\ref{l:4.4}, there exists a $\delta$-stable Levi $\fk m$ such that
$\phi\subset \fk m$ and $e$ is $\delta$-quasidistinguished in $\fk m.$
From the proof of Lemma \ref{l:4.4}, we see that whenever $\fk m$ has
two factors of the same type which are flipped by $\delta$, the two
factors are of type $A_{r-1}$ (for some $r$) and the nilpotent
elements on these factors are equal to a regular nilpotent
element $e_{r-1}$. This means that $\delta$ fixes the pair
$(e_{r-1},e_{r-1})$, and therefore by the discussion in the case $\fk
m=\fk g$, there exists $g\in \Ad(\fk m)$ such that $\delta(g)g\in
Z_M(\phi)^0=M\cap Z_G(\phi)^0.$ 
\end{proof}

\section{An action of $W_\#$ on $H^*(\C B_e)$}\label{w3}

\subsection{} Let $\d$ be the automorphism on $G$ and on $W$ defined
in section \ref{sec: 4.1} and $W_\#=W \rtimes
\langle\delta\rangle$. In this section, we construct a natural action
of $W_\#$ on $H^*(\C B_e)$, which extends the action of $W$ we
discussed in section \ref{sec:2.2}, such that the following theorem holds. 

\begin{theorem}\label{t:4.1} For every $e\in \C N$ and $0\le i\le d_e$, 
$$\tr(\delta w, H^{2i}(\C B_{e}))=(-1)^i \sgn(w_0)\tr(w_0 w, H^{2i}(\C B_{e})).$$
\end{theorem}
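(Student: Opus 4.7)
The plan is to construct a $W_\#$-action extending the Springer $W$-action on $H^*(\C B_e)$ using the data of Proposition \ref{prop:4.5}, and then to derive the stated trace identity by comparing the $\delta$-trace with the classical Green polynomial formulas of Beynon-Spaltenstein, Hotta-Springer, and Shoji.

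For the construction, I would invoke Proposition \ref{prop:4.5} to select a Lie triple $\phi=\{e,h,f\}$ with $h\in\ft^\delta$ and an element $g\in G$ satisfying $\delta(\phi)=\Ad(g)\phi$ together with $\delta(g)g\in Z_G(\phi)^0$. The composition $\tilde\delta:=\Ad(g^{-1})\circ\delta$ is then an automorphism of $G$ fixing the triple pointwise; in particular $\tilde\delta(e)=e$, so it restricts to a self-map of the Springer fiber $\C B_e$ and induces a linear operator $\delta_*$ on $H^*(\C B_e)$. To check well-definedness, two admissible choices of $g$ differ by an element $z\in Z_G(\phi)$; since the connected centralizer acts trivially on cohomology, I need only verify that the resulting ambiguity is absorbed in a subgroup of $A(e)$ acting trivially, and this is precisely what is enforced by $\delta(g)g\in Z_G(\phi)^0$ via the identity $\delta(gz)(gz)=(\delta(g)g)\cdot(g^{-1}\delta(z)g)\cdot z$. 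Naturality of the Springer action under $G$-automorphisms preserving $\phi$ then yields $\delta_*\circ w=\delta(w)\circ\delta_*$ for $w\in W$, producing the desired $W_\#$-action.

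For the trace identity, the claim is equivalent to showing that $\delta_*$ acts on $H^{2i}(\C B_e)$ as the scalar $(-1)^i\sgn(w_0)$ times the Springer operator $w_0$. I would establish this by writing both sides as graded character values in the Lusztig-Shoji framework of section \ref{sec:2.2}: the Green polynomial symmetries of Beynon-Spaltenstein \cite{BS} and Shoji \cite{Sh} encode the action of $w_0$ on $H^{2i}(\C B_e)$ degree-by-degree, while the Hotta-Springer \cite{HS} trace formula expresses $\tr(w, H^{2i}(\C B_e))$ as an explicit $W$-character sum. Since $\tilde\delta$ acts on $V$ as $-w_0$, its action on the graded coinvariants side scales each degree-$i$ piece by $(-1)^i$ composed with $w_0$; reconciling this with the $\sgn$-twist built into the normalization $X_q(e)=\sum q^{d_e-i}H^{2i}(\C B_e)\otimes\sgn$ produces the $\sgn(w_0)$ factor and completes the identification.

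The main obstacle will be the well-definedness check. The hypothesis $\delta(g)g\in Z_G(\phi)^0$ in Proposition \ref{prop:4.5} is engineered precisely for this purpose, but exploiting it requires controlling the image of $Z_G(\phi)$ in $A(e)$ under the twisted involution $z\mapsto(g^{-1}\delta(z)g)\cdot z$ --- essentially a case-by-case check informed by the classification of $\delta$-quasidistinguished orbits from section \ref{sec:sol}, particularly for $E_6$ and the twisted $D_{2n+1}$ case. Once the $W_\#$-action is firmly in place, the trace identity reduces to a bookkeeping exercise using the Green polynomial identities from \cite{BS,HS,Sh}.
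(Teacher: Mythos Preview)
Your construction of the $W_\#$-action via $\tilde\delta=\Ad(g^{-1})\circ\delta$ is essentially the paper's, and your identification of \cite{BS,HS,Sh} as the decisive input is correct. But the way you propose to extract the trace identity from those references does not work, and this is where the real content lies.

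The results of Hotta--Springer, Shoji, and Beynon--Spaltenstein are not statements about Green polynomial symmetries or actions on coinvariants; they are comparison theorems for \emph{Frobenius} traces on Springer-fiber cohomology over a finite field. Concretely, with $F$ the split Frobenius and $F'=F\delta$ the twisted one, they assert
\[
\tr\bigl((F')^*\circ w,\,H^{2i}(\C B_e)\bigr)=(-1)^i\sgn(w_0)\,\tr\bigl(F^*\circ w_0 w,\,H^{2i}(\C B_{\sigma(e)})\bigr)
\]
for a suitable bijection $\sigma$ on $G^F$-orbits. Your target identity involves no Frobenius at all, so the substance of the argument is to \emph{cancel} the Frobenius from both sides. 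The paper does this by first choosing a specific element $g_1$ (constructed via a Lang-theorem argument) so that $e$ is \emph{split} with respect to $F_1=\Ad(g_1)^{-1}\circ F$; then $F^*$ and $F_1^*$ both act on $H^{2i}$ by the scalar $q^i$, and dividing through by $q^i$ yields the theorem for $\delta\mapsto\Ad(g_1)^*\circ\delta^*$. Only \emph{after} this does one check that $g_1$ satisfies the component-group condition, and then pass to characteristic zero via Proposition~\ref{prop:4.5}.

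Your alternative --- ``$\tilde\delta$ acts on $V$ as $-w_0$, hence on the degree-$i$ piece of coinvariants by $(-1)^i w_0$'' --- is not a valid argument for $H^{2i}(\C B_e)$: the Springer fiber cohomology is not the coinvariant algebra when $e\neq 0$, and an automorphism fixing $e$ has no reason a priori to act on $H^{2i}(\C B_e)$ by a Weyl group element up to sign. That is precisely what needs proof, and the only known route is the finite-field Frobenius comparison you have omitted.
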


In particular, we have

\begin{corollary}\label{c:action-1}
For any $w \in W$, $$X_q(e,\phi)(\d w)=(-1)^{d_e}\sgn(w_0)X_{-q}(e, \phi)(w_0 w).$$ 
\end{corollary}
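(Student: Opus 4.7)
The plan is to derive Corollary \ref{c:action-1} as a direct formal consequence of Theorem \ref{t:4.1} combined with the definition
\[
X_q(e,\phi) = \sum_{i\ge 0} q^{d_e-i}\, H^{2i}(\C B_e)^\phi \otimes \sgn
\]
in $R_q(W)$, where $H^{2i}(\C B_e)^\phi = \Hom_{A(e)}[\phi, H^{2i}(\C B_e)]$.

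First, I would observe that by the construction of the $W_\#$-action on $H^*(\C B_e)$ given in this section, the operator induced by $\d$ commutes with the $A(e)$-action up to the automorphism of $A(e)$ that $\d$ induces. When $\phi\in\widehat{A(e)}_0$ is fixed by this induced automorphism, $\d$ preserves the multiplicity space $H^{2i}(\C B_e)^\phi$, and the trace identity of Theorem \ref{t:4.1} descends component-wise to
\[
\tr(\d w, H^{2i}(\C B_e)^\phi) = (-1)^i \sgn(w_0)\, \tr(w_0 w, H^{2i}(\C B_e)^\phi).
\]
(For $\phi$ not fixed by $\d$, both sides of the identity we wish to prove vanish on the corresponding isotypic summand, so we may restrict to the fixed case.) I would then extend the sign character of $W$ to $W_\#$ by declaring $\sgn(\d)=\sgn(w_0)$, so that $X_q(e,\phi)$ extends canonically to a virtual character of $W_\#$ via $X_q(e,\phi)(w')=\sum_i q^{d_e-i}\sgn(w')\tr(w',H^{2i}(\C B_e)^\phi)$ for $w'\in W_\#$.

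The remainder is a direct calculation. From the definition,
\[
X_{-q}(e,\phi)(w_0 w) = \sum_{i\ge 0}(-q)^{d_e-i}\sgn(w_0 w)\,\tr(w_0 w, H^{2i}(\C B_e)^\phi),
\]
so multiplying by $(-1)^{d_e}\sgn(w_0)$ and using $\sgn(w_0)^2=1$ gives
\[
(-1)^{d_e}\sgn(w_0)\,X_{-q}(e,\phi)(w_0 w) = \sgn(w)\sum_i q^{d_e-i}(-1)^i\,\tr(w_0 w, H^{2i}(\C B_e)^\phi).
\]
Substituting the refined form of Theorem \ref{t:4.1} to replace $(-1)^i\tr(w_0 w,\cdot)$ by $\sgn(w_0)\tr(\d w,\cdot)$, this equals
\[
\sgn(w)\sgn(w_0)\sum_i q^{d_e-i}\,\tr(\d w, H^{2i}(\C B_e)^\phi) = X_q(e,\phi)(\d w),
\]
where the last equality uses $\sgn(\d w)=\sgn(w)\sgn(w_0)$ under our chosen extension.

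The main obstacle is not conceptual but bookkeeping: one must verify that the $W_\#$-action of this section interacts cleanly with the $A(e)$-isotypic decomposition of $H^*(\C B_e)$, so that Theorem \ref{t:4.1} refines to each $\phi$-component, and one must commit to the correct extension $\sgn(\d)=\sgn(w_0)$. Both points follow directly from the construction of the $W_\#$-action carried out earlier in section \ref{w3}.
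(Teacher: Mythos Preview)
Your proof is correct and follows exactly the route the paper has in mind: the paper offers no argument beyond ``In particular, we have'', and your derivation---substitute Theorem~\ref{t:4.1} into the definition of $X_q(e,\phi)$ and track the sign---is the intended one.

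Two small comments on your bookkeeping. First, your caveat about ``$\phi$ fixed by the automorphism of $A(e)$ induced by $\d$'' is unnecessary: by the construction in section~\ref{choice1} the $W_\#$-action on $\Psi_e$ commutes with $A(e)$ on the nose, so every $\phi$-isotypic component is automatically $\d$-stable; in fact the proof of Theorem~\ref{t:4.1} shows the stronger statement that $\d$ acts on $H^{2i}(\C B_e)$ as the scalar $(-1)^i\sgn(w_0)$ times $w_0$, from which the refinement to $\phi$-components is immediate. Second, your choice $\sgn(\d)=\sgn(w_0)$ is indeed the convention required for the corollary as stated, and the paper is tacitly using it (you can cross-check against equation~(\ref{conj-twisted}) in Theorem~\ref{t:delta-twisted}); it would have been worth noting that the paper never makes this extension explicit, but you have identified the right normalization.
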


\subsection{} We assume that $\d \neq \id$. 

We fix a $\d$-stable Borel subalgebra $\fk b$ of $\fg$. Let $\fg_{\reg}$ be the set of regular semisimple elements in $\fg$ and $\fk b_{\reg}=\fg_{\reg} \cap \fk b$. Define the action of $B$ on $G \times \fk b$ by $b \cdot (g, b')=(g b \i, \Ad(b) b')$. Let $G \times^B \fk b$ be the quotient scheme and $G \times^B \fk b_{\reg}$ be the image of $G \times \fk b_{\reg}$ in $G \times^B \fk b$. The Springer resolution (of $\fg$) is the map 
\begin{equation}
q: G \times^B \fk b \to \fg, \quad (g, b) \mapsto \Ad(g) b. 
\end{equation}

Its restriction $q_{\reg}$ to $G \times^B \fk b_{\reg}$ gives an unramified Galois covering of $\fg_{\reg}$ whose Galois group is $W$. 

We fix a prime number $l$ invertible in $\bF$. Let $\bbq_{G \times^B \fk b}$ be the trivial sheaf on $G \times^B \fk b$ and $\bbq_{G \times^B \fk b_{\reg}}$ be the trivial sheaf on $G \times^B \fk b_{\reg}$. Set $\Psi=Rq_! \bbq_{G \times^B \fk b} [\dim(G)]$ and $\Psi_{\reg}=R (q_{\reg})_! \bbq_{G \times^B \fk b_{\reg}} [\dim(G)]$. Since $q$ is a small map, $\Psi$ is the intersection cohomology complex $IC(\fg, \Psi_{\reg})$. Thus $\End(\Psi)=\End(\Psi_{\reg}) \cong \bbq[W]$ and we have a natural action of $W$ on $\Psi$. Notice that the map $q$ is in fact $G_\#$-equivariant. Hence the automorphism $\d$ on $G$ induces an action $\d^*: \Psi \to \Psi$. We have that $(\d^*)^2=1$ and $(\d)^* w=\d(w) \d^*: \Psi \to \Psi$. 

\subsection{}\label{choice1} Let $e \in \C N$. If $g \in G$ such that $\d(e)=\Ad(g) (e)$. Then $\d(g) g \in Z_G(e)$. We choose $g \in G$ such that $\d(e)=\Ad(g) (e)$ and the image of $\d(g) g$ in $A(e)$ lies in the kernel of $\phi$ for all $\phi \in \widehat{A(e)}_0$. By Proposition \ref{prop:4.5}, such $g$ always exists. Thus $\Ad(g)^* \circ \d^*: \Psi_e \to \Psi_e$ satisfies 
\begin{equation}\label{e:5.2.1}
(\Ad(g)^* \circ \d^*)^2=(\Ad(\d(g) g))^*=\id.
\end{equation}

For any $w \in W$, we have the following commuting diagram 

\begin{equation}\label{e:5.2.2}
\xymatrix{\Psi_e \ar[r]^-{\d^*} \ar[d]_-{\d(w)} & \Psi_{\Ad(g) e } \ar[r]^-{\Ad(g)^*} \ar[d]_-w & \Psi_e \ar[d]_-w \\ \Psi_e \ar[r]^-{\d^*} & \Psi_{\Ad(g)e} \ar[r]^-{\Ad(g)^*} & \Psi_e.}
\end{equation}

From (\ref{e:5.2.1}) and (\ref{e:5.2.2}), we see that the map 
\begin{equation}\label{e:action-W-sharp}
w \mapsto w, \quad \d \mapsto \Ad(g)^* \circ \d^*
\end{equation}
 gives an action of $W_\#$ on $\Psi_e$ and hence on the cohomology of $\Psi_e$. By construction, this action commutes with the action of $A(e)$ on $\Psi_e$. In particular, for any $i \ge 0$ and $\phi \in \widehat{A(e)}_0$, we may regard $H^{2 i}(\C B_e)^\phi$ as a $W_\#$-module and thus $X_q(e, \phi)$ as a virtual character of $W_\#$. 

Notice that the $W_\#$-module structure depends on the choice of $g$. If we pick a different element $g' \in G$ such that $\d(e)=\Ad(g')(e)$ and $\d(g') g' \in Z_G(e)^0$, then $g \i g' \in Z_G(e)$ and thus the actions of $\delta$ on $H^{2 i}(\C B_e)^\phi$ (defined using $g$ and $g'$) differ by  $\phi(g \i g')$. 

\subsection{}\label{nice-choice} Now we discuss the choice of $g$
which makes the action of $\delta$ on $H^*(\C B_e)$ nice. We construct
such an element $g=g_1$ without using Proposition \ref{prop:4.5}. {A similar action of $A(e) \rtimes \langle\delta\rangle$ on $H^*(\C B_e)$ is studied by Bezrukavnikov and Mirkovi\'c in \cite{BM}.}

As before we assume $p$ and $q$ are large. We assume furthermore that $p \equiv 1 \mod 3$ if $G$ is of type $E_8$. 
Let $F:G\to G$ be the split Frobenius map. For $e \in \C N^F$, $F(\C B_e)=\C B_e$. We say that $e$ {\it split} (with respect to $F$) if all the irreducible components of $\C B_e$ are $F$-stable. By \cite[Proposition 3.3]{Sh} and \cite[Section 3]{BS}, each nilpotent orbit of $\fg$ contains exactly one split $G^F$-orbit. 

Let $F'=F\delta=\d F:G\to G$ be a twisted Frobenius map. The action of $F'$ on $W$ is conjugation by $w_0$. The following result is proved by Hotta and Springer for unitary groups in \cite[Theorem 3.1]{HS}, as a consequence of a specialization theorem, by Shoji for the other classical groups in \cite[Theorem 4.18]{Sh}, and by Beynon and Spaltenstein for exceptional groups, in particular for $E_6$ in \cite[Theorem 4.1]{BS}, as a consequence of the Lusztig-Shoji algorithm.
\begin{theorem}\label{HS-S-BS}
Let $\C O$ be a nilpotent orbit of $\fg$. There exists a bijection $\s$ from the set of $G^{F'}$-orbits in $\C O^{F'}$ to the set of $G^F$-orbits in $\C O^F$ such that 
\begin{equation*}
\tr((F')^* \circ w, H^{2i}(\C B_{e}))=(-1)^i\sgn(w_0) \tr (F^* \circ w_0 w, H^{2i}(\C B_{\s(e)})).
\end{equation*}
\end{theorem}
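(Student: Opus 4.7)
The plan is to reduce Theorem \ref{HS-S-BS} to a comparison of the $F$- and $F'$-twisted Lusztig--Shoji algorithms, and read off the bijection $\sigma$ together with the trace identity from the resulting matrix comparison. Since $\delta=\id$ for types $B_n$, $C_n$, $D_{2n}$, $E_7$, $E_8$, $F_4$, and $G_2$, only types $A_n$, $D_{2n+1}$, and $E_6$ actually give $F'\neq F$; for the other types, the statement collapses to an untwisted symmetry under $w\mapsto w_0 w$ on $H^*(\CB_e)$, which I would establish by a Poincar\'e-duality argument combined with the isomorphism $H^{2i}(\CB_e)\otimes\sgn\cong H^{2(d_e-i)}(\CB_e)$ and the hard Lefschetz theorem.

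First, I would parametrize orbits. Every $G^{F'}$-orbit in $\CO^{F'}$ is indexed by a $\delta$-twisted conjugacy class in $A(e)$, and every $G^F$-orbit in $\CO^F$ by an ordinary conjugacy class. The bijection $\sigma$ is then induced by the natural correspondence between these two sets, using that conjugation by a suitable representative intertwines the two Frobenius structures; in particular, $\sigma$ must send the distinguished split $G^{F'}$-orbit to the distinguished split $G^F$-orbit, whose existence and uniqueness follow from the standard argument that recovers the trivial class.

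Next, I would perform the trace comparison via the matrix equation $K(q)\Lambda(q)K(q)^t=\Omega(q)$ recalled in Section \ref{sec:2}. The matrix $K(q)$ encodes the Springer correspondence and is intrinsic, whereas $\Lambda(q)$ and $\Omega(q)$ carry the Frobenius information. Under the $\delta$-twist, the fake degree matrix $\Omega(q)$ transforms through the action of $-w_0$ on the reflection representation: replacing $w$ by $w_0w$ in the trace of an irreducible $W$-character contributes the factor $\sgn(w_0)$. The sign $(-1)^i$ in cohomological degree $2i$ arises from the grading shift induced by $\delta$ acting as $-1$ on the $\delta$-invariant part of $V$, or equivalently from the substitution $q\mapsto -q$ in $X_q(e,\phi)$. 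Upper-unitriangularity of $K(q)$ in an order induced by the closure relation on nilpotent orbits makes the solution to the matrix equation unique, so the comparison of $\Lambda$ and $\Omega$ under the two Frobenii propagates directly to the required identity of Green polynomials.

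The main obstacle is verifying this scheme effectively in the three nontrivial cases. For type $A_n$, I would follow Hotta--Springer's specialization argument, which matches Frobenius eigenvalues on the cohomology of Springer fibers for unitary and general linear groups via a cellular decomposition of $\CB_e$. For $D_{2n+1}$, Shoji's analysis in terms of symbols explicitly controls the action of $\delta$ on $\widehat{A(e)}_0$ and produces both $\sigma$ and the sign in terms of the parity of the $\delta$-fixed symbols. For $E_6$, Beynon--Spaltenstein read the identity off from a direct computation of $K(q)$ via the Lusztig--Shoji algorithm on each of the twenty-one nilpotent orbits. The delicate point is that, outside classical types, there is no intrinsic combinatorial description of how $\delta$ permutes the Springer data, so a fully uniform construction of $\sigma$ and the precise matching of signs ultimately requires the Lusztig--Shoji algorithm as a computational tool rather than a conceptual framework.
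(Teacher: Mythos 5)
Your overall strategy for the genuinely twisted types ($A_n$ with the unitary Frobenius, ${}^2D_{2n+1}$, ${}^2E_6$) coincides with the paper's: the theorem is not reproved there but quoted from Hotta--Springer (the specialization theorem), Shoji (classical groups), and Beynon--Spaltenstein ($E_6$), exactly the three sources you invoke. Your parametrization of $G^{F'}$-orbits by $F'$-conjugacy classes of $A(e)$, and the heuristic that the twist amounts to $q\mapsto -q$ together with the factor $\sgn(w_0)$ in the Lusztig--Shoji data, are consistent with how those references (and the paper's subsequent use of the theorem) organize the computation.

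The genuine gap is in your treatment of the untwisted case $\delta=\id$ (types $B_n$, $C_n$, $D_{2n}$, $G_2$, $F_4$, $E_7$, $E_8$), which is part of the statement and is precisely where the paper has to supply an argument of its own. You propose to deduce the identity from Poincar\'e duality and hard Lefschetz via an isomorphism $H^{2i}(\CB_e)\otimes\sgn\cong H^{2(d_e-i)}(\CB_e)$. This is false for general Springer fibers: $\CB_e$ is usually singular and reducible, so neither Poincar\'e duality nor hard Lefschetz applies, and the graded dimensions are not symmetric about $d_e$. Already for the subregular nilpotent in $sl(3)$ one has $H^0(\CB_e)=\triv$ while $H^2(\CB_e)$ is the two-dimensional reflection representation, so $H^0\otimes\sgn\not\cong H^2$. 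The correct untwisted assertion is that the central element $w_0$ acts on every irreducible constituent $\mu$ of $H^{2i}(\CB_e)^\phi$ by $(-1)^{\text{hdeg}(\mu)}=\sgn(w_0)(-1)^i$, a parity constraint on lowest harmonic degrees which is not a formal duality statement. The paper obtains it for classical groups from Shoji's Theorem 4.18 and, for the exceptional groups where no reference exists, by adapting Shoji's argument: one produces a conjugacy class $c_0$ of $A(e)$ with $\phi(c_0)=d_\phi d_1$ for all $\phi$, defines $\sigma(\CO_e^F(c))=\CO_e^F(cc_0)$, and verifies the polynomial identity $|Z_{G^F}(e_c)|(-q)=|Z_{G^F}(e_{cc_0})|(q)$ from the classification tables (or, alternatively, reads the parity off the explicitly computed Green polynomials of Shoji and Beynon--Spaltenstein). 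Your duality argument would need to be replaced by something of this kind.
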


We assume furthermore that $\s(e)$ is split with respect to $F$. Let $g \in G$ such that $\d(e)=\Ad(g)(e)$. Then $e \in \C O^{F_0}$, where $F_0=\Ad(g \i) \circ F$ is again a split Frobenius morphism. There exists $h \in G$ such that $\Ad(h)(e)$ is split with respect to $F_0$, i.e. $F_0(\Ad(h)(e))=\Ad(h)(e)$ and all the irreducible components of $\C B_{\Ad(h)(e)}=\Ad(h) \C B_e$ are $F_0$-stable. In other words, $h \i F_0(h) \in Z_G(e)$ and all the irreducible components of $\C B_e$ are $\Ad(h \i F_0(h)) \circ F_0=\Ad(h \i) \circ F_0 \circ \Ad(h)$-stable. Set 
\begin{equation}\label{g1}
g_1=g (h \i F_0(h)) \i,
\end{equation}
 and $F_1=\Ad(g_1) \i \circ F$. Then $\d(e)=\Ad(g_1)(e)$ and $F_1$ is a split Frobenius morphism and $e$ is split with respect to $F_1$. 

By \cite[section 5(C)]{BS}, $F^*$ acts by $q^i$ on $H^{2 i}(\C B_{\s(e)})$ and $F_1^*$ acts by $q^i$ on $H^{2 i}(\C B_e)$. Thus
\begin{align*}
\tr((F')^* \circ w, H^{2i}(\C B_{e})) &=\tr(F_1^* \circ (\Ad(g_1)^* \circ \d^*) \circ w, H^{2i}(\C B_{e})) \\ &=q^i \tr((\Ad(g_1)^* \circ \d^*) \circ w, H^{2i}(\C B_{e}))
\end{align*}
and
\begin{align*}
\tr(F^* \circ w_0 w, H^{2i}(\C B_{\s(e)})) &=q^i \tr(w_0 w, H^{2i}(\C B_{\s(e)})) \\ &=q^i \tr(w_0 w, H^{2i}(\C B_{e})).
\end{align*}

By Theorem \ref{HS-S-BS}, 
\begin{equation}\label{eq-4.3.1}
\tr((\Ad(g_1)^* \circ \d^*) \circ w, H^{2i}(\C B_{e}))=(-1)^i \sgn(w_0)\tr(w_0 w, H^{2i}(\C B_{e})).
\end{equation}

In particular,  $$\tr((\Ad(g_1)^* \circ \d^*) \circ w_0, H^{2i}(\C B_{e}))=(-1)^i \sgn(w_0)\dim(H^{2i}(\C B_{e})).$$
 Since $(\Ad(g_1)^* \circ \d^*)$ commutes with $w_0$ by (\ref{e:5.2.2}),  $(\Ad(g_1)^* \circ \d^* \circ w_0)^2=\Ad(\d(g_1) g_1)^*$ acts on $H^{2 i}(\C B_e)$ via the image of $\d(g_1) g_1$ in $A(e)$, and so $\Ad(g_1)^* \circ \d^* \circ w_0$ acts on $H^{2 i}(\C B_e)$ as an element of finite order. Hence it acts by the scalar $(-1)^i \sgn(w_0)$. 

So $(\Ad(g_1)^* \circ \d^* \circ w_0)^2=\Ad(\d(g_1) g_1)^*$ acts on $H^{2 d_e}(\C B_e)$ as the identity. Therefore the image of $\d(g_1) g_1$ in $A(e)$ lies in the kernel of $\phi$ for all $\phi \in \widehat{A(e)}_0$. 

\subsection{} We proved Theorem \ref{t:4.1} over a finite
field. In order to pass from (large) characteristic $p$ to characteristic $0$,
first note that the representation of $W$ on $H^{2 i}(\C B_e)$ is
independent of the characteristic \cite[section 3]{Sp-Weyl}.
%particularly \cite[Proposition 3.9 and Remark 3.10]{Sp-Weyl}. 
Now we
choose $g$ as in the proof of Proposition \ref{prop:4.5}. Then the
action of $\Ad(g)^* \circ \d^*$ is again independent of the
characteristic. As explained in section \ref{choice1}, there exists $z
\in A(e)$ such that for any $i$ and $\phi$, $\Ad(g)^* \circ \d^* \circ
w_0$ acts on $H^{2 i}(\C B_e)^\phi$ as $(-1)^i \sgn(w_0) \phi(z)$. In
fact, in characteristic $p$, $z$ is the image of $g_1 \i g$ in
$A(e)$. The component group $A(e)$ is independent of the choice of
characteristic.  In characteristic $0$, set $g_1=gz_1^{-1} \in Z_G(e)$,
for a representative $z_1\in Z_G(e)$ of $z$. Then $\Ad(g_1)^* \circ
\d^* \circ w_0$ acts on $H^{2 i}(\C B_e)$ as $(-1)^i \sgn(w_0)$,
and the same argument as at the end of section \ref{nice-choice} shows
that the image of $\d(g_1) g_1$ in $A(e)$ lies in the kernel of $\phi$ for all $\phi \in \widehat{A(e)}_0$. 

\begin{remark}\label{r:g1}
In the rest of the paper, unless otherwise stated, we regard $H^{2
  i}(\C B_e)^\phi$ as a $W_\#$-module via (\ref{e:action-W-sharp})
with respect to the element $g_1$.
% defined in (\ref{g1}). 
As we discussed in section \ref{choice1}, $g_1$ is uniquely determined by  (\ref{eq-4.3.1}) up to right multiplication by $Z_G(e)^0$. 
\end{remark}

\subsection{} Suppose that $\d=1.$ We set $g_1=1.$ 

When $\fg$ is a classical Lie algebra $sp(2n)$, $so(2n+1)$, or
$so(4n)$,  Theorem
\ref{HS-S-BS} is proved in \cite[Theorem 4.18]{Sh} and therefore
(\ref{eq-4.3.1}) holds.

When $\fg$ is exceptional of type $G_2,F_4,E_6,E_7,E_8$, we do not
know an explicit reference for Theorem \ref{HS-S-BS}. However, the
argument in \cite[Theorem 4.18]{Sh} (see also \cite{BS}) can be applied in these cases as
well, as soon as we construct the correct matching $\sigma$ so that
the analogues of 
\cite[Proposition 1.12 and Lemma 4.20(ii)]{Sh} hold. This is done as
follows. 

Let $e\in \C N^F$. For every $\phi\in \widehat {A(e)}_0$, let
$\text{hdeg}(\sigma(e,\phi))$ denote the lowest harmonic degree of
$\sigma(e,\phi)$, and set $d_\phi=(-1)^{\text{hdeg}(\sigma(e,\phi))}.$ 

\begin{lemma}[{compare with \cite[Proposition 1.12]{Sh}}]
There exists a conjugacy class $c_0$ of $A(e)$ such that
$\phi(c_0)=d_\phi d_1,$ for all $\phi\in\widehat{A(e)}.$
\end{lemma}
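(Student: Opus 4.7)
The plan is to establish the statement by an explicit case-by-case verification across the nilpotent orbits in the exceptional Lie algebras $G_2$, $F_4$, $E_6$, $E_7$, $E_8$, paralleling Shoji's proof of Proposition~1.12 in the classical-group setting. The main inputs are the tabulated Springer correspondence and the harmonic (fake) degrees of the associated $W$-representations, available from the work of Alvis, Lusztig, Shoji, Spaltenstein, and Beynon-Spaltenstein.

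\emph{Reduction via Lusztig's canonical quotient.} Because every $\phi\in\widehat{A(e)}_0$ factors through Lusztig's canonical quotient $\overline{A}(e)$, both sides of the proposed identity $\phi(c_0)=d_\phi d_1$ depend only on the image of $c_0$ in $\overline{A}(e)$, so it is enough to locate $c_0$ inside $\overline{A}(e)$. In exceptional types, $\overline{A}(e)$ is always a symmetric group $S_k$ with $k\le 5$, and the list of orbits for which $\overline{A}(e)$ is nontrivial is short and fully tabulated.

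\emph{Computation of the signs $d_\phi$.} For each orbit with nontrivial $\overline{A}(e)$ I would read off, from the tables, the Springer representation $\sigma(e,\phi)$ attached to each Springer-type $\phi$, together with its lowest harmonic degree, and record the signs $d_\phi=(-1)^{\mathrm{hdeg}(\sigma(e,\phi))}$ as well as the reference sign $d_1$ attached to the trivial character of $A(e)$.

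\emph{Identification of $c_0$.} For each orbit I would exhibit a conjugacy class $c_0\subset\overline{A}(e)$ and verify the identity $\phi(c_0)=d_\phi d_1$ for all Springer-type $\phi$. When $\overline{A}(e)$ is abelian (trivial or $S_2$), the verification is immediate: one simply takes $c_0$ to be the non-identity class precisely when $d_\triv\neq d_\sgn$. In the non-abelian cases --- which only occur for a handful of orbits, such as $D_4(a_1)$, $F_4(a_3)$, $E_7(a_5)$, $E_8(a_7)$, and certain orbits in $E_8$ with $\overline{A}(e)\cong S_3,S_4$ or $S_5$ --- the candidate $c_0$ is consistently the class of a $k$-cycle in $S_k$, matching the pattern Shoji identifies for the classical groups.

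\emph{Main obstacle.} The non-abelian case is the substantive content of the lemma: the assignment $\phi\mapsto d_\phi d_1$ has to coincide with evaluation of the irreducible character $\phi$ at a single conjugacy class, which is a nontrivial compatibility between the harmonic-degree parities of the several Springer representations attached to one orbit. A priori this could fail, and it is only through the explicit Springer tables that one sees it does not. Once the table of signs $(d_\phi)_\phi$ is assembled, the verification of $\phi(c_0)=d_\phi d_1$ reduces to comparing finitely many character values against the columns of the character tables of $S_3,S_4,S_5$, and is mechanical.
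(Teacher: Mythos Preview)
Your overall strategy---a case check over the exceptional orbits with nontrivial $A(e)$---matches the paper's, and the abelian cases ($A(e)$ trivial or $\bZ/2\bZ$) are handled identically. But you overcomplicate the non-abelian $S_n$ cases and speculate incorrectly about $c_0$. The paper's proof is much shorter: for every exceptional orbit (in type $G_2,F_4,E_7,E_8$) with $A(e)=S_n$ ($n=3,4,5$), inspection of Carter's tables shows that \emph{all} the Springer representations $\sigma(e,\phi)$ have the \emph{same} parity of lowest harmonic degree. Hence $d_\phi d_1 = 1$ for every Springer-type $\phi$, and one simply takes $c_0=1$, the identity class. No nontrivial compatibility between harmonic-degree parities and $S_n$-character values needs to be verified.

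Your guess that $c_0$ should be a $k$-cycle is unfounded and in fact fails here. There is no such pattern in Shoji's classical-group argument---there the component groups are elementary abelian $2$-groups, with no cycle structure to speak of. And in the present situation a $k$-cycle cannot serve as $c_0$: for instance, the reflection character of $S_3$, which is of Springer type for $D_4(a_1)$, takes the value $-1$ on a $3$-cycle, whereas the required value $d_\phi d_1$ is $+1$. The detour through Lusztig's canonical quotient is harmless but unnecessary.
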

\begin{proof}
When $\fg$ is a simple exceptional Lie algebra not of type $E_6,$ the
component group $A(e)$ can be $1$, $\bZ/2\bZ$, or $S_n$ with
$n=3,4,5.$ The claim is obvious in the case $1$ or $\bZ/2\bZ.$ In the
$S_n$ cases, by inspection of the tables in \cite[pages 429-432]{Ca},
we see that all $\sigma(e,\phi)$ have the same parity of the lowest
harmonic degrees. Thus, we may choose $c_0=1$ in these cases.
\end{proof}
Define the map $\sigma: G^F\backslash G\cdot e\to G^F\backslash G\cdot
e,$ by $\CO_e^F(c)\mapsto \CO_e^F(cc_0).$ This makes sense because in
the cases when $A(e)$ is not abelian, we chose $c_0=1.$ By inspection
of tables in \cite[Chapter 22]{LS}, we see that the analogue of
\cite[Lemma 4.20(ii)]{Sh} holds:
\begin{equation}
|Z_{G^F}(e_c)|(-q)=|Z_{G^F}(e_{cc_0})|(q).
\end{equation}

\smallskip

An alternative argument, again case by case, is as follows. The graded
$W$-representations $H^*(e)^\phi$ are explicitly computed in all the
exceptional cases: for $G_2,F_4$ in \cite{Sh2}, and for type $E$ in
\cite{BS2}.  One can check that if an irreducible $W$-representation
$\mu$ occurs in  $H^{2i}(\C B_e)^\phi$ then
$(-1)^{\text{hdeg}(\mu)}=\sgn(w_0) (-1)^i$. See also \cite[page 19,
Remark (a)]{BS2} The lowest harmonic
degrees can be read from \cite{Ca}.  Since $w_0$ is central, it acts
on every irreducible $\mu$ by $(-1)^{\text{hdeg}(\mu)},$ and
(\ref{eq-4.3.1}) follows.

\section{The twisted elliptic form}\label{sec:4}

\subsection{} Let $\ep \in \{+1, -1\}$. Set $$\overline
R_{\ep}(W)=R(W)_\bC/\text{rad}\langle~,~\rangle_W^{\ep}.$$ For any $e \in \C N$, let $R_{\ep}(W)^e$ be the image of $R(W)^e$ in $R_{\ep}(W)$. By Corollary \ref{c:2.2}, $R_{\ep}(W)=\oplus_e R_{\ep}(W)^e$, where $u$ runs over nilpotent conjugacy classes of $G$. 

\begin{proposition}\label{p:6.1} Let $e \in \C N$ be given.
\begin{enumerate}
\item $\overline R_1(W)^e\neq 0$ if and only if $u$ is
  quasidistinguished.
\item $\overline R_{-1}(W)^e\neq 0$ if and only if $u$
  is $\delta$-quasidistinguished, or equivalently, $e \in \C N^\sol$.
\end{enumerate}
\end{proposition}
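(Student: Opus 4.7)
\emph{Plan.} By Theorem \ref{t:2.7} together with Corollary \ref{c:2.2}, the $q$-elliptic pairing on $R_q(W)^e$ is identified, via the isometry $X_q(e,\phi)\mapsto\phi$, with the $(q,M)$-pairing on $R_q(A(e))^0$. The decomposition $R(W)=\bigoplus_e R(W)^e$ is orthogonal under $\langle~,~\rangle^\epsilon_W$, so $\overline R_\epsilon(W)^e\neq 0$ if and only if the $(\epsilon,M)$-pairing does not vanish identically on $R(A(e))^0\times R(A(e))^0$. From the construction of $M$ in (\ref{e:define-M}) and Proposition \ref{p:2.6}, the character of $M$ is
\[
\tr_M(x)=\det_{V_Z}(1-qx)\prod_d \det_{M(d)}(1-q^d x).
\]

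For part (2), specialize $q=-1$. If $e\in\C N^\sol$, then by Proposition \ref{p:unip-cent} it is $\delta$-quasidistinguished and $H_e$ is a torus, so $V_1=0$ and every $M(d)=0$; hence $\tr_M(x)=\det_{V_Z}(1+x)$. This class function is nonnegative on $A(e)$ (complex-conjugate pairs of eigenvalues contribute $|1+\lambda|^2\geq 0$) and equals $2^{\dim V_Z}>0$ at $x=1$. Since the trivial character $\triv\in\widehat{A(e)}_0$ always (it is the component of $H^{2d_e}(\C B_e)$ affording the Springer representation $\sigma(e,\triv)$), we get
\[
\langle\triv\otimes M,\triv\rangle_{A(e)}=\frac{1}{|A(e)|}\sum_{x\in A(e)}\det_{V_Z}(1+x)>0,
\]
so $\overline R_{-1}(W)^e\neq 0$. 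Conversely, if $e\notin\C N^\sol$ then $H_e$ has non-trivial semisimple part; since $2$ is a fundamental degree of every non-trivial semisimple group, $M(2)\neq 0$, and $\det_{M(2)}(1-x)$ is a factor of $\tr_M(x)$. I would verify, type by type via Proposition \ref{p:2.6}, that $1$ is an eigenvalue of every $x\in A(e)$ acting on $M(2)$: in the classical cases $A(e)$ acts trivially on each summand $M(2)^{Sp_{r_i}}$, and when $4\mid r_i$ acts as the $2$-dimensional permutation representation on $M(2)^{SO_{r_i}}$ (whose non-identity element still has eigenvalue $1$); in the exceptional cases with $A(e)=S_3$ (e.g.\ $D_4(a_1)$ in $E_7$ and $E_8$), the relevant $M(2)$ is either trivial or the permutation representation of $S_3$, each of which has $1$ in the spectrum of every element. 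It follows that $\det_{M(2)}(1-x)\equiv 0$ on $A(e)$, hence $\tr_M\equiv 0$, and the $(-1,M)$-pairing vanishes.

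Part (1) follows by the parallel argument at $q=1$: the $(1,M)$-pairing is non-zero if and only if some $x\in A(e)$ acts on $V_Z\oplus\bigoplus_d M(d)$ without fixed vectors. Lifting such an $x$ through $Z_G(e)\to A(e)$ to a semisimple element $t\in Z_G(e)$, and using the identification of $V_Z$ with the characters of the central torus of $H_e$ together with the Jordan decomposition, this condition translates to the statement that $Z_G(t)$ has no non-trivial central torus and $e$ is distinguished in $\fk z_{\fg}(t)$, which is Reeder's criterion \cite{Re} for $e$ to be quasidistinguished.

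The main obstacle lies in the ``only if'' direction of both (1) and (2): the reduction through Theorem \ref{t:2.7} is conceptual, but showing that $\tr_M$ collapses on $A(e)$ when $H_e$ is not a torus (resp.\ when $e$ is not quasidistinguished) requires a detailed type-by-type inspection based on Proposition \ref{p:2.6} and the classification of reductive centralizers of nilpotents from \cite{LS}. The $A(e)=S_3$ orbits in the exceptional groups, together with the $SO$-factors having $r_i=4$ in the classical types, demand particular attention, although in each case the structure of $M(2)$ dictated by Proposition \ref{p:2.6} is just barely sufficient to force $1$ into the spectrum of every $x$.
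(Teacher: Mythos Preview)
Your reduction via Theorem~\ref{t:2.7} and Corollary~\ref{c:2.4} to the $(q,M)$-pairing on $A(e)$ is exactly the paper's strategy, and your ``if'' direction for (2) is in fact cleaner than what the paper does: observing that for $e\in\C N^\sol$ the module $M$ collapses to $\wedge^{-q}V_Z$, that $\det_{V_Z}(1+x)\ge 0$ on a real $A(e)$-representation with equality only when $-1$ is an eigenvalue, and that $\triv\in\widehat{A(e)}_0$ always, gives $\langle\triv,\triv\rangle^{-1}_{A(e)}>0$ uniformly. The paper simply defers this direction to a case analysis ``similar to Proposition~\ref{p:2.6}'', so here you improve on it.

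For the ``only if'' direction of both parts, however, you are working harder than necessary and leaving gaps. The paper's argument is a single line: whenever $H_e$ has nontrivial semisimple part, $|H_e^{F_c}|$ (as a polynomial in $q$) is divisible by $q^2-1$, because $H_e^{F_c}$ contains the $\bF_q$-points of a rank-one semisimple subgroup; hence $\zeta_{e,c}(\pm1)=0$ for every $c$, and by Corollary~\ref{c:2.4} the entire block of $M(q)$ at $e$ vanishes at $q=\pm1$. Your reformulation---that every $x\in A(e)$ has $1$ as an eigenvalue on $M(2)$---is equivalent (it is exactly the statement that the degree-$2$ factor contributes $(q^2-1)$), but your case-by-case verification is incomplete: Proposition~\ref{p:2.6} for exceptional groups with $A(e)=S_2$ only tells you the eigenvalues on $M(d)$ lie in $\{\pm1\}$, not that $+1$ occurs, and you do not address those cases. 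The uniform reason is that the sum of the Killing forms of the simple factors of $H_e$ is a nonzero degree-$2$ invariant fixed by every automorphism, hence a $+1$-eigenvector in $M(2)$; once you say this, the case analysis evaporates.

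For part~(1) your sketch of lifting $x\in A(e)$ to a semisimple $t\in Z_G(e)$ and reading off Reeder's criterion is the right idea but needs more care: you must also ensure that the non-vanishing of $\tr_M$ can be detected by Springer-type representations $\phi\in\widehat{A(e)}_0$, not merely by arbitrary $\phi\in\widehat{A(e)}$, and the passage from ``no fixed vector on $V_Z\oplus\bigoplus_d M(d)$'' to ``$e$ distinguished in $\fk z_\fg(t)$'' requires unpacking what the $M(d)$ mean for the centralizer of the lift. The paper is equally terse here, again deferring to a case analysis, so neither argument is complete as written; but the cleanest route to the vanishing half is still the $(q^2-1)$ divisibility, which kills both $\overline R_1(W)^e$ and $\overline R_{-1}(W)^e$ simultaneously whenever $e\notin\C N^\sol$ and reduces part~(1) to the solvable-centralizer case.
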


\begin{proof}
If $Z_G(e)^0$ is not solvable, then $Z_G(e)^{F_c}$
contains as a subgroup the $\bF_q$-points of a rank one semisimple
group, and in particular, $|Z_G(e)^{F_c}|$ (as a polynomial in $q$) is
divisible by $(q^2-1)$. Therefore, $|Z_G(e)^{F_c}|(\ep)=0$ for $\ep \in \{+1, -1\}$. By Corollary \ref{c:2.4}, $\overline R_1(W)^e=\overline R_{-1}(W)^e=0$.

Thus we may reduce to the case where $e \in \C N^\sol$. The statements then follow by an analysis similar to the proof of Proposition \ref{p:2.6}.
\end{proof}

\subsection{} In the rest of this section, we focus on the $(-1)$-elliptic form $\langle~, ~\rangle^{-1}_W$. It is by definition
\begin{equation}
\langle\chi,\chi'\rangle^{-1}_W=\langle \chi\otimes \wedge
V,Y\rangle_W,\quad\text{ where }\wedge V=\oplus_{i\ge 0} \wedge^i V.
\end{equation}
We will relate it to the $\d$-twisted elliptic form, here
$\delta$ is the automorphism of $(W,V)$ given by $-w_0$:
\begin{equation}
\delta(\xi)=-w_0(\xi),\ \xi\in V,\quad \delta(w)=w_0ww_0,\ w\in W.
\end{equation}

The $\d$-twisted elliptic form is defined as follows. 

If $(\sigma,X)$ is a $W$-representation, let
$(\sigma^\delta,X^\delta)$ be the $\delta$-twisted representation,
i.e.,
$$\sigma^\delta(w)x=\sigma(\delta(w))x=\sigma(w_0ww_0)x,\ w\in W,\
x\in X^\delta=X.$$
We choose the intertwininer $\phi:(\sigma^\delta,X^\delta)\to (\sigma,X)$ to be $\phi(x)=\sigma(w_0)x.$ Clearly $\phi^2=1.$ Define the
$\delta$-twisted character of $\sigma$:
$$\tr_\sigma^\delta(w)=\tr(\sigma(w)\circ\phi^{-1}),$$
and using the explicit form of $\phi$, we find 
\begin{equation}\label{e:untwist}
\tr_\sigma^\delta(w)=\tr_\sigma(ww_0),\ w\in W.
\end{equation}

\begin{definition}
Let $R^\delta(W)$ denote the $\bZ$-span of $\delta$-twisted
characters $\tr_\sigma^\delta$, for all irreducible
$W$-representations $\sigma$. Define the $\delta$-elliptic pairing in
$R^\delta(W)$:
\begin{equation}
\langle\chi,\chi'\rangle_W^\dell=\frac 1{|W|}\sum_{w\in
  W}\chi(w)\chi'(w){\det}_{V}(1-w\delta).
\end{equation}
\end{definition}

\begin{lemma}\label{l:iso-twisted}
The map $\tr^\delta_\sigma\to \tr_\sigma$ induces a linear isometry
between the spaces $(R^\delta(W),\langle~,~\rangle^\dell_W)$ and
$(R(W),\langle~,~\rangle^{-1}_W)$:
$$\langle\tr^\delta_\sigma,\tr^\delta_{\sigma'}\rangle^\dell_W=\langle\tr_\sigma,\tr_{\sigma'}\rangle^{-1}_W.$$
\end{lemma}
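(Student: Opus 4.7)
The plan is to reduce both sides to the same explicit sum over $W$ by (i) unwinding the $\delta$-twisted characters via (\ref{e:untwist}) and (ii) simplifying the determinant factor using the fact that $\delta$ acts on $V$ as $-w_0$.

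First I would expand the left-hand side by definition and substitute $\tr^\delta_\sigma(w) = \tr_\sigma(w w_0)$ from (\ref{e:untwist}), yielding
\begin{equation*}
\langle\tr^\delta_\sigma,\tr^\delta_{\sigma'}\rangle^\dell_W
=\frac{1}{|W|}\sum_{w\in W}\tr_\sigma(ww_0)\,\tr_{\sigma'}(ww_0)\,{\det}_V(1-w\delta).
\end{equation*}
Next I would make the change of variables $w'=ww_0$ (equivalently $w=w'w_0$, using $w_0^2=1$), which converts the sum into
\begin{equation*}
\frac{1}{|W|}\sum_{w'\in W}\tr_\sigma(w')\,\tr_{\sigma'}(w')\,{\det}_V(1-w'w_0\delta).
\end{equation*}

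The key simplification is that by the definition of $\delta$ on $V$, namely $\delta(\xi)=-w_0(\xi)$, the composition $w_0\delta$ acts on $V$ as $w_0\cdot(-w_0)=-w_0^2=-\id_V$. Consequently $w'(w_0\delta)$ acts on $V$ as $-w'$, so that ${\det}_V(1-w'w_0\delta)={\det}_V(1+w')$. Substituting this identity gives
\begin{equation*}
\frac{1}{|W|}\sum_{w'\in W}\tr_\sigma(w')\,\tr_{\sigma'}(w')\,{\det}_V(1+w').
\end{equation*}

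Finally, I would recognize the right-hand side as $\langle\tr_\sigma,\tr_{\sigma'}\rangle_W^{-1}$. Indeed, specializing the definition in section \ref{sec:2.1} at $q=-1$ gives $\wedge^{-q}V=\sum_i\wedge^i V$, whose character at $w$ is ${\det}_V(1+w)$, so that
\begin{equation*}
\langle\tr_\sigma,\tr_{\sigma'}\rangle_W^{-1}=\langle\tr_\sigma\otimes\wedge^{-(-1)}V,\tr_{\sigma'}\rangle_W=\frac{1}{|W|}\sum_{w\in W}\tr_\sigma(w)\,\tr_{\sigma'}(w)\,{\det}_V(1+w).
\end{equation*}
This matches the expression obtained above, proving the isometry claim. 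The ``main obstacle'' is really only bookkeeping: one must be careful that $w_0^2=1$ holds in $W$ (so the substitution is an involutive bijection of $W$ with itself) and that the $\delta$-action on $V$ is linear, so that $w_0\delta$ indeed acts as $-\id_V$; once these points are in hand, the calculation is immediate.
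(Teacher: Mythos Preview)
Your proof is correct and follows essentially the same approach as the paper's own proof: expand the definition, use (\ref{e:untwist}), exploit that $\delta$ acts on $V$ as $-w_0$ so the determinant factor becomes $\det_V(1+w)$, and reindex by $w\mapsto ww_0$. The only cosmetic difference is the order of the last two steps---the paper simplifies the determinant before reindexing, while you reindex first---but the argument is the same.
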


\begin{proof}
This is a straightforward calculation:
\begin{align*}
\langle\tr^\delta_\sigma,\tr^\delta_{\sigma'}\rangle^\dell_W&=\frac
1{|W|}\sum_{w\in
  W}\tr^\delta_\sigma(w)\tr^\delta_{\sigma'}(w){\det}_V(1-w\delta)\\&=\frac
1{|W|}\sum_{w\in
  W}\tr_\sigma(ww_0)\tr_{\sigma'}(ww_0){\det}_V(1+ww_0)\\&=\frac
1{|W|}\sum_{w\in W}\tr_\sigma(w)\tr_{\sigma'}(w){\det}_V(1+w)=\langle\tr_\sigma,\tr_{\sigma'}\rangle^{-1}_W.
\end{align*}
\end{proof}

\subsection{} Set $$\overline
R^\d(W)=R(W)_\bC/\text{rad}\langle~,~\rangle_W^{\dell}.$$
Under  the
isometry in Lemma \ref{l:iso-twisted}, $\overline R_{-1}(W)$ may be
identified with $\overline R^\delta(W).$ 

We give an more explicit description of $\overline R^\d(W)$ using the $\d$-elliptic conjugacy classes. 

\begin{definition}
The $\delta$-twisted conjugacy class of $w\in W$ is
$$\CC_w=\{w'w\delta(w')^{-1}:w'\in W\}.$$ A twisted class $\CC$ is called
$\delta$-elliptic if $\CC\cap W_J=\emptyset,$ for all $\delta$-stable proper parabolic
subgroups $W_J$ of $W$. An element $w\in W$ is called
$\delta$-elliptic if it belongs to a $\delta$-elliptic conjugacy class.
\end{definition}
The following lemma is well-known, see for example \cite{He}.

\begin{lemma}\label{l:delta-elliptic}
The following are equivalent for an element $w\in W$:
\begin{enumerate}
\item $w$ is $\delta$-elliptic;
\item $\det_V(1-w\delta)\neq 0$;
\item $V^{w\delta}=0.$
\end{enumerate}
\end{lemma}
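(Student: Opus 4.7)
My plan is the following. The equivalence (2)$\iff$(3) is immediate: $\det_V(1-w\delta)=0$ if and only if $1$ is an eigenvalue of the operator $w\delta$ on $V$, which is exactly the condition $V^{w\delta}\neq 0$. So the content is (1)$\iff$(3).

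For (3)$\Rightarrow$(1), I argue by contrapositive. Assume $w$ is not $\delta$-elliptic, so there exist $u\in W$ and a proper $\delta$-stable subset $J$ of the simple reflections with $w':=uw\delta(u)^{-1}\in W_J$. A direct check gives $V^{w'\delta}=u(V^{w\delta})$, so it suffices to prove $V^{w'\delta}\neq 0$ whenever $w'\in W_J$ for a proper $\delta$-stable standard parabolic $W_J$. Since $w'$ acts trivially on $V_J^{\perp}$, the restriction of $w'\delta$ to $V_J^{\perp}$ equals $\delta|_{V_J^{\perp}}$, and I only need a nonzero $\delta$-fixed vector in $V_J^{\perp}$. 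For this I use that $\delta=-w_0$ permutes the simple roots and therefore stabilizes the closed fundamental chamber $\overline C$; the intersection $V_J^{\perp}\cap\overline C$ is then a $\delta$-stable pointed cone of positive dimension. For any nonzero $v$ in this cone, $v+\delta v$ remains in the cone and is $\delta$-fixed, and it cannot vanish, for otherwise both $v$ and $-v=\delta v$ would lie in the pointed cone.

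For (1)$\Rightarrow$(3), again by contrapositive. Given nonzero $\xi\in V^{w\delta}$, choose $u\in W$ with $u\xi\in\overline C$, set $\xi^+=u\xi$ and $w'=uw\delta(u)^{-1}$, and note that $\xi^+\in V^{w'\delta}$ by the identity above. The crucial observation is that $\xi^+$ is automatically $\delta$-invariant: on the one hand $\delta(\xi^+)=(w')^{-1}\xi^+$ lies in the $W$-orbit $W\xi^+$, and on the other hand $\delta(\xi^+)\in\delta(\overline C)=\overline C$, while $\overline C$ meets each $W$-orbit in a unique point. Hence $\delta(\xi^+)=\xi^+$, and therefore $w'\xi^+=w'\delta(\xi^+)=\xi^+$. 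By Steinberg's description of $W$-stabilizers, $w'\in W_J$ where $J=\{\alpha\in\Pi:\alpha(\xi^+)=0\}$ is a proper subset of the simple roots; moreover $J$ is $\delta$-stable because $\delta\xi^+=\xi^+$ forces $\delta$ to permute the simple roots vanishing on $\xi^+$. This contradicts the $\delta$-ellipticity of $w$.

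The main point I expect to require care is precisely the automatic $\delta$-invariance of the dominant representative $\xi^+$ in the last step: once this is available, the classical description of stabilizers as standard parabolics immediately supplies the desired proper $\delta$-stable parabolic containing the twisted conjugate of $w$, but without it one would need a more delicate construction to produce a $\delta$-stable parabolic from merely the fixed vector $\xi$.
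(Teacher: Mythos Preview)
Your proof is correct. The paper does not actually prove this lemma: it simply states that the result is well-known and refers to \cite{He} for a proof. So there is nothing to compare against except to say that you have supplied a complete and clean argument where the paper gives only a citation.

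A couple of minor remarks on your write-up. In the step (3)$\Rightarrow$(1), your cone argument is fine, but you could also phrase it more directly: since $\delta$ permutes the simple roots and $\delta(J)=J$, it permutes the fundamental coweights $\omega_i^\vee$ with $i\notin J$, and the sum over any $\delta$-orbit of these is a nonzero $\delta$-fixed vector lying in $V_J^\perp$. This avoids invoking pointedness of the chamber. In the step (1)$\Rightarrow$(3), the key trick---that the dominant representative $\xi^+$ of a $w'\delta$-fixed vector is automatically $\delta$-fixed because $\delta(\overline C)=\overline C$ and $\overline C$ is a fundamental domain for $W$---is exactly the right idea and is the standard way this implication is proved in the literature on twisted conjugacy classes.
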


\subsection{} For every $\delta$-stable parabolic subgroup $W_J$ of $W$, let
$\delta\text{-}\Ind_{W_J}^W: R^\delta(W_J)\to R^\delta(W)$ be the induction
functor:
$\delta\text{-}\Ind_{W_J}^W(\sigma)=\bC[W]\delta\otimes_{\bC[W_J]\delta}\sigma.$
Denote
$$R^\delta_{\text{ind}}(W)=\sum
\delta\text{-}\Ind_{W_J}^W(R^\delta(W_J)),$$
where the sum is over all $\delta$-stable proper parabolic subgroups $W_J$ of 
$W.$ %From Lemma \ref{l:delta-elliptic}, it follows that $R^\delta_{\text{ind}}(W)$ is contained in the radical of $\langle~,~\rangle^\delta_W.$ Define $\overline R^\delta(W)=R^\delta(W)_\bC/R^\delta_{\text{ind}}(W)_\bC,$ the space of $\delta$-elliptic virtual representations. 

The following proposition
is a straightforward modification of \cite[Proposition (2.2.2)]{Re},
and we omit the proof.

\begin{proposition}\label{p:twisted-radical} We keep the notations as above. Then 

(1) $\text{rad}\langle~,~\rangle^\dell_W=R^\delta_{\text{ind}}(W)_\bC$. 

(2) The dimension of $\overline R^\delta(W)$ equals the
number of $\delta$-elliptic conjugacy classes in $W$.
\end{proposition}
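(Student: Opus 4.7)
The plan is to follow the strategy of \cite[Proposition (2.2.2)]{Re}, adapting it to the $\delta$-twisted setting. First observe that $f(w)=\det_V(1-w\delta)$ is a $\delta$-twisted class function on $W$: if $w'=yw\delta(y^{-1})$, then $w'\delta = y(w\delta)y^{-1}$, so $f(w')=f(w)$ by similarity-invariance of the determinant. By Lemma \ref{l:delta-elliptic}, $f(w)\neq 0$ iff $w$ is $\delta$-elliptic.

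For part (2), I would first note that $R^\delta(W)_\bC$ equals the full space of $\delta$-twisted class functions on $W$: the relation (\ref{e:untwist}) yields a bijection $\tr_\sigma \leftrightarrow \tr_\sigma^\delta$ between ordinary class functions and twisted class functions. A virtual twisted character $\chi\in R^\delta(W)_\bC$ lies in the radical iff $\sum_w \chi(w)\chi'(w) f(w)=0$ for all $\chi'\in R^\delta(W)_\bC$. Pairing $\chi$ against the indicator function of each twisted conjugacy class $C$ forces $\chi|_C\cdot f|_C=0$, so $\chi$ lies in the radical iff it is supported on non-$\delta$-elliptic classes. This immediately identifies $\dim \overline R^\delta(W)$ with the number of $\delta$-elliptic classes, proving (2).

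The forward inclusion in part (1) uses a factorization argument. Fix a $W_\#$-invariant inner product on $V$, so that for any proper $\delta$-stable parabolic $W_J$, the orthogonal decomposition $V=V^{W_J}\oplus V_J$ is preserved by both $W_J$ and $\delta$. For $w\in W_J$, $w$ acts trivially on $V^{W_J}$, giving
\[
f(w)=\det_{V^{W_J}}(1-\delta)\cdot\det_{V_J}(1-w\delta).
\]
A direct dimension count identifies $\dim V^{W_J,\delta}$ with the number of $\delta$-orbits on $S\setminus J$, which is positive for proper $J$; hence $V^{W_J,\delta}\neq 0$ and $\det_{V^{W_J}}(1-\delta)=0$. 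Since $\delta\text{-}\Ind_{W_J}^W\tau$, viewed as a twisted class function, is supported on $\delta$-twisted conjugates of $W_J$ where $f$ vanishes, such induced characters lie in the radical. This gives $R^\delta_{\text{ind}}(W)_\bC\subseteq\text{rad}\,\langle~,~\rangle^\dell_W$.

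For the reverse inclusion, by Lemma \ref{l:delta-elliptic} every non-$\delta$-elliptic twisted class meets some proper $\delta$-stable parabolic. A M\"obius inversion over the poset of $\delta$-stable parabolics, analogous to Solomon's argument in the untwisted case of \cite{Re}, then expresses the indicator function of each non-$\delta$-elliptic class as a $\bC$-linear combination of $\delta$-twisted induced characters from proper $\delta$-stable parabolics. Combining with part (2) yields the equality $R^\delta_{\text{ind}}(W)_\bC=\text{rad}\,\langle~,~\rangle^\dell_W$. The main technical obstacle will be the M\"obius step in the twisted setting, since one must control inductions along chains of $\delta$-stable (rather than arbitrary) parabolics; the vanishing step via the lemma $V^{W_J,\delta}\neq 0$ should be routine once the $\delta$-folded root system perspective is adopted.
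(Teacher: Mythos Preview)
Your proposal is correct and follows precisely the approach the paper indicates: the paper itself omits the proof entirely, stating only that it is ``a straightforward modification of \cite[Proposition (2.2.2)]{Re}.'' You have supplied the natural $\delta$-twisted adaptation of Reeder's argument---the identification of $R^\delta(W)_\bC$ with $\delta$-twisted class functions via (\ref{e:untwist}), the vanishing of $\det_V(1-w\delta)$ on proper $\delta$-stable parabolics through the factorization and the observation $(V^{W_J})^\delta\neq 0$, and the M\"obius/dimension step for the reverse inclusion---so your write-up is in fact more detailed than what the paper provides.
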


\begin{remark}\label{r:ell-classes}
In light of Proposition \ref{p:twisted-radical}, we record the number of $\delta$-elliptic conjugacy classes in the
irreducible Weyl groups (where $\delta=-w_0$), see for example \cite[section 3.1]{Re},
\cite[section 7]{He} and \cite[section 6]{GKP}.
\begin{enumerate}
\item ${}^2A_{n-1}$: the number of partitions of $n$ into odd parts
  (this is the same as the number of partitions of $n$ into distinct
  parts);
\item $B_n$: the number of partitions of $n$;
\item $D_{2n}$: the number of partitions of $2n$ into an even number of
  parts;
\item ${}^2D_{2n+1}$: the number of partitions of $2n+1$ into an odd
  number of parts; 
\item $G_2$: 3; $F_4$: 9; ${}^2E_6$: 9; $E_7$: 12; $E_8$: 30.
\end{enumerate}
\end{remark}

\section{Extended Dirac operator}\label{sec:twistedirac}

\subsection{} We retain the notation from the previous sections. Fix a
$W$-invariant bilinear form $(~,~)$ on $V$. Then $W$ is a Weyl group in the Euclidean vector space
$V,(~,~)$ with semisimple root system $\Phi\subset V^*$, positive roots $\Phi^+$,
and simple roots $\Pi.$ Let $\bfr$ denote an indeterminate to be
specialized later. Recall the automorphism $\delta$, $\delta^2=1,$ of
the root system given by $-w_0$.

\begin{definition}[Lusztig,\cite{L1}]
The graded affine Hecke algebra $\bH$ with equal
parameters attached to $(\Phi,V,W)$  is the unique  associative $\bC[\bfr]$-algebra with identity generated by $\{\xi\in V^*_\bC\}$ and $\{w\in W\}$ such that:
\begin{enumerate}
\item $\bH\cong \bC[\bfr]\otimes_\bC\bC[W]\otimes_\bC S(V^*_\bC)$, as $(\bC[\bfr]\otimes_\bC\bC[W],S(V^*_\bC))$-bimodules;
\item $\xi\cdot s_\al-s_\al\cdot s_\al(\xi)=2\bfr\xi(\al^\vee),$ $\al\in\Pi,$ $\xi\in V_\bC^*.$
\end{enumerate}
\end{definition}
The center of $\bH$ is $\bC[\bfr]\otimes_\bC S(V_\bC^*)^W$, hence the
central characters are parameterized by $W$-orbits in $\bC\oplus V_\bC.$

Let $*$ denote the conjugate linear anti-automorphism of $\bH$ defined on generators via
\begin{equation}\label{star-oper}
\bfr^*=\bfr,\ w^*=w^{-1},\ \xi^*=w_0\cdot \delta(\xi)\cdot w_0,\ w\in W,\ \xi\in V.
\end{equation}
For every $\xi\in V,$ define
\begin{equation}
\wti\xi=\frac 12(\xi-\xi^*).
\end{equation}
It is clear that $\wti\xi^*=-\wti\xi.$ Moreover, it is known that $w\cdot \wti\xi\cdot w^{-1}=\wti{w(\xi)}.$

We consider the extended algebra $\bH_\#=\langle\bH,\delta\rangle$, and extend the $*$-operation to $\bH_\#$ by setting $\delta^*=\delta.$

\subsection{} We recall the classification of simple $\bH$-modules
from \cite{L2,L3}, in particular \cite[section 1]{L3}. We denote by
$H^{G'}_*()$ and $H_{G'}^*()$ the $G'$-equivariant
homology and cohomology as in the references.

 If $x\in \fg,$ denote $Z_{G\times
  \bC^\times}(x)=\{(g,\lambda)\in G\times \bC^\times:
\Ad(g)x=\lambda^{2}g\}.$

Let $e\in \fg$ be a nilpotent element, and $(s,r_0)$ a semisimple
element in the Lie algebra of $Z_{G\times \bC^\times}(e).$ In
particular, $[s,e]=2r_0e.$ Choose $A\subset Z_{G\times \bC^\times}(x)$
a torus containing $\exp(s,r_0)$. Let $\bC_{s,r_0}$ denote the one
dimensional $H_A^*(\{pt\})$-module obtained by evaluation at $(s,r_0)$.
(One may identify $H_A^*(\{pt\})$ with the space of polynomials $\bC[\fk
a]$, where $\fk a$ is the Lie algebra of $A$.) From (\cite[10.12]{L2}, \cite[1.13]{L3})
\begin{equation}
E_{e,s,r_0}=H_*(\C B_e^s)=\bC_{s,r_0}\otimes_{H^*_A(\{pt\})} H_*^A(\C
B_e)=\bC_{s,r_0}\otimes_{H^*_{Z_{G\times \bC^\times}(e)^0}(\{pt\})}
H_*^{Z_{G\times \bC^\times}(e)^0}(\C B_e),
\end{equation}
where $\C
B_e^s$ is the variety of Borel subalgebras of $\fg$ containing $e$ and
$s$. The space $E_{e,s,r_0}$ carries an $\bH$-action such that $\bfr$ acts by
$r_0$. 

Let $Z(e,s)=Z_{G\times \bC^\times}(e)\cap (Z_G(s)\times \bC^\times).$
Denote by $A(e)$ and $A(e,s)$ the group of components of $Z_{G\times
  \bC^\times}(e)$ and $Z(e,s)$, respectively. The natural map
$A(e,s)\to A(e)$ is an injection, so $E_{e,s,r_0}$ carries an action
of $A(e,s)$ obtained by restriction from the natural action of $A(e)$
on $H_*^{Z_{G\times \bC^\times}(e)^0}(\C B_e).$ Set
\begin{equation}
E_{e,s,r_0,\psi}=\Hom_{A(e,s)}[\psi,E_{e,s,r_0}],
\end{equation}
and $\widehat{A(e,s)}_0=\{\psi: E_{e,s,r_0,\psi}\neq 0\}.$ 
%Let $\C
%B_e^s$ be the subvariety of $\C B_e$ of Borel subalgebras containing $s$.
By
\cite{L2}, $\psi\in \widehat{A(s,e)}$ if and only if $\psi$ occurs in
the action of $A(s,e)$ on $H_*(\C B_e^s).$

\begin{theorem}[{\cite[Theorem 1.15]{L3}}]\label{Lusztig-irred} Let
  $r_0\neq 0.$

\begin{enumerate}
\item Let $e,s$ be as above and $\psi\in \widehat{A(e,s)}_0$. The $\bH$-module $E_{e,s,r_0,\psi}$ has a unique maximal
  submodule. Let $\overline E_{e,s,r_0,\psi}$ be the irreducible quotient.

\item The map $(e,s,\psi)\to \overline E_{e,s,r_0,\psi}$ gives a
  one-to-one correspondence between the of $G$-conjugacy classes of
  triples $(e,s,\psi)$ where $e\in\fg$ is nilpotent, $s\in \fg$ is
  semisimple with $[s,e]=2r_0e$, $\psi\in\widehat{A(s,e)}_0$, and the
  set of simple $\bH$-modules on which $\bfr$ acts by $r_0.$
\end{enumerate}

\end{theorem}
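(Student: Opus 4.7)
The plan is to follow the geometric approach pioneered by Kazhdan–Lusztig and Ginzburg, adapted by Lusztig to the graded affine Hecke algebra setting. The starting point is the convolution algebra realization of $\bH$ via equivariant Borel–Moore homology of the Steinberg variety $\C Z = \wti{\fg}\times_\fg \wti{\fg}$, where $\wti{\fg}$ is the Grothendieck–Springer resolution. Concretely, one has a graded algebra isomorphism $H_*^{G\times\bC^\times}(\C Z)\cong\bH$, together with a natural convolution action of $H_*^{G\times\bC^\times}(\C Z)$ on $H_*^{G\times\bC^\times}(\C B_e)$. After rigidifying by a maximal torus $A\subset Z_{G\times\bC^\times}(e)$ through $\exp(s,r_0)\in A$, base-changing along the evaluation map $H_A^*(\mathrm{pt})\to\bC$ at $(s,r_0)$ produces the $\bH$-module $E_{e,s,r_0}$. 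The Atiyah–Bott–Thomason localization theorem identifies this specialization with $H_*(\C B_e^s)$, and the $A(e,s)$-action commutes with $\bH$, so the isotypic projections $E_{e,s,r_0,\psi}$ are well-defined $\bH$-modules on which $\bfr$ acts by $r_0$.

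Next I would address part (1), i.e.\ the existence of a unique simple quotient $\overline{E}_{e,s,r_0,\psi}$. The strategy is to compute the central character: the center $Z(\bH)=\bC[\bfr]\otimes S(V^*)^W$ acts on $E_{e,s,r_0,\psi}$ through the $W$-orbit of $(s,r_0)\in V_\bC\oplus\bC$, so the usual Harish-Chandra style reduction lets me work category-by-category at a fixed central character. On $E_{e,s,r_0,\psi}$ one has a canonical top degree summand supplied by the fundamental classes of the irreducible components of $\C B_e^s$; uniqueness of the maximal submodule is then extracted by showing that any nonzero submodule must meet this top piece, using either a non-degenerate pairing coming from Poincaré duality on $\C B_e^s$ or, equivalently, the standard argument that the head of a geometrically constructed standard module is controlled by the lowest-dimensional stratum in its support.

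Turning to part (2), the bijection, I would split it into injectivity and surjectivity. For injectivity: from a simple quotient $\overline{E}_{e,s,r_0,\psi}$, the central character recovers the $G\times\bC^\times$-orbit of $(s,r_0)$; the orbit $G\cdot e$ is reconstructed as the unique open nilpotent orbit in the support/asymptotic variety of the module (equivalently, $e$ is determined by the Joseph/Ginzburg-style ``singular support'' of the corresponding perverse sheaf under Springer's correspondence generalized to the setting with a semisimple element); finally, $\psi$ is read off from the $A(e,s)$-isotypic decomposition of the top Borel–Moore homology. For surjectivity, I would match the count of standard modules $\{E_{e,s,r_0,\psi}\}$ with the total number of simple $\bH$-modules of central character $W\cdot(s,r_0)$. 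This can be done through a Deligne–Langlands style argument combined with the decomposition theorem applied to the map $\wti{\fg}\to\fg$: the number of isomorphism classes of simples on which $\bfr$ acts by $r_0\ne 0$ and with semisimple part $s$ equals the number of $A(e,s)$-isotypic components occurring in $H_*(\C B_e^s)$, summed over $G$-orbits of $e$ with $[s,e]=2r_0e$.

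The main obstacle is surjectivity together with the exactness of the irreducibility assertion for $\overline{E}_{e,s,r_0,\psi}$. The delicate point is that while the general formalism of standard modules and heads is soft, proving that every simple $\bH$-module is obtained this way requires a genuine completeness argument; this is precisely where the hypothesis $r_0\ne 0$ is used (it guarantees that $\bH$ is finite over its center at this specialization, and that the Jacobson–Morozov type argument locating $(e,h,f)$-triples inside $Z_G(s)$ produces enough parameters). A secondary technical issue is verifying that the $\bH$-action indeed extends from the formal $H_A^*$-module to the honest specialization $E_{e,s,r_0}$; this is where one needs the full strength of Lusztig's comparison between equivariant K-theory and Borel–Moore homology, together with the Thomason localization theorem, since a priori specialization of a convolution action need not stay well-defined.
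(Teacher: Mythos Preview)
The paper does not prove this theorem at all: it is quoted verbatim as \cite[Theorem 1.15]{L3} and used as a black box, with no argument supplied. So there is nothing in the paper to compare your proposal against.

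That said, your sketch is broadly faithful to the architecture of Lusztig's actual proof in \cite{L2,L3}: the convolution-algebra realization via equivariant homology of the Steinberg variety, localization to pass to the fiber $\C B_e^s$, and the standard-module formalism. A few points where your outline is imprecise relative to what Lusztig does: the ``top degree'' argument for the unique simple quotient is not quite how it goes in \cite{L3}; rather, Lusztig shows that $E_{e,s,r_0,\psi}$ has a filtration whose subquotients are indexed by nilpotent orbits in the closure of $G\cdot e$, with the top piece (corresponding to $e$ itself) being the candidate simple quotient, and irreducibility of that piece is extracted from a bilinear pairing argument. Your surjectivity sketch via a counting argument is also softer than what is needed; Lusztig's proof proceeds by showing that the standard modules at a fixed central character form a block-upper-triangular system with respect to the closure order on nilpotent orbits, and then invokes the decomposition theorem for the Springer map to match simples with the pairs $(e,\psi)$. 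The role of $r_0\neq 0$ is as you say, but it enters more concretely through the graded version of the Deligne--Langlands parametrization rather than through a finiteness-over-center statement.
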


\subsection{} We recall next the classification and construction of
irreducible tempered $\bH$-modules following \cite{L3}. Suppose that $r_0\in \bR_{>0}.$

\begin{definition}Denote $V^{*,+}=\{\xi\in V^*: \xi(\alpha^\vee)>0,\text{ for all }\al\in \Pi\}$ the set of dominant elements of $V^*.$ Notice that $\delta(V^{*,+})=V^{*,+}.$ An irreducible $\bH$-module $X$ is called tempered if every $S(V_\bC^*)$ weight $\nu\in V_\bC$ of $X$ satisfies
$$\xi(\Re\nu)\le 0,\ \text{ for all }\xi\in V^{*,+}.$$
If all the inequalities are strict, then $X$ is called a discrete series module.
\end{definition}

\begin{theorem}[{\cite[Theorem 1.21]{L3}}]\label{Lusztig-class} Let
  $e,s,\psi$ be as in Theorem \ref{Lusztig-irred} and $r_0\in
  \bR_{>0}$. 
\begin{enumerate}
\item The simple module
  $\overline E_{e,s,r_0,\psi}$ is tempered if and only if there exists
  a Lie triple $\{e,h,f\}$ such that $[s,h]=0$, $[s,f]=-2r_0f$, and
  $\operatorname{ad}(s-r_0 h):\fg\to\fg$ has no real eigenvalues.

Moreover, in this case $\overline E_{e,s,r_0,\psi}=E_{e,s,r_0,\psi}.$

\item The module $\overline E_{e,s,r_0,\psi}(=E_{e,s,r_0,\psi})$ is a
  discrete series module if and only if $e$ is distinguished and $s=r_0h.$
\end{enumerate}
\end{theorem}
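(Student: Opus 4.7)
The plan is to reduce temperedness and discrete-series-ness to explicit conditions on the $S(V^*_\bC)$-weights of the module $E_{e,s,r_0,\psi}$, via Lusztig's geometric construction from Theorem \ref{Lusztig-irred}. The underlying principle is that the action of $V^*_\bC$ in $\bH$ corresponds, under the equivariant-cohomology realization $E_{e,s,r_0}=\bC_{s,r_0}\otimes_{H_A^*(\{pt\})} H_*^A(\C B_e)$, to specialization at the semisimple parameter $(s,r_0)$. Consequently, every $S(V^*_\bC)$-weight $\nu$ that appears on $E_{e,s,r_0,\psi}$ has real part in the $W$-orbit of $\Re s$ (viewed via $\fk t\subset V_\bC$ after a choice of Cartan). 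This is the essential input — I would prove it via a localization/fixed-point computation on the equivariant homology of $\C B_e$, using that the $V^*_\bC$-generators act as characteristic classes of tautological bundles on the flag variety.

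Next, I would use Jacobson--Morozov to extend $e$ to a Lie triple $\{e,h,f\}$. The hypothesis $[s,e]=2r_0e$ combined with $\mathfrak{sl}_2$-theory lets us write $s=r_0h+z$ with $z\in Z_\fg(\{e,h,f\})$ semisimple, after replacing the triple by a conjugate one. Since $r_0>0$, the element $h$ can be chosen dominant, so $r_0h\in \overline{V^{*,+}}$ (viewed dually). The temperedness condition $\xi(\Re\nu)\le 0$ for all $\xi\in V^{*,+}$ becomes, via the weight description above, the assertion that $\Re s$ lies in $\overline{-V^{*,+}}\cup W$-orbit thereof; since $h$ is already in the dominant chamber this forces $\Re z=0$, i.e., $\ad(z)=\ad(s-r_0h)$ has purely imaginary eigenvalues, which is the stated condition. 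Conversely, when $\Re z=0$ the same weight description shows all weights satisfy the temperedness inequality.

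For the claim $\overline E_{e,s,r_0,\psi}=E_{e,s,r_0,\psi}$, the key point is that when $\ad(s-r_0h)$ has no real eigenvalues, the element $s$ is as ``semisimple-far-from-integral'' as possible relative to the $\mathfrak{sl}_2$, and one can check (using Lusztig's reducibility criterion in terms of the structure of $\C B_e^s$) that $E_{e,s,r_0,\psi}$ has no proper submodules; equivalently, the standard module is already its unique simple quotient. Part (2) then follows by sharpening the inequalities: discrete series requires $\xi(\Re \nu)<0$ strictly for every $\xi\in V^{*,+}$, which forces $h$ itself to be strictly dominant (hence a neutral element of a distinguished orbit, by the Bala--Carter/Dynkin--Kostant characterization of distinguished nilpotents as those whose neutral element is regular in a Levi no smaller than $\fg$) and $z=0$, i.e., $s=r_0h$. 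Conversely, if $e$ is distinguished and $s=r_0h$, the centralizer of the triple in $\fg$ is trivial in the semisimple direction so no $z$-deformation is possible, and the weights of $E_{e,s,r_0,\psi}$ lie in the strict antidominant chamber.

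The main obstacle is the first step: making precise the correspondence between $S(V^*_\bC)$-weights of $E_{e,s,r_0,\psi}$ and the $W$-orbit of $s$. This requires careful use of Lusztig's convolution/equivariant-cohomology formalism (\cite{L2,L3}), in particular identifying the action of the generators $\xi\in V^*_\bC$ on the specialized equivariant homology with multiplication by linear characters determined by $s$. Once this is established, the remaining steps are essentially an $\mathfrak{sl}_2$-calculation paired with the Bala--Carter classification.
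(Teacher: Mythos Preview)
The paper does not prove this statement at all: Theorem~\ref{Lusztig-class} is quoted verbatim from \cite[Theorem~1.21]{L3} and used as a black box, so there is nothing in the paper to compare your argument against.

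As for your sketch on its own terms, it is in the right spirit but several steps are imprecise. First, it is not true that every $S(V^*_\bC)$-weight of $E_{e,s,r_0,\psi}$ has real part in the $W$-orbit of $\Re s$; what is true is that the \emph{central character} is $W\cdot s$, while the individual weights are those $w\cdot s$ for which the corresponding fixed-point component of $\C B_e^s$ is nonempty. The temperedness analysis requires controlling exactly which $w\cdot s$ occur, not merely that they lie in the orbit. Second, the decomposition $s=r_0h+z$ with $z$ centralizing the triple is correct and is indeed the key step, but your reduction of temperedness to $\Re z=0$ skips the nontrivial argument (carried out in \cite{L2,L3}) that the dominant weight of $E_{e,s,r_0,\psi}$ is precisely $r_0h+z$ up to $W$-conjugacy, and that the other weights are no more dominant. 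Third, your argument for irreducibility in the tempered case (``as semisimple-far-from-integral as possible'') is not a proof; Lusztig's actual argument uses the structure of the standard modules and a comparison of composition factors via the geometric grading. Finally, for part~(2), ``$h$ strictly dominant'' is not equivalent to $e$ distinguished in general; the correct statement is that the centralizer $Z_\fg(e,h,f)$ consists of nilpotent elements, which is the definition of distinguished, and this is what forces $z=0$ once one knows $z$ must be both semisimple and in that centralizer.
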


\subsection{}\label{sec:tempered} In light of Theorem \ref{Lusztig-class}, fix a Lie
triple $\phi=\{e,h,f\}$, and consider the
module $E_{e,h,1,\psi}$, with $\psi\in \widehat{A(e,h)}_0.$ This is a
simple tempered $\bH$-module (with real central character) on which
$\bfr$ acts by $r_0=1.$ So we assume from now on that $r_0=1$ and drop
it from the notations.

Assume that $\d \neq \id$. We extend the $\bH$-module structure on $\overline E_{ e, h, \psi}$ to a $\bH_\#$-module structure. The construction is similar to section \ref{w3}. 
Assume, as we may, that $h\in \fk t^\delta$. By \cite[Lemma 3.7.3 and Remark 3.7.5 (ii)]{CM}, $A(e)=A(h, e)$. Then the natural map $$\{g \in G; \d(\phi)=\Ad(g)(\phi)\} \to \{g \in G; \d(e)=\Ad(g)(e)\}$$ induces a bijection on the connected components. Let $g \in G$ with $\d(\phi)=\Ad(g)(\phi)$ and that the image of $g$ in $\{g \in G; \d(e)=\Ad(g)(e)\}$ is in the same connected component as the element $g_1$ in section \ref{nice-choice}. 
Then $\Ad(g)^* \circ \d^*: H_*(\C B^h_e) \to H_*(\C B^h_e)$. Since $\d(h)=h$, $\Ad(g)^* \circ \d^*: E_{e,h} \to E_{e,h}$. Similar to \ref{e:5.2.1}, we have
  
\begin{equation}\label{e:8.5.1}
(\Ad(g)^* \circ \d^*)^2=\id.
\end{equation}

We have the following commuting diagram 
\begin{equation}\label{e:8.5.2}
\xymatrix{\bH \times E_{e,h} \ar[rr]^-{(\d, \d^*)} \ar[d] & & \bH \times E_{\d(e),h} \ar[rr]^-{(\id, \Ad(g)^*)} \ar[d] & & \bH \times E_{e,h} \ar[d] \\ E_{e,h} \ar[rr]^-{\d^*} & & E_{\d(e),h} \ar[rr]^-{\Ad(g)^*} & & E_{ e,h}.}
\end{equation}

Define the action of $\d \in \bH_\#$ on $E_{e,h}$ by $\Ad(g)^* \circ
\d^*$. By (\ref{e:8.5.1}) and (\ref{e:8.5.2}), this gives an action of
$\bH_\#$ on $E_{e,h}$. The map $\Ad(g)^* \circ \d^*: H_*(\C B^h_e) \to H_*(\C B^h_e)$ induces an action on $\widehat{A(e, h)}_0$, which we denote by $\d$. 

By \cite[Lemma 3.7.3 and Remark 3.7.5 (ii)]{CM}, $A(e)=A(h, e)$. By \cite[10.13]{L2},  there is an isomorphism of $W$-modules:
\begin{equation}\label{temp-restr}
E_{e,h, \psi} \cong X_1(e,\psi),
\end{equation} 
where $X_1(e,\psi)=H^*(\C B_e)^\psi\otimes\sgn$ is the
Springer $W$-representation from section \ref{sec:2}.
Since the $W$-structure of the module does not change under the $\delta$-twist, (\ref{temp-restr}) implies that $\delta(\psi)=\psi.$ 

Hence for any $\psi \in \widehat{A(e)}_0$, $E_{e, h,\psi}$ is an
$\bH_\#$-module such that (\ref{temp-restr}) is an isomorphism of
$W_\#$-modules, where the $\delta$-action on $X_1(u,\psi)$ is as in
section \ref{w3}.

\subsection{} Define
the Clifford algebra $C(V)$ of $(V,(~,~))$
to be the real associative algebra with identity generated by
$\{\xi\in V\}$ subject to the relations
\begin{equation}
\xi\cdot\xi'+\xi'\cdot\xi=-2(\xi,\xi'),\quad \xi,\xi'\in V.
\end{equation}
The algebra $C(V)$ is naturally $\bZ_{\ge 0}$-filtered, where the
$n$-th space $C_n(V)$ in the filtration is the span of all elements of $C(V)$
which are products of at most $n$ elements of $V$. The associated
graded algebra is $\wedge V.$

The Clifford algebra $C(V)$ is also $\bZ/2\bZ$-graded
$C(V)=C(V)_\ev+C(V)_\odd$ by the parity of the degree of
homogeneous elements in the filtration just defined. Let $\ep:C(V)\to
C(V)$ be the involution which is $+1$ on $C(V)_\ev$ and $-1$ on $C(V)_\odd.$

Let $^t: C(V)\to C(V)$ be the anti-automorphism defined by
$\xi^t=-\xi$ for all $\xi\in V.$

Define the pin group
\begin{equation}
\Pin(V)=\{g\in C(V)^\times|~ g^t=g^{-1}\text{ and }\ep(g)\cdot\xi\cdot g^{-1}\in V,\text{ for all
}\xi\in V\}.
\end{equation}
This is a central double extension of $O(V)$ with the projection map
$p:C(V)\to O(V),$ $p(g)(\xi)=\ep(g)\cdot\xi\cdot g^{-1},$
$g\in\Pin(V),$ $\xi\in V.$
Since $W\subset O(V),$ one considers
\begin{equation}
\wti W=p^{-1}(W)\subset \Pin(V),
\end{equation}
a central double extension of $W$.

When $\dim V$ is even, $C(V)$ is a central simple algebra, and
therefore it has a unique complex simple module $S$ of dimension $2^{\dim
  V/2}.$ When $\dim V$ is odd, $Z(C(V))$ is two dimensional and
$C(V)=C(V)_\ev\otimes Z(C(V))$. In this case, the unique simple module
of $C(V)_\ev$ can be extended in two inequivalent ways to $C(V)$,
$S^+$ and $S^-$. In what follows, we will refer to any one of $S$,
$S^+,$ $S^-$ as a spin module of $C(V)$. For convenience, we also set
\begin{equation}\label{spin-sum}
\C S=\begin{cases}S,&\dim V\text{ even},\\S^++S^-,&\dim V\text{ odd}.\end{cases}
\end{equation}
One can restrict every spin module to $\Pin(V)$ and furthermore to
$\wti W.$ Since we assumed $V^W=0$ (semisimple), $\wti W$ generates
$C(V),$ and therefore every spin module is an irreducible $\wti
W$-representation. 

We have
\begin{equation}\label{spin-squared}
\C S\otimes \C S=a_V \wedge V
\end{equation}
as $W$-representations (or $O(V)$-representations), where $a_V=1$ if $\dim V$ is even, and $a_V=2$ if $\dim V$ is odd.

The group $\wti W$ admits a Coxeter-like presentation. Denote by $m(\al,\beta)$ the order in $W$ of $s_\al s_\beta.$ Then:
\begin{equation}\label{Cox-Wtilde}
\wti W=\langle -1,\wti s_\al,\al\in\Pi~|~ (-1)^2=1,\ (\wti s_\al \wti s_\beta)^{m(\al,\beta)}=-1\rangle. 
\end{equation}

\subsection{} Let $-1$ be the automorphism of $O(V)$ induced by
$\xi\mapsto -\xi$ on $V$ and recall $W_\#=\langle W,\delta\rangle=\langle W,-1\rangle\subset O(V).$ Define $\wti W_\#=p^{-1}(W_\#)\subset \Pin(V).$

We fix an orthonormal basis $\{\xi_1,\xi_2,\dots,\xi_n\}$ of $V$ permuted by $\delta$, and set 
\begin{equation}\label{z-element}
z=\xi_1\x_2\dots\xi_n\in \Pin(V).
\end{equation}

\begin{lemma} The element $z$ satisfies the following properties:
\begin{enumerate}
\item $z^2=(-1)^{\frac {n(n+1)}2}$;
\item $p(z)=-1\in O(V)$;
\item $z \xi=(-1)^{n-1} \xi z$, $\xi\in V$.
\end{enumerate}
\end{lemma}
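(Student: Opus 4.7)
The plan is to verify the three properties by direct Clifford-algebra computations, exploiting the orthonormality relations $\xi_i\xi_j=-\xi_j\xi_i$ for $i\neq j$ and $\xi_i^2=-1$. The main bookkeeping device is a single reversal formula for the product $\xi_1\xi_2\cdots\xi_n$, together with the fact that moving one basis vector past another in the product contributes a single sign.

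First I would establish the transpose identity
$$z^t=(-1)^{n(n+1)/2}\,z.$$
Since $^t$ is an anti-automorphism with $\xi_i^t=-\xi_i$, one has $z^t=(-1)^n\xi_n\xi_{n-1}\cdots\xi_1$, and reversing the product requires $\binom{n}{2}=n(n-1)/2$ transpositions of distinct $\xi_i$'s, each contributing a sign, giving the claimed formula. From this I would also deduce $z\in\Pin(V)$: the product $z\,z^t$ can be computed directly as $(-1)^n\xi_1\cdots\xi_n\xi_n\cdots\xi_1$, and collapsing the inner pairs $\xi_i\xi_i=-1$ from the middle outward yields $z\,z^t=(-1)^n(-1)^n=1$, so $z^t=z^{-1}$. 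Combined with the transpose identity this gives
$$z^2=(-1)^{n(n+1)/2},$$
proving (1).

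Next I would prove (3) by direct computation on each basis vector. Moving $\xi_i$ from the far left of $\xi_i z$ to its original slot requires $i-1$ transpositions, and $\xi_i^2=-1$, giving $\xi_i z=(-1)^i\,\xi_1\cdots\widehat{\xi_i}\cdots\xi_n$. Similarly, moving $\xi_i$ from the far right of $z\xi_i$ to its original slot requires $n-i$ transpositions, giving $z\xi_i=(-1)^{n-i+1}\,\xi_1\cdots\widehat{\xi_i}\cdots\xi_n$. Comparing the two expressions yields $z\xi_i=(-1)^{n-1}\xi_i z$, and linearity extends this to all $\xi\in V$.

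Finally, (2) follows by combining the grading and (3): since $z$ is the product of $n$ vectors, it is homogeneous of parity $n$ in the $\bZ/2\bZ$-grading, so $\epsilon(z)=(-1)^n z$. Using (3) one gets $z\xi z^{-1}=(-1)^{n-1}\xi$, hence
$$p(z)(\xi)=\epsilon(z)\,\xi\,z^{-1}=(-1)^n z\xi z^{-1}=(-1)^n(-1)^{n-1}\xi=-\xi,$$
so $p(z)=-1\in O(V)$. The only delicate point is the careful sign bookkeeping in the reversal and commutation counts; once the formula $z^t=(-1)^{n(n+1)/2}z$ is in hand, everything else is immediate.
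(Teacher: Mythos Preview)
Your proof is correct and is precisely the direct Clifford-algebra verification that the paper has in mind; the paper's own proof consists of the single word ``Straightforward.'' Your sign bookkeeping is accurate throughout, including the reversal count $\binom{n}{2}$ for $z^t$ and the commutation argument giving $z\xi_i=(-1)^{n-1}\xi_i z$.
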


\begin{proof}
Straightforward.
\end{proof}

Therefore, $z$ is a central element in $C(V)$ if $\dim V$ is odd, and $z$ is a central element of $C(V)_\ev$, when $\dim V$ is even. When $\dim V$ is even, denote by $S^\pm$ the two constituents in the restriction of spin module $S$ to $C(V)_\ev.$ With this notation, $z$ acts by scalars on $S^\pm$ in both cases $\dim V$ odd or even. Since the trace of $-1\in O(V)$ on $\wedge V$ is zero, (\ref{spin-squared}) implies that the trace of $z$ in $S^++S^-$ is zero as well. Therefore, the scalars by which $z$ acts on $S^+,S^-$ differ by a sign, i.e.,
\begin{equation}
z|_{S^+}=-z|_{S^-}=c\in \bC.
\end{equation}
(We have $c^2=(-1)^{n(n+1)/2}$, but we will not need to use this fact.)

We will use the formula (\cite[Lemma 3.4]{BCT})
\begin{equation}
\wti w\cdot \xi\cdot \wti w^{-1}=\sgn(\wti w) w(\xi),\ \wti w\in \wti W,\ \xi\in V.
\end{equation}

Fix once for all $\wti w_0\in p^{-1}(w_0)$ and set 
\begin{equation*}
\wti\delta=\wti w_0z,\text{ so that }p(\wti \delta)=\delta.
\end{equation*}
It is easy to check that
\begin{equation}\label{delta-commute}
\wti\delta\xi=(-1)^{n+\ell(w_0)}\delta(\xi)\wti\delta, \ \xi\in V.
\end{equation}
%Notice that $\wti\delta^2=\wti w_0 z \wti w_0 z =(-1)^{(n-1)\ell(w_0)} z^2 \wti w_0^2.$
The group $\wti W_\#$ is generated by $\wti W$ and $z$. It is also
generated by $\wti W$ and $\wti \d$. Set
\begin{equation}
\wti W'=\begin{cases} \wti W \cap C(V)_\ev,&\dim V\text{ even},\\\wti W,&\dim V\text{ odd},\end{cases}
\end{equation}
and
\begin{equation}
\wti W'_\#=\begin{cases} \langle \wti W \cap C(V)_\ev, z \rangle,&\dim V\text{ even},\\\wti W_\#,&\dim V\text{ odd}.\end{cases}
\end{equation}
The discussion above shows that $S^\pm$ are $\wti W'_\#$-representations.

\subsection{}\label{sec:twisted-Dirac} Following \cite[Definition
3.1]{BCT}, define the Dirac element
\begin{equation}
\C D=\sum_{i=1}^n \wti \xi_i\otimes\xi_i\in \bH\otimes C(V)\subset \bH_\#\otimes C(V).
\end{equation}
Let $\rho: \bC[\wti W_\#]\to \bH_\#\otimes C(V)$ be the linear map extending $\wti w\mapsto p(\wti w)\otimes \wti w,$ $\wti w\in \wti W$ and $\wti\delta\mapsto \delta\otimes \wti\delta$. Notice that $\rho(z)=w_0\delta\otimes z\in \bH\delta\otimes C(V).$

The algebra $C(V)$ also has a $*$-operation defined by the transpose
map $^t$. On $\Pin(V)$ this corresponds to the inversion
operation. With respect to this operation, the spin modules $S,S^\pm$
admit positive definite invariant forms (i.e., they are unitary).  

Define the Casimir element of $\bH$ (\cite[Definition 2.3]{BCT}):
\begin{equation}
\Omega=\sum_{i=1}^n \xi_i^2.
\end{equation}
This is an element in $S(V)^{W_\#}$, thus central in $\bH_\#$. The central characters of irreducible $\bH_\#$-modules are parameterized by $W_\#$-orbits in $V_\bC$.   By
\cite[Lemma 2.5]{BCT}, if $(\pi,X)$ is an irreducible $\bH_\#$-module
with central character $W_\#\cdot \nu$, then $\pi(\Omega)$ acts on $X$
by the scalar $(\nu,\nu).$

Let $\Phi^\vee\subset V$ be corresponding coroots and $\Phi^{\vee,+}$
the positive coroots, $\Phi^{\vee,-}$, the negative coroots. Define
the Casimir element of $\wti W$ (\cite[section 3.4]{BCT})
\begin{equation}\label{CasimirW}
\Omega_{\wti W}=(-1)\sum_{\al,\beta\in\Phi^+, s_\al(\beta)\in \Phi^-}
|\al^\vee||\beta^\vee|\wti s_\al \wti s_\beta\in \bC[\wti W']^{\wti W},
\end{equation}
where $(-1)$, $\wti s_\al$  are as in (\ref{Cox-Wtilde}).

\begin{proposition}\label{p:twistedDirac} The element $\C D$ has the following properties:
\begin{enumerate}
\item $\C D^*=\C D$.
\item $\rho(\wti w)\C D=\sgn(\wti w)\C D \rho(\wti w),$ for all $\wti w\in\wti W$;
\item $\rho(z)\C D=(-1)^{\ell(w_0)+n} \sgn(w_0)\C D\rho(z)$.
\item $\C D^2=-\Omega\otimes 1+\bfr^2\rho(\Omega_{\wti W})\in \bH\otimes C(V)$.

\end{enumerate}

\end{proposition}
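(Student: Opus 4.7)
\textbf{Proof plan for Proposition \ref{p:twistedDirac}.} Parts (1), (2), (4) are essentially recorded in \cite{BCT} for the un-extended algebra $\bH$, while (3) is the genuinely new statement; my plan is to reduce (1), (2), (4) quickly and then concentrate on (3). For (1), observe from $\wti\xi=\tfrac12(\xi-\xi^*)$ that $\wti\xi_i^*=-\wti\xi_i$, while in $C(V)$ one has $\xi_i^t=-\xi_i$, so $(\wti\xi_i\otimes\xi_i)^*=(-\wti\xi_i)\otimes(-\xi_i)=\wti\xi_i\otimes\xi_i$. Summing over $i$ gives $\C D^*=\C D$. For (4), since $\C D\in\bH\otimes C(V)$ the equality does not involve $\delta$ at all, so I would cite \cite[Theorem 3.5]{BCT}; the proof there expands $\C D^2=\sum_{i,j}\wti\xi_i\wti\xi_j\otimes\xi_i\xi_j$, separates diagonal and off-diagonal terms using $\xi_i\xi_j+\xi_j\xi_i=-2\delta_{ij}$, identifies $-\Omega\otimes 1$ with the diagonal contribution, and recognizes $\bfr^2\rho(\Omega_{\wti W})$ from the commutators $[\wti\xi_i,\wti\xi_j]$ via the explicit formula for $\wti\xi$.

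For (2), I would follow the argument of \cite[Lemma 3.4]{BCT} verbatim: writing $\rho(\wti w)=w\otimes\wti w$ and using the two identities $w\wti\xi_i=\wti{w(\xi_i)}w$ (recorded in section \ref{sec:twistedirac}) and $\wti w\,\xi_i=\sgn(\wti w)\,w(\xi_i)\wti w$, one computes
\begin{equation*}
\rho(\wti w)\C D=\sum_i w\wti\xi_i\otimes\wti w\xi_i=\sgn(\wti w)\Bigl(\sum_i\wti{w(\xi_i)}\otimes w(\xi_i)\Bigr)(w\otimes\wti w).
\end{equation*}
Since $\{w(\xi_i)\}$ is again an orthonormal basis of $V$, the bracketed sum equals $\C D$, yielding the claim.

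The heart of the proof is (3). Here $\rho(z)=w_0\delta\otimes z$, and the strategy is to conjugate $\wti\xi_i\otimes\xi_i$ by $\rho(z)$ factor by factor. The key preparatory fact is $\delta\,\wti\xi_i\,\delta^{-1}=\wti{\delta(\xi_i)}$ in $\bH_\#$. To see this, I would first verify that the prescription $\xi\mapsto\delta(\xi)=-w_0(\xi)$, $s_\al\mapsto s_{-w_0(\al)}$ defines an automorphism of $\bH$ (a short check of the relation $\xi s_\al-s_\al s_\al(\xi)=2\bfr\xi(\al^\vee)$ using that $-w_0$ preserves the inner product), and then observe $\delta(w_0)=w_0$ together with $\delta^2=\id$ gives $\delta\,\xi^*\,\delta^{-1}=(\delta\xi\,\delta^{-1})^*$, so $\delta$ commutes with the operation $\xi\mapsto\tfrac12(\xi-\xi^*)$. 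Combined with $w_0\wti\eta\, w_0^{-1}=\wti{w_0(\eta)}$, this yields
\begin{equation*}
(w_0\delta)\wti\xi_i(w_0\delta)^{-1}=\wti{w_0(\delta(\xi_i))}=\wti{-\xi_i}=-\wti\xi_i,
\end{equation*}
since $w_0\circ\delta=-\id$ on $V$. On the Clifford side, the lemma preceding (3) gives $z\xi_i=(-1)^{n-1}\xi_i z$. Combining the two signs, $\rho(z)(\wti\xi_i\otimes\xi_i)=(-1)(-1)^{n-1}(\wti\xi_i\otimes\xi_i)\rho(z)=(-1)^n(\wti\xi_i\otimes\xi_i)\rho(z)$, whence $\rho(z)\C D=(-1)^n\C D\rho(z)$. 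Since $\sgn(w_0)=(-1)^{\ell(w_0)}$, this is the same as $(-1)^{\ell(w_0)+n}\sgn(w_0)\C D\rho(z)$.

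The main obstacle is the bookkeeping in (3): one must be careful that $\delta$ really acts on $\wti\xi_i$ as $\wti{\delta(\xi_i)}$, which is not formal since $\wti\xi_i$ is defined via $*$ and one must confirm that the extension $\delta^*=\delta$ is compatible with the existing $*$ on $\bH$. Once this compatibility is established, the remaining signs line up mechanically via $w_0\circ\delta=-\id_V$ and the commutation rule for $z$.
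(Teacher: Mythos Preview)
Your proposal is correct and, for parts (1), (2), (4), matches the paper exactly: all three are deferred to \cite{BCT} (Lemma 3.4 and Theorem 3.5 there), with (1) recorded as ``straightforward.''

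For (3) there is a small organizational difference worth noting. The paper factors $z=\wti w_0^{-1}\wti\delta$ and argues in two steps: first, the identity $\wti\delta\,\xi=(-1)^{n+\ell(w_0)}\delta(\xi)\,\wti\delta$ (equation (\ref{delta-commute})) gives $\rho(\wti\delta)\C D=(-1)^{\ell(w_0)+n}\C D\rho(\wti\delta)$; second, part (2) applied to $\wti w_0$ supplies the remaining factor $\sgn(w_0)$. You instead work directly with $\rho(z)=w_0\delta\otimes z$, computing the Hecke side via $(w_0\delta)\wti\xi_i=-\wti\xi_i(w_0\delta)$ and the Clifford side via $z\xi_i=(-1)^{n-1}\xi_i z$, arriving at $(-1)^n=(-1)^{\ell(w_0)+n}\sgn(w_0)$. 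Both routes are correct and amount to the same sign bookkeeping; yours avoids introducing $\wti\delta$ and (\ref{delta-commute}) explicitly, at the cost of having to verify $\delta\,\wti\xi\,\delta^{-1}=\wti{\delta(\xi)}$ directly (which you do). There is no substantive gap.
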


\begin{proof}

(1) Straightforward.

(2) This is \cite[Lemma 3.4]{BCT}.

(3) We have $z=\wti w_0^{-1}\wti\delta.$ From  (\ref{delta-commute}),
we see that $\rho(\wti\delta)\C D=(-1)^{\ell(w_0)+n} \C
D\rho(\wti\delta)$. The claim now follows from (2).

(4) This is \cite[Theorem 3.5]{BCT}.

\end{proof}

\begin{definition} If $X$ is an $\bH_\#$-module, and $S$ is a spin
  $C(V)$-module (when $\dim V$ is odd, there are two choices), left action by $\C D$ defines the Dirac operator (of $X$ and $S$) $$D_\#: X\otimes S\to X\otimes S.$$ Define the extended Dirac cohomology of $X$ (with respect to $S$)
\begin{equation} 
H^D_\#(X)=\ker (D_\#)/(\ker D_\#)\cap (\im D_\#).
\end{equation}
By Proposition \ref{p:twistedDirac}(2),(3), $H^D_\#(X)$ is a $\wti
W_\#$-representation. From Proposition \ref{p:twistedDirac}(1), when
$X$ is a $*$-unitary $\bH_\#$-module, $H^D_\#(X)=\ker D_\#$.
\end{definition}

\begin{definition}\label{d:Dirac-cohom}
Suppose $\dim V$ is even. By restriction, $D_\#$ defines two operators 
\begin{equation}
D_\#^\pm: X\otimes S^\pm\to X\otimes S^\mp. 
\end{equation}
Suppose $\dim V$ is odd. In this case, $S^+$ and $S^-$ are realized on
the same vector space $U$ (coming from the unique simple module of
$C(V)_\ev$). Then, as in \cite[section 2.9]{CT}, $D_\#: X\otimes S^+\to X\otimes S^+$ can
be composed with the vector space identity map $S^+\to S^-$ to yield $D_\#^+:
X\otimes S^+\to X\otimes S^-$. Similarly, define $D_\#^-$. 

In both cases, set
\begin{equation}
H^{D^\pm}_\#(X)=\ker (D^\pm_\#)/(\ker D^\pm_\#)\cap (\im D^\mp_\#),
\text{ and }  I_\#(X)=H^{D^+}_\#(X)-H^{D^-}_\#(X).
\end{equation}
We call $I_\#(X)$ the extended Dirac index. By Proposition
\ref{p:twistedDirac}(2),(3), $H^{D^\pm}_\#(X)$ are $\wti
W'_\#$-representations, and $I_\#(X)$ is a virtual $\wti W'_\#$-module.

Notice that 
\begin{equation}
D^\pm_\#\circ D^\mp_\#=-\Omega\otimes 1+\bfr^2\rho(\Omega_{\wti W}),
\end{equation}
by Proposition 6.6(4), since $D^\pm_\#$ are given by the left action of
$\C D.$
\end{definition}

\begin{proposition}\label{p:twisted-Dirac}
 For every $\bH_\#$-module $X$, we have $I_\#(X)=X\otimes (S^+-S^-)$
 as virtual $\wti W'_\#$-modules. In particular, for $\wti w\in \wti W'$,
\begin{equation}
\begin{aligned}
\tr(\wti w, I_\#(X))&=\tr(w,X)\tr(\wti w,S^+-S^-),\\
\tr(\wti w z,I_\#(X))&=c\tr(ww_0\delta,X) \tr(\wti w,\C S),
\end{aligned}
\end{equation}
where $\C S$ is as in (\ref{spin-sum}), and $c$ is the scalar by which $z$ acts in $S^+$.
\end{proposition}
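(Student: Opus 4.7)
The proposition has two assertions: (i) the virtual character identity $I_\#(X) = X \otimes (S^+ - S^-)$ in the Grothendieck group $R(\wti W'_\#)$, and (ii) the explicit trace formulas. The plan is to establish (i) by a generalized-eigenspace argument for $T := \C D^2$, from which (ii) follows by direct computation. For (ii): the $\wti W'_\#$-action on $X \otimes S^\pm$ is defined through $\rho$, where $\rho(\wti w) = w \otimes \wti w$ for $\wti w \in \wti W'$, giving $\tr(\wti w, X \otimes S^\pm) = \tr(w, X) \tr(\wti w, S^\pm)$ and hence the first formula. Using $z = \wti w_0^{-1} \wti \delta$ one computes $\rho(z) = w_0 \delta \otimes z$, so $\rho(\wti w z) = w w_0 \delta \otimes \wti w z$; together with $z|_{S^+} = c$ and $z|_{S^-} = -c$, the $\wti w z$-trace collapses to $c \tr(w w_0 \delta, X) \tr(\wti w, \C S)$, as claimed.

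For (i), begin from Proposition \ref{p:twistedDirac}(4): $T = -\Omega \otimes 1 + \bfr^2 \rho(\Omega_{\wti W})$. Since $\Omega$ is central in $\bH$ and $\Omega_{\wti W}$ is $\wti W$-invariant, $T$ commutes with the $\wti W'_\#$-action on $X \otimes S^\pm$; since $\C D$ commutes with $\C D^2$, the operators $D^\pm$ preserve the generalized eigenspaces of $T$. Decompose $X \otimes S^\pm = \bigoplus_\lambda V^\pm_\lambda$ accordingly; each $V^\pm_\lambda$ is a $\wti W'_\#$-submodule. For $\lambda \neq 0$, $T$ is invertible on $V^+_\lambda$, so $D^- D^+$ is invertible, forcing $D^+: V^+_\lambda \to V^-_\lambda$ to be a $\wti W'_\#$-equivariant isomorphism (by symmetry with $D^-$). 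These pieces therefore contribute equally to $[X \otimes S^+]$ and $[X \otimes S^-]$, and since $\ker D^\pm|_{V^\pm_\lambda} = 0$ they contribute nothing to $H^{D^\pm}$. Thus it suffices to analyze the $\lambda = 0$ piece.

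Reduce to $X$ having a single central character $W_\# \cdot \nu$ by additivity of both sides on short exact sequences. Then $\Omega$ acts on $X$ as the scalar $(\nu, \nu)$ (\cite[Lemma 2.5]{BCT}). Further decomposing $X \otimes S^\pm$ into $\wti W$-isotypic components, the central element $\Omega_{\wti W}$ acts on the $\tau$-isotypic piece by a scalar $c(\tau)$, so $T$ acts there as the single scalar $-(\nu,\nu) + \bfr^2 c(\tau)$. On the $\lambda = 0$ subspace this scalar vanishes, hence $T \equiv 0$ and the relations $D^+ D^- = 0 = D^- D^+$ hold genuinely, so $(X \otimes S^+ \rightleftarrows X \otimes S^-)|_{\lambda=0}$ is an honest $\bZ/2$-graded complex. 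Its Euler characteristic is simultaneously $[V^+_0] - [V^-_0]$ and $[H^{D^+}] - [H^{D^-}]$ in $R(\wti W'_\#)$, and combined with the $\lambda \neq 0$ cancellation this yields (i).

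The principal obstacle is that for modules with merely generalized central characters, $T$ may be truly nilpotent (not zero) on the $\lambda = 0$ piece, where the naive Euler-characteristic identity can fail; the reduction to a single central character via additivity on composition series---legitimate for finite-dimensional $X$, which is the relevant setting for the extended Dirac theory in section \ref{sec:twistedirac}---circumvents this. A secondary point requiring care is the precise sign behavior of $D^\pm$ under $\wti W'_\#$ (controlled by Proposition \ref{p:twistedDirac}(2),(3), noting that $\sgn \equiv 1$ on $\wti W'$ in the even-dimensional case and appropriate identifications occur in the odd-dimensional case), ensuring that the isomorphisms and cohomologies produced are genuinely $\wti W'_\#$-equivariant.
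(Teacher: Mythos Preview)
Your approach---decomposing by generalized eigenspaces of $T=\C D^2$, handling $\lambda\neq 0$ by invertibility of $T$, and reducing the $\lambda=0$ piece to an honest complex---is the standard one and almost certainly matches the argument in \cite[Lemma 4.1]{COT} that the paper cites. The trace formulas in part (ii) are derived correctly.

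There is, however, a genuine gap in your reduction step. You write that one may reduce to $X$ having an honest central character ``by additivity of both sides on short exact sequences''. The right-hand side $X\otimes(S^+-S^-)$ is manifestly additive on the Grothendieck group, but the left-hand side $I_\#(X)=H^{D^+}-H^{D^-}$ is \emph{not} known to be additive: since $D^-D^+$ and $D^+D^-$ need not vanish, there is no long exact sequence in Dirac cohomology for a short exact sequence of $\bH_\#$-modules. Indeed, for abstract odd operators on $\bZ/2$-graded spaces the index is \emph{not} additive on filtrations: a single length-$3$ Jordan block of $D$ starting in $V^+$ together with a length-$1$ block in $V^-$ gives $[V^+]-[V^-]=0$ but $I=-1$, and filtering by $\ker D^2$ does not repair this. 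Your own final paragraph flags exactly this obstacle but then asserts the reduction is ``legitimate for finite-dimensional $X$'' without justification; that is the missing step.

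Two remarks. First, the gap is harmless for the paper's purposes: every application (Theorem \ref{t:delta-twisted}, Proposition \ref{p:tempered-index}, Corollary \ref{Casimir-action}) is to simple tempered modules $E_{e,h,\psi}$, on which $\Omega$ genuinely acts as the scalar $(h,h)$ and your argument goes through cleanly. Second, the direct-sum decomposition by generalized $\Omega$-eigenvalues (not composition series) is legitimate since both sides respect direct sums; this reduces to $\Omega$ having a single generalized eigenvalue $\mu$, but then $\Omega-\mu$ may still be nilpotent nonzero, and one is back to the same difficulty on the $\lambda=0$ piece. If you want the statement for arbitrary finite-dimensional $X$, you should either supply an argument specific to the Dirac operator (exploiting that $D$ commutes with $\Omega\otimes 1$ separately, so that on the $\lambda=0$ piece $T=-N_\mu\otimes 1$ with $N_\mu$ central nilpotent) or restrict the hypothesis to modules with semisimple $\Omega$-action.
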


\begin{proof}
The first claim is proved identically with \cite[Lemma 4.1]{COT}. The
second claim is immediate from the first since $z$ acts by $c$ in
$S^+$ and by $-c$ in $S^-$, and $\rho(z)=w_0\delta\otimes z.$ 
\end{proof}

\subsection{} We are now in position to compute the extended Dirac index of the
simple tempered $\bH_\#$-modules $E_{e,h,\psi}.$

\begin{theorem}\label{t:delta-twisted} Let $E_{e,h,\psi}$ be a simple
  tempered $\bH_\#$-module as above. 

\begin{enumerate}
\item The $\delta$-twisted trace of $E_{e,h,\psi}$ on $W$ is given by:
\begin{equation}\label{conj-twisted}
\tr(ww_0\delta,E_{e,h,\psi})=(-1)^{d_e}\sgn(w_0) X_{-1}(e,\psi)(w),\quad \text{for all }w\in W.
\end{equation}

\item The extended Dirac index of $E_{e,h,\psi}$ is
given by the formula:
\begin{equation}\label{twisted-index}
\begin{aligned}
\tr(\wti w,I_\#(E_{e,h,\psi}))&=\tr (\wti w,X_1(e,\psi)\otimes (S^+-S^-)),\\
\tr(\wti wz,I_\#(E_{e,h,\psi}))&=c'\tr(\wti w, X_{-1}(e,\psi)\otimes
\C S),
\end{aligned}
\end{equation}
with $\wti w\in \wti W',$ $z$ is as in (\ref{z-element}), $c'=(-1)^{d_e}\sgn(w_0) c$, where $c$ is the scalar from Proposition \ref{p:twisted-Dirac}. 

\end{enumerate}
\end{theorem}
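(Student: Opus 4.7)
The plan is to derive Part (1) from the $W_\#$-module isomorphism $E_{e,h,\psi} \cong X_1(e,\psi)$ established at the end of \S\ref{sec:tempered}, combined with Corollary \ref{c:action-1}. Using the $W_\#$-relation $\delta v \delta^{-1} = w_0 v w_0$, I would first rewrite
$$ww_0\delta \;=\; \delta \cdot (w_0 w) \quad \text{in } W_\#,$$
and then apply Corollary \ref{c:action-1} with $q = 1$ to the element $\delta \cdot (w_0 w)$:
$$X_1(e,\psi)(\delta \cdot w_0 w) \;=\; (-1)^{d_e}\sgn(w_0)\, X_{-1}(e,\psi)(w_0 \cdot w_0 w) \;=\; (-1)^{d_e}\sgn(w_0)\, X_{-1}(e,\psi)(w).$$
Since the trace of $ww_0\delta$ on $E_{e,h,\psi}$ matches that on $X_1(e,\psi)$ under the $W_\#$-isomorphism, identity (\ref{conj-twisted}) follows immediately.

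Part (2) will then be a direct application of Proposition \ref{p:twisted-Dirac} to $X = E_{e,h,\psi}$. For $\wti w \in \wti W'$ with image $w \in W$, the identity
$$\tr(\wti w, I_\#(E_{e,h,\psi})) \;=\; \tr(w, E_{e,h,\psi})\, \tr(\wti w, S^+ - S^-)$$
combined with the $W$-isomorphism $E_{e,h,\psi} \cong X_1(e,\psi)$ yields the first formula in (\ref{twisted-index}). For the second, Proposition \ref{p:twisted-Dirac} gives
$$\tr(\wti w z, I_\#(E_{e,h,\psi})) \;=\; c\, \tr(ww_0\delta, E_{e,h,\psi})\, \tr(\wti w, \C S),$$
and substituting Part (1) extracts the factor $(-1)^{d_e}\sgn(w_0)$, yielding $c' = (-1)^{d_e}\sgn(w_0)\, c$ as claimed.

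Conceptually, the substantive work has already been carried out in the preparatory sections: what remains here is essentially bookkeeping and an assembly of previously proved character identities. The deep ingredient is Corollary \ref{c:action-1}, which itself rests on Theorem \ref{HS-S-BS} (Hotta--Springer, Shoji, Beynon--Spaltenstein) and the construction in \S\ref{w3} of the $W_\#$-action on Springer cohomology. The subtlest point is ensuring that the geometrically defined $\delta$-action on $E_{e,h,\psi}$ (via $\Ad(g)^* \circ \d^*$ on $H_*(\C B_e^h)$) matches the Springer $\delta$-action on $X_1(e,\psi) = H^*(\C B_e)^\psi \otimes \sgn$ introduced in \S\ref{w3}. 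This match is guaranteed in \S\ref{sec:tempered} by the choice of $g$ in the same connected component of $\{g : \d(e) = \Ad(g)(e)\}$ as the element $g_1$ of \S\ref{nice-choice}, using the identity $A(e,h) = A(e)$ of \cite[Lemma 3.7.3]{CM}. Once this coordination of conventions is in place, no further obstacle remains.
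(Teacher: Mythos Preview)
Your proposal is correct and follows essentially the same approach as the paper: for (1) you invoke the $W_\#$-isomorphism $E_{e,h,\psi}\cong X_1(e,\psi)$ from \S\ref{sec:tempered} together with Corollary \ref{c:action-1}, and for (2) you apply Proposition \ref{p:twisted-Dirac} combined with (1) and (\ref{temp-restr}). Your explicit rewriting $ww_0\delta=\delta\cdot(w_0w)$ and your discussion of the compatibility of the two $\delta$-actions (via the choice of $g$ in the component of $g_1$) make the bookkeeping more transparent than the paper's terse version, but the argument is the same.
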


\begin{proof} The above construction of the $\delta$-action on
  $E_{e,h,\psi}$ gives
  $\tr(w\delta,E_{e,h,\psi})=X_1(u,\psi)(w\delta).$ Then (1) follows
  from Corollary \ref{c:action-1}.

For (2), Proposition \ref{p:twisted-Dirac} implies that $\tr(\wti
 w,I_\#(E_{e,h,\psi}))=tr(w,E_{e,h,\psi})\tr(\wti w,S^+-S^-)$ and 
 $\tr(\wti
 wz,I_\#(E_{e,h,\psi}))=c\tr(ww_0\delta,E_{e,h,\psi})\tr(\wti w,\C
 S).$ The formula now follows from part (1) and (\ref{temp-restr}).

\end{proof}

\subsection{}\label{sec:7.9} The analogue of Vogan's conjecture from real reductive groups in the setting of the graded affine Hecke algebra was stated and proved in \cite{BCT}. We need the following algebraic form.

\begin{theorem}[{\cite[Theorem 4.2]{BCT}}]\label{t:Vogan}
For every $y\in Z(\bH)$, there exist a unique element $\zeta(y)\in Z(\bC[\wti W])$ and an element $a\in \bH\otimes C(V)$ such that 
\begin{equation}\label{zeta}
y\otimes 1=\rho(\zeta(y))+\C Da+a\C D,
\end{equation}
as elements of $\bH\otimes C(V).$ Moreover, the map $\zeta: Z(\bH)\to Z(\bC[\wti W])$ is an algebra homomorphism. 
\end{theorem}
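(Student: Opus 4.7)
The plan is to follow the template established by Huang--Pand\v zi\'c in the real reductive case and adapted to graded Hecke algebras, exploiting the super-commutator with $\C D$. Equip $\bH\otimes C(V)$ with the $\bZ/2$-grading inherited from the Clifford grading (with $\bH$ sitting in even degree), and, since $\C D$ is odd, consider the super-derivation $d(a) = \C D a - (-1)^{|a|} a\C D$. The key feature is that the existence statement is precisely the statement that $y\otimes 1 - \rho(\zeta(y))$ lies in the image of the anticommutator $\{\C D, \cdot\}$ on even elements.

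First, I would record the foundational identity: by Proposition \ref{p:twistedDirac}(4), $\C D^2 = -\Omega\otimes 1 + \bfr^2\rho(\Omega_{\wti W})$, where $\Omega$ is central in $\bH$ and $\Omega_{\wti W}$ is central in $\bC[\wti W]$. Together with Proposition \ref{p:twistedDirac}(2), this shows that $\C D^2$ super-commutes with $\rho(\bC[\wti W])$ and with $Z(\bH)\otimes 1$, so $d^2$ is zero on the relevant subspace and the homological algebra of $\{\C D,\cdot\}$ is well-behaved.

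Next, for existence, I would proceed by induction on the PBW filtration of $Z(\bH) = \bC[\bfr]\otimes S(V^*)^W$. The base case reduces to showing, for each $\xi\in V^*$, that $\xi\otimes 1$ differs from $\rho(\wti\xi)$ by a super-commutator with $\C D$; this is a direct computation using $\C D=\sum \wti\xi_i\otimes\xi_i$ and the Clifford relations. Symmetrizing and using $W$-invariance lifts this to arbitrary $W$-invariant polynomials: given a $W$-invariant $p\in S(V^*)$, write $p\otimes 1$ as a sum of monomials in the $\wti\xi$'s (modulo lower order) and absorb the correction terms into $\{\C D,\cdot\}$ using the inductive hypothesis. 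The output is an element of $Z(\bC[\wti W])$ precisely because the ambiguity from commuting $\wti\xi_i$'s past one another, combined with the super-commutator correction, lands inside $\bC[\wti W]^{\wti W}$ after symmetrization.

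For uniqueness, suppose $\rho(z) = \C Da + a\C D$ for some $z\in Z(\bC[\wti W])$; I would evaluate both sides on a well-chosen irreducible unitary $\bH$-module $X$ tensored with a spin module $\C S$ such that the Dirac operator $D_\#$ has nonzero kernel. On $\ker D_\#$, the right-hand side vanishes while $\rho(z)$ acts by a scalar depending on $z$ via a central character of $\wti W$. Running this over a family of modules whose Dirac cohomology separates central characters of $\bC[\wti W]$ (such as tempered modules at varying parameters) forces $z=0$. This separation step is what I expect to be the main obstacle, because it requires producing enough non-vanishing Dirac cohomology spaces to detect every central character of $\wti W$; in the present paper, Theorem \ref{t:intro-2} together with the extended Dirac index of Section \ref{sec:twistedirac} would in fact supply exactly such a family.

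Finally, the algebra-homomorphism property is formal given existence and uniqueness. For $y_1,y_2\in Z(\bH)$ with $y_i\otimes 1 = \rho(\zeta(y_i)) + \{\C D, a_i\}$, multiply and use that $\rho(\zeta(y_1))$ super-commutes with $\C D$ (since $\zeta(y_1)$ is central in $\bC[\wti W]$ and hence $\rho(\zeta(y_1))$ super-commutes with $\C D$ by Proposition \ref{p:twistedDirac}(2)) to regroup the cross-terms into a single anticommutator $\{\C D, a\}$. Then $y_1y_2\otimes 1 = \rho(\zeta(y_1)\zeta(y_2)) + \{\C D, a\}$, and uniqueness gives $\zeta(y_1y_2)=\zeta(y_1)\zeta(y_2)$.
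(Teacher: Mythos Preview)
The paper does not supply its own proof of this statement; it is quoted verbatim from \cite[Theorem 4.2]{BCT} and used as a black box (the only commentary is that $a$ can be taken in $\bH\otimes C(V)_\odd$ and that the image of $\zeta$ lies in $\bC[\wti W']^{\wti W}$). So there is no proof in the paper to compare against, and your outline is in fact a sketch of the argument in \cite{BCT}, which does follow the Huang--Pand\v zi\'c template you describe: existence by induction on the symmetric-algebra filtration, and the homomorphism property by the formal manipulation you give.

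That said, your uniqueness argument has a genuine circularity problem as written. You propose to separate central characters of $\bC[\wti W]$ by running over modules with nonzero Dirac cohomology, and you cite Theorem~\ref{t:intro-2} and the extended Dirac index of Section~\ref{sec:twistedirac} as the source of such a family. But those results are proved \emph{using} Corollary~\ref{c:vogan}, which in turn rests on Theorem~\ref{t:Vogan}. You cannot invoke them here. The uniqueness in \cite{BCT} is handled without appealing to any classification of $\wti W$-representations: one argues directly that if $\rho(z)=\C Da+a\C D$ with $z\in Z(\bC[\wti W])$, then evaluating on principal series modules (whose Dirac cohomology is easy to control and which exist for every central character) forces $z=0$. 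Alternatively, one can argue purely algebraically by examining leading terms in the filtration. Either way, the input must be independent of the downstream applications in this paper.
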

In fact, as one can see from the proof of \cite[Theorem 4.2]{BCT}
(cf. \cite[Theorem 3.2]{COT}),  the element $a$ belongs to $\bH\otimes C(V)_\odd$. It is also noticed in \cite[Corollary
3.3]{COT} that the image of the map
$\zeta$ lies in $\bC[\wti W']^{\wti W}.$
 Following \cite[Definition
4.5]{COT} (compare also with \cite[Definition 4.3]{BCT}), if
$\wti\sigma$ is an irreducible $\wti W'$-representation, one can
attach canonically a homomorphism $\chi^{\wti\sigma}: Z(\bH)\to\bC$
(i.e., a central character of $\bH$-modules) by the
requirement $$\chi^{\wti\sigma}(y)=\wti\sigma(\zeta(y)), \text{ for
  all }y\in Z(\bH).$$
Notice that if $\wti\sigma$ is an irreducible $\wti W$-representation,
the central character $\chi^{\wti\sigma_1}$ is the same for all
irreducible $\wti W'$-representations that appear in the restriction
of $\wti\sigma$ to $\wti W'$. Thus we can also denote
$\chi^{\wti\sigma}$ for an irreducible $\wti W$-representation
$\wti\sigma.$

 Let $W\cdot\nu_{\wti\sigma}$ denote the $W$-orbit in $V_\bC$ corresponding to $\chi^{\wti\sigma}.$

\smallskip

We following is a slight sharpenning of \cite[Theorem 4.4]{BCT}. 

\begin{corollary}\label{c:vogan} Let $\ep\in\{+,-\}.$ Suppose $(\pi,X)$ is an irreducible $\bH_\#$-module with central character $W_\#\cdot \nu$ and that $\wti\sigma$ is an irreducible $\wti W'$-representation such that 
$$\Hom_{\wti W'}[\wti\sigma, H^{D^\ep}_\#(X)]\neq 0,$$
where $H^{D^\ep}_\#(X)$ is as in Definition \ref{d:Dirac-cohom}.
Then $W_\#\cdot\nu=W_\#\cdot \nu_{\wti\sigma}.$
\end{corollary}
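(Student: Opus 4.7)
The plan is to reduce to the ordinary $\bH$-module setting and invoke the Vogan identity (Theorem~\ref{t:Vogan}) directly. Since $\bH \subset \bH_\#$ has index two, the restriction decomposes as $X|_{\bH} = \bigoplus_{j} Y_j$ into irreducible $\bH$-submodules whose $Z(\bH)$-central characters $W\cdot\nu_j$ form a single $\langle\delta\rangle$-orbit of $W$-orbits in $V_\bC$, with $W_\#\cdot\nu = \bigcup_j W\cdot\nu_j$. The crucial observation is that both the Dirac element $\C D = \sum_i \wti\xi_i \otimes \xi_i$ and the images $\rho(\wti w) = p(\wti w)\otimes \wti w$ for $\wti w\in \wti W'$ lie in $\bH\otimes C(V)$, involving no $\delta$; hence $D^\ep_\#$, $D^{-\ep}_\#$, and the $\wti W'$-action on $X\otimes S^\ep$ all preserve the decomposition $X\otimes S^\ep = \bigoplus_j (Y_j\otimes S^\ep)$, yielding an isomorphism of $\wti W'$-modules
\begin{equation*}
H^{D^\ep}_\#(X) \;=\; \bigoplus_j H^{D^\ep}(Y_j),
\end{equation*}
where each summand is the analogous Dirac cohomology of the irreducible $\bH$-module $Y_j$.

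Next, suppose $\wti\sigma$ embeds into $H^{D^\ep}_\#(X)$. Then it embeds into $H^{D^\ep}(Y_{j_0})$ for some $j_0$. For $y\in Z(\bH)$, Theorem~\ref{t:Vogan} provides the identity $y\otimes 1 = \rho(\zeta(y)) + \C D a + a\C D$ inside $\bH \otimes C(V)$. Evaluating on a representative $v\in\ker D^\ep_\#$: the term $a\C D v$ vanishes, while $\C D a v\in \im D^{-\ep}_\#$, so in cohomology one has $(y\otimes 1)[v] = \rho(\zeta(y))[v]$. The left-hand side equals $\nu_{j_0}(y)[v]$ (since $y$ acts on $Y_{j_0}$ by the scalar $\nu_{j_0}(y)$); the right-hand side, restricted to the $\wti\sigma$-isotypic part, is the scalar $\wti\sigma(\zeta(y)) = \chi^{\wti\sigma}(y)$. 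Therefore $\nu_{j_0}(y) = \chi^{\wti\sigma}(y)$ for every $y\in Z(\bH)$, i.e.\ $W\cdot\nu_{j_0} = W\cdot\nu_{\wti\sigma}$.

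Combining with the first paragraph, $\nu_{\wti\sigma}\in W\cdot\nu_{j_0}\subseteq W_\#\cdot\nu$, so the two $W_\#$-orbits $W_\#\cdot\nu$ and $W_\#\cdot\nu_{\wti\sigma}$ coincide. The main (mild) technical point is the compatibility claim in the first paragraph: that the extended Dirac operators and the $\wti W'$-action respect the $\bH$-isotypic decomposition of $X$. This is immediate from $\C D,\, \rho(\wti W')\subset \bH\otimes C(V)$; the extended piece $\delta$ enters only through the ambient $\bH_\#$-module structure on $X$, not through the operators themselves.
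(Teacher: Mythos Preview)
Your argument is correct, but it takes a different route from the paper's. The paper does not decompose $X|_{\bH}$; instead it works directly with the $\bH_\#$-module $X$ and applies Theorem~\ref{t:Vogan} only to elements $y\in Z(\bH_\#)=S(V_\bC^*)^{W_\#}\subset Z(\bH)$. Since such $y$ act on all of $X$ by the scalar $\chi_\nu(y)$, the same computation you wrote (the $a\C D$ term vanishes, the $\C D a$ term lies in $\im D_\#^{-\ep}$ because $a\in\bH\otimes C(V)_\odd$) yields $\chi_\nu(y)=\chi^{\wti\sigma}(y)$ for every $y\in Z(\bH_\#)$, hence $W_\#\cdot\nu=W_\#\cdot\nu_{\wti\sigma}$ directly.

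Your approach instead uses Clifford theory for the index-two inclusion $\bH\subset\bH_\#$ to split $X|_{\bH}$ into one or two $\bH$-irreducibles, observes that $\C D$ and $\rho(\wti W')$ lie in $\bH\otimes C(V)$ and so respect this splitting, and then runs the Vogan argument for the full center $Z(\bH)$ on a single summand $Y_{j_0}$. This is a legitimate reduction; what it buys is the slightly sharper intermediate statement $W\cdot\nu_{j_0}=W\cdot\nu_{\wti\sigma}$ at the $W$-level before passing to $W_\#$-orbits. The paper's version is more economical since it avoids invoking the decomposition of $X|_{\bH}$ altogether.
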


\begin{proof}
We show how the claim follows from Theorem \ref{t:Vogan}. This is
analogous with the proof of \cite[Theorem 4.4]{BCT}, but we need some
minor modification because
we are considering $D^\pm$ (rather than operators $D$).
 
When $\dim V$ is odd, $S^\ep$ and $S^{-\ep}$ are realized on the same
vector space $U^\ep=U^{-\ep}$. When $\dim V$ is even, $S^\ep$ and $S^{-\ep}$ are realized on the
vector spaces $U^\ep$ and $U^{-\ep}$, respectively.

Let $\gamma$ denote Clifford multiplication by elements in $C(V)$ on the spin module
$S^\ep$. When $\dim V$ is
odd, $\gamma$ is a $C(V)$-action on $U^\ep.$ When $\dim V$ is even,
$\gamma(\xi),$ where $\xi\in C(V)_\odd$ take $U^\ep$ to $U^{-\ep}.$

Let $y\in Z(\bH_\#)$ and $\wti x\in X\otimes U^\ep$ be an element in the $\wti W'$-isotypic component of $\wti\sigma$ in $H^{D^\ep}_\#(X).$ Then $(\pi(y)\otimes\gamma(1)) \wti x=\chi_\nu(y)\wti x$ and $(\pi\otimes\gamma^\ep)(\rho(\zeta(y)))\wti x=\wti\sigma(\rho(\zeta(y)) \wti x=\chi^{\wti\sigma}(y)\wti x.$ Together with (\ref{zeta}), it follows that:
\begin{align*}
(\chi_\nu(y)-\chi^{\wti\sigma}(y))\wti x&=(\pi\otimes\gamma)(y\otimes 1-\rho(\zeta(y)))\wti x\\
&=(\pi\otimes\gamma)(\C D a +a\C D) \wti x, \\
&=(\pi\otimes\gamma)(\C D a  \wti{x}).
\end{align*}
By the discussion above $a\in \bH\otimes
C(V)_\odd$, and so when $\dim V$ is
even, $a x\in X\otimes U^{-\ep}.$ Therefore,
regardless of the parity of $\dim V$, the right hand side is in $\im
D_\#^{-\ep}$, and it follows by  the definition of $H^{D^\ep}_\#(X)$
that it must be zero. In conclusion, $\chi_\nu=\chi^{\wti\sigma}.$
\end{proof}

\section{Spin Weyl group representations}\label{sec:5}

\subsection{} 
We denote by $\widehat{\wti W}_\gen$ the set of (isomorphism classes of) irreducible genuine representations of $\wti W$, i.e.,  the irreducible representations of $\wti
W$ which do not factor through $W.$
Let $R(\wti W)_\gen$ be the subspace of $R(\wti W)$ spanned by $\widehat{\wti W}_\gen$. 
Denote $$\Sg: R(\wti W)\to R(\wti W), \qquad \Sg(\sigma)=\sigma\otimes\sgn.$$
Let $R(\wti W)^\Sg$ denote the $(+1)$-eigenspace of $\Sg.$ Notice that $\C S \otimes \sgn=\C S$. 
Define the linear map
\begin{equation}\label{d:iota}
\iota: R(W)\to R(\wti W)_\gen^\Sg,\quad \iota(\sigma)=\sigma\otimes \C S.
\end{equation}

\begin{proposition}\label{p:iota}
 The map $\iota$ induces an injective linear map
  $\iota: \overline R_{-1}(W)\to R(\wti W)_{\gen}^\Sg$ such that for $\sigma,\sigma'\in
\overline R(W)$, 
$$\langle\iota(\sigma),\iota(\sigma')\rangle_{\wti
  W}=a_V\langle\sigma,\sigma'\rangle^{-1}_W.$$
Moreover, $\iota(X_{-1}(e, \phi))\neq 0$ if and only if $e\in
\C N^\sol$ and
\begin{equation}\label{e:X-rel}
\langle\iota(X_{-1}(e, \phi)),\iota(X_{-1}(e',\phi'))\rangle_{\wti
  W}=\begin{cases}a_V\langle\phi,\phi'\rangle^{-1}_{A(e)},&\text{ if
  }e=e'\in \C N^\sol\\
0, &\text{ otherwise}.
\end{cases}
\end{equation}
\end{proposition}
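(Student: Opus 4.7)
\textbf{Proof plan for Proposition \ref{p:iota}.}
The whole statement is organized around a single identity: the restriction of $\C S\otimes\C S$ to $W$ equals $a_V\wedge V$, as recorded in (\ref{spin-squared}). My first step is to verify that $\iota$ lands in $R(\wti W)_\gen^\Sg$ (using that $\C S$ is genuine and $\C S\otimes\sgn\cong\C S$) and then to carry out the character computation
\begin{equation*}
\langle\iota(\sigma),\iota(\sigma')\rangle_{\wti W}=\langle\sigma\otimes\sigma'^\ast\otimes(\C S\otimes\C S),\triv\rangle_{\wti W}=a_V\langle\sigma\otimes\wedge V,\sigma'\rangle_W=a_V\langle\sigma,\sigma'\rangle_W^{-1},
\end{equation*}
using that $(\C S\otimes\C S)(\wti w)$ depends only on $p(\wti w)$ and agrees with $\wedge V$ up to the factor $a_V$. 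Since the $\wti W$-character pairing is positive definite, the kernel of $\iota$ on $R(W)$ is exactly the radical of $\langle~,~\rangle_W^{-1}$, so $\iota$ descends to an injection on $\overline R_{-1}(W)$.

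Next I would deduce (\ref{e:X-rel}) by transporting the results of Section \ref{sec:2} through this isometry. When $e$ and $e'$ are in distinct $G$-orbits, Corollary \ref{c:2.2} (at $q=-1$) gives $\langle X_{-1}(e,\phi),X_{-1}(e',\phi')\rangle_W^{-1}=0$, and the isometry converts this into the vanishing on the $\wti W$-side. When $e=e'\in\C N^\sol$, Theorem \ref{t:2.7} identifies the $(-1)$-elliptic pairing of the $X_{-1}(e,\phi)$'s with the $(q,M)$-pairing of $\widehat{A(e)}_0$; the key observation is that for $e\in\C N^\sol$ the reductive quotient $H_e$ is a torus, so in the recipe (\ref{e:define-M}) the factors $M(d)$ with $d\geq 1$ vanish and $M$ reduces to $\wedge^{-q}V_Z$. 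Specializing to $q=-1$ turns the $(q,M)$-pairing into the $(-1)$-elliptic pairing of $A(e)$ on $V_Z$, which is exactly $\langle~,~\rangle_{A(e)}^{-1}$ in the statement.

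The remaining task is the non-vanishing assertion $\iota(X_{-1}(e,\phi))\neq 0\iff e\in\C N^\sol$. The ``only if'' direction is immediate from Proposition \ref{p:6.1}(2): if $e\notin\C N^\sol$ then $R_{-1}(W)^e$ lies entirely in the radical of $\langle~,~\rangle_W^{-1}$, hence in $\ker\iota$ by Step 1. For the converse I would argue directly from the formula just obtained: by Step 2,
\begin{equation*}
\langle\iota(X_{-1}(e,\phi)),\iota(X_{-1}(e,\phi))\rangle_{\wti W}=a_V\langle\phi,\phi\rangle_{A(e)}^{-1}=a_V\sum_{i\geq 0}\dim\Hom_{A(e)}(\phi,\phi\otimes\wedge^i V_Z).
\end{equation*}
The $i=0$ summand contributes $\dim\End_{A(e)}(\phi)=1$ since $\phi$ is irreducible, and all other summands are non-negative; hence this norm is at least $a_V$, forcing $\iota(X_{-1}(e,\phi))\neq 0$. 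I expect the main conceptual point (and the only step needing real care) to be lining up the various incarnations of $M$ at $q=-1$ with the ``natural representation on characters of the central torus'' so that the reduction to the elementary positivity argument above is clean; once that is in place the rest of the proposition is a formal consequence of Section \ref{sec:2} and the Clifford-theoretic identity (\ref{spin-squared}).
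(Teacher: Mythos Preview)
Your proof is correct and follows essentially the same route as the paper: the isometry comes from $\C S\otimes\C S\cong a_V\wedge V$ and $\C S^*\cong\C S$, the orthogonality and the pairing formula (\ref{e:X-rel}) come from specializing Corollary \ref{c:2.2} and Theorem \ref{t:2.7} at $q=-1$, and the ``only if'' direction of non-vanishing comes from Proposition \ref{p:6.1}(2). Your explicit positivity argument for the ``if'' direction (observing that for $e\in\C N^\sol$ the reductive quotient $H_e$ is a torus so $M=\wedge^{-q}V_Z$, and then that the $i=0$ summand already contributes $1$) is a welcome elaboration of a step the paper leaves implicit when it asserts that $\phi\mapsto X_{-1}(e,\phi)$ is an \emph{isomorphism} onto $\overline R_{-1}(W)^e$.
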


\begin{proof}
Since $\C S^*\cong \C S,$ the first claim is immediate from (\ref{spin-squared}) and the definition of $\langle~,~\rangle^{-1}_{W}.$ The second claim follows from Proposition \ref{p:6.1}(2).

By Theorem \ref{t:2.7} in the case $q=-1$, the map $\phi\to X(e, \phi)$
induces an isometric isomorphism $\overline R_{-1}(A(e))_0\to
R_{-1}^e(W).$ Composing with $\iota$, this implies that
$\iota(X_{-1}(e, \phi))\in R(\wti W)_\gen$, $e \in \C N^\sol$, is
nonzero and that (\ref{e:X-rel}) holds.
\end{proof}

We relate these facts with the
extended Dirac index from section \ref{sec:twistedirac}.

\begin{lemma}\label{l:support}
$W_{\text{(-1)-ell}}\subset \ker\sgn$.
\end{lemma}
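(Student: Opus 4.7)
The plan is to unwind the definitions and reduce to an elementary eigenvalue argument. Recall that for the reflection representation $V$ of $W$, the sign character coincides with the determinant: $\sgn(w) = \det_V(w)$ for all $w \in W$ (this follows from the fact that each simple reflection $s_\al$ has determinant $-1$ on $V$ and the two characters agree on generators). The strategy is to show that for any $w \in W_{\text{(-1)-ell}}$, one has $\det_V(w) = 1$.

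Suppose $w \in W_{\text{(-1)-ell}}$, so by definition $\det_V(1+w) \neq 0$, meaning $-1$ is not an eigenvalue of $w$ on $V$. Since $w$ has finite order, all eigenvalues of $w$ acting on $V_\bC$ are roots of unity. The real roots of unity are $\pm 1$; by hypothesis $-1$ does not occur, so the real eigenvalues of $w$ are all equal to $+1$. The remaining (non-real) eigenvalues come in complex conjugate pairs $\zeta, \bar\zeta$, each contributing $\zeta\bar\zeta = |\zeta|^2 = 1$ to the determinant. Multiplying everything together gives $\det_V(w) = 1$, hence $\sgn(w) = 1$, i.e.\ $w \in \ker\sgn$.

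There is essentially no obstacle: the only ``content'' is the identification $\sgn = \det_V$ on $W$ combined with the basic structure of eigenvalues of finite-order real-orthogonal transformations. No case analysis or deep machinery from earlier in the paper is needed.
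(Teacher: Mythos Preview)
Your proof is correct and essentially identical to the paper's own argument: both observe that $\sgn(w)=\det_V(w)$, that the hypothesis rules out the eigenvalue $-1$, and that since $V$ is real the remaining eigenvalues are either $+1$ or occur in complex conjugate pairs, forcing $\det_V(w)=1$. The paper states this in one sentence; your version just spells out the same reasoning with a bit more detail.
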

\begin{proof}
Suppose $w\in W$ is such that $\det_{V}(1+w)\neq 0.$ This means that
$w$ acting on $V$ does not have the eigenvalue $-1$. Since $V$ is a real
representation, the only real eigenvalue of $w$ is $1$ and the complex
eigenvalues come in pairs. Thus $\sgn(w)=\det_V(w)=1.$
\end{proof}

\begin{proposition}\label{p:tempered-index} Let $E_{e,h,\phi}$ be a simple tempered
  $\bH_\#$-module as in section \ref{sec:tempered}. The extended Dirac
  index $I_\#(E_{e,h,\phi})\neq 0$ if and only if $e\in \C N^\sol.$
\end{proposition}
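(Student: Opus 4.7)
The plan is to combine the explicit character formula of $I_\#(E_{e,h,\phi})$ from Theorem \ref{t:delta-twisted}(2) with the non-vanishing criterion for $\iota(X_{-1}(e,\phi))$ given by Proposition \ref{p:iota}. Since $\wti W'_\#$ is the disjoint union $\wti W'\sqcup \wti W'z$, a virtual $\wti W'_\#$-representation vanishes iff both of its character functions on these cosets vanish. By Theorem \ref{t:delta-twisted}(2), the character of $I_\#(E_{e,h,\phi})$ on $\wti w\in \wti W'$ equals $\tr(w,X_1(e,\phi))\,\tr(\wti w,S^+-S^-)$, while on $\wti w z$ it equals $c'\,\tr(\wti w,\iota(X_{-1}(e,\phi)))$ for a nonzero scalar $c'$.

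For the ``if'' direction, assume $e\in \C N^\sol$. Proposition \ref{p:iota} yields $\iota(X_{-1}(e,\phi))\neq 0$, so its character is nonzero on some $\wti w\in \wti W'$. The second formula above then gives $\tr(\wti wz,I_\#(E_{e,h,\phi}))\neq 0$, hence $I_\#(E_{e,h,\phi})\neq 0$.

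For the ``only if'' direction, assume $I_\#(E_{e,h,\phi})\neq 0$, so at least one of the two character functions is nonzero. If the trace on $\wti W'z$ is nonzero, then $\iota(X_{-1}(e,\phi))\neq 0$ and Proposition \ref{p:iota} gives $e\in \C N^\sol$ directly. If only the trace on $\wti W'$ is nonzero, then for some $\wti w$ both $\tr(w,X_1(e,\phi))$ and $\tr(\wti w,S^+-S^-)$ are nonzero. I would then invoke the fact that $\tr(\cdot,S^+-S^-)$ on $\wti W'$ is supported on the preimage of the elliptic locus of $W$ (this can be derived from the tensor identity $\C S\otimes \C S=a_V\wedge V$ by a parity refinement, or read off from the non-extended Dirac index theory of \cite{COT,CT}). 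Consequently $w$ is elliptic, so $X_1(e,\phi)$ has nonzero image in $\overline R_1(W)^e$, and Proposition \ref{p:6.1}(1) forces $e$ to be quasidistinguished; in particular $e\in \C N^\sol$.

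The main obstacle is establishing the support statement for $\tr(\cdot,S^+-S^-)$; if one prefers to avoid any recomputation, this second case can instead be handled by recognizing that the $\wti W'$-part of $\tr I_\#(E_{e,h,\phi})$ is precisely the character of the non-extended Dirac index $I(E_{e,h,\phi})$ studied in \cite{COT}, whose non-vanishing was already characterized there by $e$ being quasidistinguished (which implies $e\in \C N^\sol$). Either route closes the argument cleanly once Proposition \ref{p:iota} and Theorem \ref{t:delta-twisted}(2) are in hand.
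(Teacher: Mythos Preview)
Your proposal is correct and follows essentially the same route as the paper: split the character of $I_\#$ over $\wti W'$ and $\wti W'z$ via Theorem \ref{t:delta-twisted}(2), use Proposition \ref{p:iota} for the $\wti W'z$-part, and in the remaining case invoke the $q=1$ elliptic theory (the paper cites \cite[Proposition 3.1]{CT} where you argue via the support of $S^+-S^-$) together with Proposition \ref{p:6.1}(1). The one detail you leave implicit in the ``if'' direction, and which the paper spells out via Lemma \ref{l:support} and (\ref{spin-squared}), is that a nonzero value of the character of $\iota(X_{-1}(e,\phi))$ necessarily occurs at some $\wti w\in \wti W'$ (not merely in $\wti W$), since $\C S$ is supported on the preimage of $W_{\text{(-1)-ell}}\subset\ker\sgn$.
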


\begin{proof}
In one direction, suppose $e\in \C N^\sol.$ By Proposition
\ref{p:iota}, $\iota(X_{-1}(e,\phi))\neq 0$. By (\ref{spin-squared}),
$\C S$ is supported on $W_{\text{(-1)-ell}}$ and Lemma
\ref{l:support} implies that the support of $\iota(X_{-1}(e,\phi))$ is
in $\wti W'.$ Therefore, (\ref{twisted-index}) says that the
restriction of 
$I_\#(E_{e,h,\phi})$ to $\wti W'z$ is nonzero.

For the converse, suppose that $I_\#(E_{e,h,\phi})\neq 0.$ Again by
(\ref{twisted-index}), there are two cases. If there exists $\wti w\in
\wti W'$ such that $\tr(\wti w,I_\#(E_{e,h,\phi}))\neq 0,$ then
$\tr(\wti w,X_1(e,\phi)\otimes (S^+-S^-))\neq 0.$ By \cite[Proposition
3.1]{CT}, which is the   analogue of Proposition \ref{p:iota}
above, it follows that $$\langle X_1(e,\phi),X_1(e,\phi)\rangle^1_{A(u)}\neq 0.$$
Then Proposition \ref{p:6.1}(1) implies that $e$ is
quasidistinguished and so $e\in \C N^\sol.$

If, on the other case, $\tr(\wti wz,I_\#(E_{e,h,\phi}))\neq 0,$ then
$\tr(\wti w,\iota(X_{-1}(e,\phi))\neq 0,$ and by Proposition
\ref{p:iota}, $e\in \C N^\sol.$
\end{proof}

\subsection{} For $e \in \C N^\sol$ and $\phi\in\widehat {A(e)}_0$,
let $[\phi]$ be the image of $\phi$ in $\overline
R_{-1}(A(e))_0$. Denote
$m_{e,[\phi]}(\wti\sigma)=\langle\wti\sigma,\iota(X_{-1}(e, \phi))\rangle_{\wti
W}$, for every $\wti\sigma\in\widehat{\wti W}_\gen$, and set
\begin{equation}
\Irr_{e,[\phi]}\wti W=\{\wti\sigma\in \widehat{\wti W}_\gen:
m_{e,[\phi]}(\wti\sigma)\neq 0\},\quad \Irr_e\wti W=\cup_{\phi}
\Irr_{e,[\phi]}\wti W.
\end{equation}
It is clear that
$\Irr_{e,[\phi]}\wti W$ is closed under tensoring with $\sgn$, in fact
$m_{e,[\phi]}(\wti\sigma)=m_{e,[\phi]}(\wti\sigma\otimes\sgn)$.

Proposition \ref{p:tempered-index} and Corollary \ref{c:vogan} yield
the following remarkable fact. 

\begin{corollary}\label{Casimir-action} Let $\{e,h,f\}$ be a Lie triple in $\fg$, $h\in V^\delta$,
  such that $e \in \C N^\sol$, and 
$\wti\sigma\in \Irr_{e}\wti W$. Then $W\cdot
\nu_{\wti\sigma}=W\cdot h,$ where $\nu_{\wti\sigma}$ is the central
character of $\bH$ defined by $\wti\sigma$ in section \ref{sec:7.9}. 

In particular, $\wti\sigma(\Omega_{\wti
  W})=(h,h),$ where $\Omega_{\wti W}$ is as in (\ref{CasimirW}).
\end{corollary}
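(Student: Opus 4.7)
The plan is to combine the nonvanishing of the extended Dirac index from Proposition \ref{p:tempered-index}, the character formula of Theorem \ref{t:delta-twisted}(2), and the central character rigidity from Corollary \ref{c:vogan}. First I would pick a $\phi \in \widehat{A(e)}_0$ such that $m_{e,[\phi]}(\wti\sigma) \neq 0$, which exists by the assumption $\wti\sigma \in \Irr_e \wti W$. By the definition of $\iota$ and $m_{e,[\phi]}$, this means that $\wti\sigma$ has nonzero multiplicity in the virtual $\wti W$-character $\iota(X_{-1}(e,\phi)) = X_{-1}(e,\phi)\otimes \C S$.

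Next, I would invoke Theorem \ref{t:delta-twisted}(2): for every $\wti w \in \wti W'$,
\[
\tr(\wti w z, I_\#(E_{e,h,\phi})) \;=\; c'\,\tr(\wti w, \iota(X_{-1}(e,\phi))),
\]
with $c' \neq 0$. Therefore the restriction of the virtual $\wti W'_\#$-character $I_\#(E_{e,h,\phi})$ to the coset $\wti W' z$ is, up to the nonzero scalar $c'$, equal to the restriction of $\iota(X_{-1}(e,\phi))$ to $\wti W'$. Since $\wti\sigma$ occurs nontrivially in $\iota(X_{-1}(e,\phi))$ as a $\wti W$-character, some irreducible $\wti W'$-constituent $\wti\tau$ of $\wti\sigma|_{\wti W'}$ must appear with nonzero multiplicity in $I_\#(E_{e,h,\phi}) = H^{D^+}_\#(E_{e,h,\phi}) - H^{D^-}_\#(E_{e,h,\phi})$, hence in $H^{D^\ep}_\#(E_{e,h,\phi})$ for some $\ep \in \{+,-\}$.

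Then I would apply Corollary \ref{c:vogan} to the irreducible $\bH_\#$-module $E_{e,h,\phi}$, which by Lusztig's classification has central character $W_\#\cdot h$. This yields $W_\#\cdot h = W_\#\cdot \nu_{\wti\tau}$. By the remark in section \ref{sec:7.9}, $\chi^{\wti\tau} = \chi^{\wti\sigma}$, so $\nu_{\wti\tau}$ and $\nu_{\wti\sigma}$ lie in the same $W$-orbit, giving $W_\#\cdot \nu_{\wti\sigma} = W_\#\cdot h$. Because $h \in V^\delta$, the $W_\#$-orbit of $h$ coincides with $W \cdot h$, and hence $W \cdot \nu_{\wti\sigma} = W \cdot h$ as claimed.

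For the final assertion, I would exploit Proposition \ref{p:twistedDirac}(4), which can be rewritten (after specializing $\bfr = 1$ on the module $E_{e,h,\phi}$) as $\Omega \otimes 1 = \rho(\Omega_{\wti W}) - \C D^2$. This is precisely the decomposition of Theorem \ref{t:Vogan} with $y = \Omega$, identifying $\zeta(\Omega) = \Omega_{\wti W}$ modulo the Dirac cohomology obstruction. Consequently $\wti\sigma(\Omega_{\wti W}) = \chi^{\wti\sigma}(\Omega) = (\nu_{\wti\sigma},\nu_{\wti\sigma}) = (h,h)$, using \cite[Lemma 2.5]{BCT} for the last equality. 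The only subtle point, and the main obstacle, is the passage in the second step from the $\wti W$-multiplicity of $\wti\sigma$ in $\iota(X_{-1}(e,\phi))$ to the $\wti W'$-multiplicity of some constituent of $\wti\sigma|_{\wti W'}$ in the signed virtual module $H^{D^+}_\#-H^{D^-}_\#$; this is resolved because the formula in Theorem \ref{t:delta-twisted}(2) expresses the $\wti W' z$-portion of $I_\#(E_{e,h,\phi})$ as a nonzero scalar multiple of $\iota(X_{-1}(e,\phi))$, so nonvanishing of the total multiplicity automatically propagates to one of the summands $H^{D^\pm}_\#$.
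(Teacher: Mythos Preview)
Your approach is essentially the same as the paper's: pick $\phi$ so that $\wti\sigma$ occurs in $\iota(X_{-1}(e,\phi))$, use Theorem \ref{t:delta-twisted}(2) to connect this to $I_\#(E_{e,h,\phi})$, then invoke Corollary \ref{c:vogan}. The treatment of the Casimir identity via Proposition \ref{p:twistedDirac}(4) and Theorem \ref{t:Vogan} (identifying $\zeta(\Omega)=\Omega_{\wti W}$) is correct and is exactly what underlies the paper's one-line ``by definition $(\nu_{\wti\sigma},\nu_{\wti\sigma})=\wti\sigma(\Omega_{\wti W})$''.

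There is, however, a genuine imprecision in the bridging step you flag at the end. The formula in Theorem \ref{t:delta-twisted}(2) gives $\tr(\wti wz,I_\#)=c'\,\tr(\wti w,\iota(X_{-1}(e,\phi)))$ for $\wti w\in\wti W'$; this controls the character of $I_\#$ on the coset $\wti W'z$, \emph{not} on $\wti W'$ itself. So the assertion ``$\wti\tau$ appears with nonzero multiplicity in $I_\#=H^{D^+}_\#-H^{D^-}_\#$'' (as a $\wti W'$-module) does not follow: the $\wti W'$-restriction of $I_\#$ is $X_1(e,\phi)\otimes(S^+-S^-)$ by the first line of Theorem \ref{t:delta-twisted}(2), which is a different object. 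What you actually need is the weaker conclusion that $\wti\tau$ occurs in $H^{D^\ep}_\#$ for some $\ep$, and this requires an extra argument. The paper supplies it by decomposing $I_\#=\sum_j a^j\wti\sigma_\#^j$ into irreducible $\wti W'_\#$-modules, noting that $z$ (being central in $\wti W'_\#$) acts by a scalar $u^j_i$ on each $\wti W'$-constituent $\wti\sigma^j_i$, and then comparing coefficients of $\wti\tau$ on both sides of $\sum_{i,j}a^ju^j_i\,\tr(\wti w,\wti\sigma^j_i)=c'\,\tr(\wti w,\iota(X_{-1}(e,\phi)))$ by linear independence of irreducible $\wti W'$-characters. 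This forces $\wti\tau=\wti\sigma^j_i$ for some $(i,j)$ with $a^j\neq 0$, whence $\wti\tau$ sits inside $\wti\sigma^j_\#$, which in turn appears in one of $H^{D^\pm}_\#$.

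A second small point: to know that \emph{some} $\wti W'$-constituent $\wti\tau$ of $\wti\sigma|_{\wti W'}$ actually has $\langle\wti\tau,\iota(X_{-1}(e,\phi))\rangle_{\wti W'}\neq 0$, one uses that $\iota(X_{-1}(e,\phi))$ is supported on the preimage of $W_{\text{(-1)-ell}}\subset\ker\sgn$ (Lemma \ref{l:support}), so the $\wti W$-pairing and the $\wti W'$-pairing with $\wti\sigma$ differ only by the index. The paper states the existence of $\wti\sigma_1$ without comment, but this is the underlying reason.
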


\begin{proof}
Let $\wti\sigma\in \Irr_e\wti W.$ There exists
$\phi\in\widehat{A(e)}_0$ such that $\Hom_{\wti
  W}[\wti\sigma,\iota(X_{-1}(e,\phi))]\neq 0.$ Let $\wti\sigma_1$ be
an irreducible $\wti W'$-representation such that $\Hom_{\wti
  W'}[\wti\sigma_1,\wti\sigma]\neq 0$ and
$\langle\wti\sigma_1,\iota(X_{-1}(e,\phi))\rangle_{\wti W'}\neq 0.$

Write $I_\#(E_{e,h,\phi})=\sum_j a^j\wti\sigma_\#^j,$ for
$\wti\sigma_\#^j$ irreducible distinct $\wti W_\#'$-representations, and $a^j\in
\bZ^*$. Suppose further that $\wti\sigma_\#^j=\sum_i \wti\sigma_i^j$ as
$\wti W'$-representations, where $\wti\sigma_i^j$ are irreducible
$\wti W'$-representations. Since $z$ commutes with $\wti W'$, it acts
by a nonzero scalar $u^j_i$ (in fact, a fourth root of $1$) on
$\wti\sigma^i_j.$ Thus
$$\tr(\wti w z,I_\#(E_{e,h,\phi}))=\sum_j\sum_i a^j u^j_i \tr(\wti w,\wti\sigma^j_i).$$
On the other hand, 
by
Proposition \ref{p:tempered-index},
the left hand side equals (up to a nonzero scalar) $\tr(\wti w,
\iota(X_{-1}(e,\phi))$, and so $\tr(\wti w,\wti\sigma_1)$ appears in
the linear combination. By
the linear independence of irreducible $\wti W'$-characters, it
follows that there exist $i,j$ such that
$\wti\sigma^j_i=\wti\sigma_1.$ In other words, there exists $j$ such
that $\wti\sigma_\#^j$ contains $\wti\sigma_1.$ 

Since $\wti\sigma_\#^j$ occurs in $I_\#(E_{e,h,\phi}),$ it must occur
in one of the spaces  
$H^{D^\pm}_\#(E_{e,h,\phi})$ for a choice of sign $\pm.$ This implies
that $\Hom_{\wti W'}[\wti\sigma_1,H^{D^\pm}_\#(E_{e,h,\phi})]\neq 0.$
Corollary \ref{c:vogan} says that
$W_\#\cdot h=W_\#\cdot \nu_{\wti\sigma_1}=W_\#\cdot
\nu_{\wti\sigma}$. 

Since $h$ is
$\delta$-stable, the first claim of the corollary is proved. For the second claim, it is
sufficient to notice that by definition
$(\nu_{\wti\sigma},\nu_{\wti\sigma})=\wti\sigma(\Omega_{\wti W}).$
\end{proof}

\subsection{} We state the main results of this section.

\begin{theorem}\label{l:exhaustion}
$\widehat{\wti W}_\gen=\sqcup_{e \in G \backslash \C N^\sol} \Irr_e\wti W.$
\end{theorem}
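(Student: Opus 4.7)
The plan is to prove the theorem in two stages: first the disjointness of the union, and then its exhaustion of $\widehat{\wti W}_\gen$.

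For disjointness, I would argue as follows. Suppose $\wti\sigma \in \Irr_e\wti W \cap \Irr_{e'}\wti W$ with $e, e' \in \C N^\sol$. Applying Corollary \ref{Casimir-action} to each of the two inclusions yields $W\cdot h_e = W\cdot \nu_{\wti\sigma} = W\cdot h_{e'}$, where $h_e, h_{e'}$ are the neutral elements of chosen Lie triples containing $e$ and $e'$. Since the Dynkin-Kostant classification gives an injection from nilpotent $G$-orbits into $W$-orbits of neutral elements of Lie triples, this forces $G\cdot e = G\cdot e'$.

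For exhaustion, the strategy is to upgrade the injective isometry $\iota : \overline R_{-1}(W) \to R(\wti W)_\gen^\Sg$ of Proposition \ref{p:iota} to a bijection. Once surjectivity is known, any $\wti\sigma \in \widehat{\wti W}_\gen$ has its $\sgn$-symmetrization $\wti\sigma + \wti\sigma\otimes\sgn \in R(\wti W)_\gen^\Sg$ (or simply $\wti\sigma$ when $\wti\sigma\otimes\sgn\cong\wti\sigma$) lying in $\iota(\overline R_{-1}(W))$, and expanding in the orthogonal basis provided by Corollary \ref{c:2.2}, Proposition \ref{p:6.1}(2), and Theorem \ref{t:2.7} at $q=-1$ forces a nontrivial pairing with some $\iota(X_{-1}(e,\phi))$ for $e \in \C N^\sol$; that is, $\wti\sigma \in \Irr_e\wti W$. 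Given the established injectivity, surjectivity of $\iota$ reduces to the dimension identity
$$\dim \overline R_{-1}(W) = \dim R(\wti W)_\gen^\Sg,$$
whose left side equals the number of $\delta$-elliptic conjugacy classes of $W$ by Lemma \ref{l:iso-twisted} combined with Proposition \ref{p:twisted-radical} (explicit counts being listed in Remark \ref{r:ell-classes}), and whose right side equals the number of $\sgn$-orbits on $\widehat{\wti W}_\gen$.

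The main obstacle is this dimension equality. A pragmatic route is a case-by-case check: for example, in type $A_{n-1}$ both sides equal the number of strict partitions of $n$, matching the odd-part-partition count of Remark \ref{r:ell-classes} against the standard $\sgn$-orbit count on genuine characters of the pin cover of $S_n$, and analogous matchings can be verified for the remaining classical and exceptional types. Since the paper emphasizes a construction independent of the classifications of \cite{Mo,Rea,St}, a more intrinsic argument is preferable; one natural candidate is to realize every class function in $R(\wti W)_\gen^\Sg$ as the $\wti W z$-component of an extended Dirac index $I_\#(E_{e,h,\phi})$ with $e \in \C N^\sol$ via Theorem \ref{t:delta-twisted} and Proposition \ref{p:twisted-Dirac}, supplemented by a converse to Corollary \ref{c:vogan} forcing every central character of a genuine $\wti\sigma$ to come from a Lie triple supported on $\C N^\sol$.
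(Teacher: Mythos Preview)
Your disjointness argument via Corollary \ref{Casimir-action} is correct and matches the paper exactly.

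Your exhaustion argument, however, has a genuine gap: the dimension identity $\dim \overline R_{-1}(W) = \dim R(\wti W)_\gen^\Sg$ that you propose to verify is \emph{false} in general. As recorded in Remark \ref{r:8.3} of the paper, the two dimensions disagree when $W$ is of type $D_{2n}$ or $E_7$ (for example, in $E_7$ there are $12$ $\delta$-elliptic classes but $\dim R(\wti W)_\gen^\Sg = 13$). Thus $\iota$ is not surjective in those cases, and your strategy of reading off exhaustion from surjectivity cannot succeed as stated. Your fallback suggestion of a converse to Corollary \ref{c:vogan} is also not developed in the paper and would require substantial additional work.

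The paper's proof of exhaustion bypasses this entirely with a much more direct argument. Given $\wti\sigma \in \widehat{\wti W}_\gen$, the tensor product $\wti\sigma\otimes\C S$ is an honest $W$-representation. Since $\{X_{-1}(e,\phi) : e\in G\backslash\C N,\ \phi\in\widehat{A(e)}_0\}$ is a \emph{basis} of $R(W)$ (the change-of-basis matrix $K(-1)$ from the Springer basis $\{\sigma(e,\phi)\}$ is unitriangular), there must exist some $(e,\phi)$ with $\langle\wti\sigma\otimes\C S, X_{-1}(e,\phi)\rangle_W \neq 0$, i.e., $\langle\wti\sigma, \iota(X_{-1}(e,\phi))\rangle_{\wti W} \neq 0$. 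In particular $\iota(X_{-1}(e,\phi))\neq 0$, and Proposition \ref{p:iota} then forces $e\in\C N^\sol$. No surjectivity of $\iota$ is needed: one only needs that the $X_{-1}(e,\phi)$ span $R(W)$, not that their images under $\iota$ span $R(\wti W)_\gen^\Sg$.
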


\begin{proof}
Let $\wti\sigma$ be an irreducible genuine $\wti
W$-representation. Then $\wti\sigma\otimes\C S$ is a
$W$-representation. Since $\{X_{-1}(e, \phi): e\in G\backslash
\C N,\phi\in\widehat{A(e)}_0\}$ is a basis of $R(W)$, there exists
$(e, \phi)$ such that $\langle \wti\sigma\otimes\C
S,X_{-1}(e, \phi)\rangle_W\neq 0$, or equivalently $\langle
\wti\sigma,\iota(X_{-1}(e, \phi))\rangle_{\wti W}\neq 0.$ In
particular, $\iota(X_{-1}(e, \phi))\neq 0,$ thus by Proposition \ref{p:iota}, $e \in \C N^\sol$. The disjointness follows from Corollary \ref{Casimir-action}. 
\end{proof}

\begin{theorem}\label{t:main-spin} Let $e \in \C N^\sol$ and $\phi\in\widehat {A(e)}_0$ be given.
\begin{enumerate}
\item For every $\wti\sigma\in \Irr_{e,[\phi]}\wti W,$ $m_{e,\phi}(\wti\sigma)=\langle\sigma(e, \phi)\otimes\C
  S,\wti\sigma\rangle_{\wti W}$, and in particular,
$\iota(X_{-1}(e, \phi))=X_{-1}(e, \phi)\otimes\C S$ 
is the character of a genuine representation of $\wti W.$
%\item $\Irr_{e}\wti W\cap\Irr_{e'}\wti W=\emptyset$ if $G\cdot e\neq G\cdot {e'}$.
% and $\widehat{\wti   W}_\gen=\cup_{(e, \phi)}\Irr_{e,\phi}\wti W.$
\item If 
$\langle\phi,\phi\rangle^{-1}_{A(e)}=1,
$
in particular, when $e$ is distinguished,
then 
$\Irr_{e,[\phi]}\wti W=\{\wti\sigma(e,[\phi])\}$, where
$\wti\sigma(e,[\phi])$ is irreducible $\sgn$ self dual if $\dim V$ is
even, and $\Irr_{e,[\phi]}\wti W=\{\wti\sigma(e,[\phi])^+,\wti\sigma(e,[\phi])^-\}$, where
$\wti\sigma(e,[\phi])^\pm$ are irreducible $\sgn$ dual representations
when $\dim V$ is odd.
\item If $\langle \phi,\phi'\rangle^{-1}_{A(e)}=0$, $\phi\neq \phi'$,
  in particular, when $e$ is distinguished, then $\Irr_{e,[\phi]}\wti
  W\cap \Irr_{e,[\phi']}\wti W=\emptyset.$
\end{enumerate}
\end{theorem}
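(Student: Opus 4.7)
The plan is to prove the three parts in sequence, with part (1) carrying the main technical weight and parts (2)--(3) following from it via Proposition \ref{p:iota}.

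For part (1), I would invert the Lusztig-Shoji triangular matrix $K(-1)$ to isolate the multiplicity of a given $\wti\sigma \in \Irr_{e,[\phi]}\wti W$. By Springer theory (specifically, the Borho-MacPherson refinement), the Springer representation $\sigma(e,\phi')$ appears in $H^*(\C B_e)$ only in top degree $H^{2d_e}(\C B_e)^{\phi'}$, so $K(q)_{(e,\phi'),(e,\phi)}=\delta_{\phi,\phi'}$ for pairs in the same orbit. In particular $K(-1)$ is block-upper-triangular with identity blocks, and so is its inverse $M(-1)$. This gives in $R(\wti W)$
\begin{equation*}
\sigma(e,\phi)\otimes\C S \;=\; \iota(X_{-1}(e,\phi)) \;+\; \sum_{e'<e,\,\phi'} M(-1)_{(e,\phi),(e',\phi')}\,\iota(X_{-1}(e',\phi')).
\end{equation*}
For $\wti\sigma \in \Irr_{e,[\phi]}\wti W$, each summand contributes zero multiplicity of $\wti\sigma$: when $e'\notin\C N^\sol$, $\iota(X_{-1}(e',\phi'))=0$ by Proposition \ref{p:iota} combined with Proposition \ref{p:6.1}(2); when $e'\in\C N^\sol$ with $e'\neq e$, the disjointness in Theorem \ref{l:exhaustion} gives $\wti\sigma\notin\Irr_{e'}\wti W$. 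Hence $m_{e,\phi}(\wti\sigma)=\langle\sigma(e,\phi)\otimes\C S,\wti\sigma\rangle_{\wti W}$, and since the right side is a genuine multiplicity in the honest representation $\sigma(e,\phi)\otimes\C S$, all $m_{e,\phi}(\wti\sigma)\geq 0$, making $\iota(X_{-1}(e,\phi))$ a genuine character.

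For part (2), Proposition \ref{p:iota} gives $\langle\iota(X_{-1}(e,\phi)),\iota(X_{-1}(e,\phi))\rangle_{\wti W}=a_V$. The character $\iota(X_{-1}(e,\phi))$ is $\sgn$-self-dual since $\C S\otimes\sgn=\C S$ as $\wti W$-characters (the support of $\C S$ lies in $p^{-1}(W_{\text{(-1)-ell}})\subseteq p^{-1}(\ker\sgn)$ by Lemma \ref{l:support}). When $\dim V$ is even, $a_V=1$ forces $\iota(X_{-1}(e,\phi))$ to be a single $\sgn$-self-dual irreducible $\wti\sigma(e,[\phi])$. When $\dim V$ is odd, $a_V=2$ and $\iota(X_{-1}(e,\phi))=\wti\sigma_1+\wti\sigma_2$ for distinct irreducibles. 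Using the decomposition $\C S=S^++S^-$ together with the classical identity $S^+\otimes\sgn=S^-$ in odd rank, set $A=X_{-1}(e,\phi)\otimes S^+\in R(\wti W)$; then $\iota(X_{-1}(e,\phi))=A+A\otimes\sgn$. If both $\wti\sigma_i$ were $\sgn$-self-dual, the identity $A\otimes\sgn=A$ would force $2A=\wti\sigma_1+\wti\sigma_2$, contradicting integrality in $R(\wti W)$. Hence $\wti\sigma_2=\wti\sigma_1\otimes\sgn$, proving the claim.

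For part (3), Proposition \ref{p:iota} yields $\langle\iota(X_{-1}(e,\phi)),\iota(X_{-1}(e,\phi'))\rangle_{\wti W}=0$, and since by part (1) both are non-negative combinations of genuine irreducibles, the vanishing inner product forces $\Irr_{e,[\phi]}\wti W\cap\Irr_{e,[\phi']}\wti W=\emptyset$. The main obstacle will be the within-orbit block-diagonality of $K$ in part (1), which is what allows the triangular inversion to cleanly separate contributions from different orbits; once this is secured via Borho-MacPherson, the rest is algebraic.
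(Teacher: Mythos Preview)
Your proposal is correct and follows essentially the same route as the paper: invert the uni-triangular matrix $K(-1)$, use the disjointness of the sets $\Irr_{e'}\wti W$ (which the paper phrases via Corollary~\ref{Casimir-action}, but your appeal to Theorem~\ref{l:exhaustion} is equivalent and already available), and then read off parts (2)--(3) from Proposition~\ref{p:iota} and the parity trick $S^+\otimes\sgn\cong S^-$. Two cosmetic points: the closure-order inequality should be $e'>e$ rather than $e'<e$ (the $W$-types in $X_q(e,\phi)$ besides $\sigma(e,\phi)$ come from \emph{larger} orbits), and your symbol $M(-1)$ clashes with the paper's $M(q)$---use $K(-1)^{-1}$ instead; also, in the odd-rank step of part (2) the clean statement is that any $\sgn$-self-dual irreducible occurs in $A+A\otimes\sgn$ with even multiplicity, which contradicts multiplicity $1$ (your phrase ``the identity $A\otimes\sgn=A$'' is not what you actually need).
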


\begin{proof}
Since the matrix of Green polynomials $K(q)$ is upper triangular with
$1$ on the diagonal, $K(q)^{-1}$ is again upper triangular with $1$ on
the diagonal and polynomials in $q$ above the diagonal. It is
well-known that the $W$-types in $X_q(e,\phi)$, other than $\sigma(e, \phi)$ are all of the form $\sigma(e',\phi')$ with $e'>e.$  Thus, in $R_q(W)$, we have:
\begin{equation}\label{e:upper-triang}
X_q(e,\phi)=\sigma(e, \phi)-\sum_{e'>e} K(q)^{-1}_{(e, \phi),(e',\phi')} X_q(e',\phi').
\end{equation}
Apply this identity when $q=-1$ and tensor with $\C S$:
\begin{equation}\label{e:upper-triang-1}
\iota(X_{-1}(e, \phi))=\sigma(e, \phi)\otimes\C S-\sum_{e<e', e'\in \C N^\sol}K(-1)^{-1}_{(e, \phi),(e',\phi')} \iota(X_{-1}(e', \phi')),
\end{equation}
an identity in $R(\wti W)_\gen.$ Suppose that $\wti\sigma$ occurs in
the LHS. By Corollary \ref{Casimir-action},
$W\cdot\nu_{\wti\sigma}=W\cdot h$, where $h$ is a neutral element for a
Lie triple of $e$. In particular, $\wti\sigma$ cannot
occur in any $\iota(X_{-1}(e',\phi'))$ with $G\cdot e'\neq G\cdot e$,
since $h$ determines $G\cdot e.$ Thus $\wti\sigma$ can only occur in
$\sigma(e, \phi)\otimes \C S.$ This proves (1).

%Claim (2) follows immediately from Corollary \ref{Casimir-action} too.

For (2), suppose $\langle\phi,\phi\rangle^{-1}_{A(e)}=1$. (The fact
that this is always the case when $u$ is distinguished is immediate.)
By Proposition \ref{p:iota},
$\langle\iota(X_{-1}(e, \phi)),\iota(X_{-1}(e, \phi))\rangle_{\wti
  W}=a_V$. Since $\iota(X_{-1}(e, \phi))$ is also $\sgn$-dual, the only
possibility when $\dim V$ is even is the one stated in (3).  

If
$\dim V$ is odd, then
$\iota(X_{-1}(e, \phi))=\wti\sigma(u,[\phi])^++\wti\sigma(u,[\phi])^-$.
It remains to verify
that $\wti\sigma(u,[\phi])^-=\wti\sigma(u,[\phi])^+\otimes\sgn$. Suppose this is not the
case, so that $\wti\sigma(u,[\phi])^\pm$ are $\sgn$ self-dual. By
(\ref{e:upper-triang-1}), they occur in $\sigma(e, \phi)\otimes \C S$
with multiplicity $1$. But $\C S=S^++S^-$ and since
$S^+=S^-\otimes\sgn$, this is impossible.

Claim (3) follows similarly from Proposition \ref{p:iota}.

\end{proof}

\subsection{}\label{sec:char-formula}
As mentioned in the introduction, Theorem \ref{t:main-spin} can be used to obtain character formulas for $X_{-1}(e, \phi)$ and $H^*(\C B_e)^\phi.$ Recall the notation from the introduction $$\wti\Sigma(e, \phi)=\iota(X_{-1}(e, \phi))=X_{-1}(e, \phi)\otimes\C S.$$
For every $w\in W_{\text{(-1)-ell}}$, we have then
\begin{equation}\label{e:char-form-1}
\tr(w,X_{-1}(e, \phi))=\sum_{i=0}^{d_e}(-1)^{d_e-i}\tr(w,H^{2i}(\C
B_e)^\phi)=\frac{\tr(\wti
  w,\wti\Sigma(e, \phi))}{\tr(\wti w,\C S)},
\end{equation}
where $\wti w$ is a representative of the preimage of $w$ in $\wti W$, and $d_e=\dim \C B_e$. In particular, when $w=1$, we find
\begin{equation}\label{alt-dim}
\sum_{i=0}^{d_e}(-1)^i\dim H^{2i}(\C B_e)^\phi=(-1)^{d_e}\frac {\dim \wti\Sigma(e, \phi)}{\dim \C S}.
\end{equation}

Using Corollary
\ref{c:action-1}, (\ref{e:char-form-1}) can also be interpreted as a character
formula of $H^*(\C B_e)^\phi$ on $\delta$-twisted elliptic
conjugacy classes. 

\begin{corollary}\label{c:char-formula}
Let $w$ be a $\delta$-elliptic element of $W$. Then 
$$\tr(w\delta, H^*(\C B_e)^\phi)=(-1)^{d_e}
\sgn(w_0)\frac{\tr(\wti
  w\wti w_0,\wti\Sigma(e, \phi))}{\tr(\wti w\wti w_0,\C S)}, 
$$
where $\wti w\wti w_0$ is a representative of the preimage of $ww_0$ in $\wti W$.
\end{corollary}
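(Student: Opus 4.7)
The plan is to combine the character formula (\ref{e:char-form-1}) for $X_{-1}(e,\phi)$ on $(-1)$-elliptic classes with Theorem \ref{t:4.1}, which exchanges the actions of $\delta$ and of $w_0$ on the cohomology $H^{2i}(\C B_e)$. The bridge between these is the observation that, since $\delta$ acts on $V$ as $-w_0$, one has $\det_V(1-w\delta)=\det_V(1+ww_0)$, so by Lemma \ref{l:delta-elliptic} the $\delta$-elliptic elements $w\in W$ are exactly those for which $ww_0\in W_{\text{(-1)-ell}}$. Lemma \ref{l:support} then gives $\sgn(ww_0)=1$ for such $w$; this will absorb the $\sgn$-twist in the definition of $X_{-1}(e,\phi)$ and accounts for the $\sgn(w_0)$ factor in the corollary.

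Next, I would sum the identity of Theorem \ref{t:4.1} over $i$, using $\tr(\delta w,\cdot)=\tr(w\delta,\cdot)$ and the conjugacy of $w_0 w$ with $ww_0$ in $W$. Writing $(-1)^i=(-1)^{d_e}(-1)^{d_e-i}$ and unwinding the definition $X_{-1}(e,\phi)=\sum_i(-1)^{d_e-i}H^{2i}(\C B_e)^\phi\otimes\sgn$ evaluated at $ww_0$ (together with $\sgn(ww_0)=1$), this yields the intermediate identity
\[
\tr(w\delta,H^*(\C B_e)^\phi)\;=\;(-1)^{d_e}\sgn(w_0)\,\tr(ww_0,X_{-1}(e,\phi)).
\]

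Finally, I would apply (\ref{e:char-form-1}) with $w$ replaced by $ww_0$, which is legitimate precisely because $ww_0\in W_{\text{(-1)-ell}}$, to rewrite the right-hand factor as $\tr(\wti w\wti w_0,\wti\Sigma(e,\phi))/\tr(\wti w\wti w_0,\C S)$ for any lift $\wti w\wti w_0\in\wti W$ of $ww_0$; combining with the previous display then gives the claimed formula. There is no serious obstacle: the argument is essentially sign bookkeeping, whose one nontrivial ingredient is the triviality of $\sgn$ on $W_{\text{(-1)-ell}}$, which makes the $\otimes\sgn$ twist in $X_{-1}(e,\phi)$ harmless on precisely the elements that enter the formula.
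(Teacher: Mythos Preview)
Your proof is correct and follows essentially the same route as the paper: the paper derives the corollary by combining (\ref{e:char-form-1}) with Corollary~\ref{c:action-1} (itself an immediate consequence of Theorem~\ref{t:4.1}), while you work directly from Theorem~\ref{t:4.1} and make explicit the sign bookkeeping via Lemma~\ref{l:support}. The only difference is that you unpack the $\otimes\sgn$ twist by hand, using $\sgn(ww_0)=1$ on $W_{\text{(-1)-ell}}$, rather than absorbing it into the statement of Corollary~\ref{c:action-1}.
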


The  calculations in Appendix \ref{a:component} allow us to describe $\wti\Sigma(e, \phi)$ explicitly and by comparison to \cite{C}, to identify $\wti\Sigma(e, \phi)$ in terms of the known classifications of irreducible $\wti W$-representations. Therefore, Corollary \ref{c:char-formula} can be effectively used as a character formula for $H^*(\C B_e)^\phi$ on $\delta$-elliptic classes.

\begin{example}
 In $GL(n)$, the class of the element $e \in \C N^\sol$
  is parameterized, via the Jordan canonical form, by a partition
  $\lambda$ of $n$ into distinct parts, and $A(e)=\{1\}$. A partition $\lambda$ of $n$ is called even if $\ell(\lambda)\equiv n$ (mod $2$), otherwise it is called odd, where $\ell(\lambda)$ is the number of parts of $\lambda.$ Then
  $$\wti\Sigma(u_\lambda)=a_\lambda\begin{cases}\wti\sigma_\lambda,&\text{
    $\lambda$ even},\\\wti\sigma_\lambda^++\wti\sigma_\lambda^-,&\text{ 
    $\lambda$ odd },\end{cases}$$ where
  $a_\lambda$ is as in Proposition \ref{p:typeA}, and $\sigma_\lambda$, $\sigma_\lambda^\pm$ are the irreducible $\wti
  S_n$-representations constructed by Schur (cf. \cite{St}). The dimension of $\wti\sigma_\lambda$,  or of each of $\wti\sigma_\lambda^\pm$, $\lambda=(\lambda_1,\dots,\lambda_\ell),$ is given by the formula
$$2^{\lfloor\frac{n-\ell(\lambda)}2\rfloor}g^\lambda,\quad \text{where } g^\lambda=\frac {n!}{\lambda_1!\dots\lambda_\ell!}~\prod_{i<j}\frac{\lambda_i-\lambda_j}{\lambda_i+\lambda_j}.$$
Since $\dim \C S=2^{\lfloor\frac{n}2\rfloor}$, (\ref{alt-dim}) becomes in this case:
$$\sum_{i=0}^{d_{e_\lambda}}(-1)^i\dim H^{2i}(\C B_{e_\lambda})=(-1)^{d_{e_\lambda}}~ g^\lambda.$$
\end{example}

\subsection{}

We end with an application to the decomposition of tensor products $\sigma\otimes\C S$,
$\sigma\in \widehat W.$

Let $\sigma$ be an irreducible $W$-representation. By Springer's
correspondence, write $\sigma=\sigma(e, \phi)$ for $e \in \C N,$
$\phi\in\widehat{A(e)}_0.$ Since every $\wti\sigma$ occurs in a
$\wti\Sigma(e',\phi')=\iota(X_{-1}(e',\phi'))$, with $e'\in \C N^\sol,$ we consider:
\begin{equation}
\langle\sigma(e, \phi)\otimes\C S,\iota(X_{-1}(e',\phi'))\rangle_{\wti
  W}.
\end{equation}
By (\ref{e:upper-triang-1}),
\begin{equation}
\sigma(e, \phi)\otimes \C S=\sum_{e''\ge e} K(-1)^{-1}_{(e, \phi),(e'',\phi'')}~\iota(X_{-1}(e'',\phi'')),
\end{equation}
with $K(-1)^{-1}_{(e, \phi),(e,\phi'')}=1$ if $\phi''=\phi$, and $0$
if $\phi''\neq\phi$. Using Proposition \ref{p:iota}, we find
\begin{equation}\label{e:tensor-decomp}
\langle\sigma(e, \phi)\otimes\C S,\iota(X_{-1}(e',\phi'))\rangle_{\wti
  W}=a_V\sum_{\phi''\in\widehat{A(e')}_0}
K(-1)^{-1}_{(e, \phi),(e', \phi'')}~\langle\phi',\phi''\rangle^{-1}_{A(e)}.
\end{equation}
In particular, this is zero unless $e'\ge e.$

\begin{corollary}\label{c:BMc} Let $e \in \C N$ and $\phi\in \widehat{A(e)}_0.$
If $\langle\sigma(e, \phi)\otimes\C S,\wti\sigma\rangle_{\wti W}\neq
0$, then $\wti\sigma\in\Irr_{e'}\wti W$ for some $e'\in \C N^\sol$
such that $e'\ge e.$
\end{corollary}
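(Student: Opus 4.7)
The plan is to invert the upper-triangular decomposition expressing $X_{-1}(e,\phi)$ in the basis of Springer $W$-characters, and then apply the vanishing result of Proposition \ref{p:iota}. Concretely, the first step is to use the identity (\ref{e:upper-triang-1}), which upon rearranging reads
\begin{equation*}
\sigma(e,\phi)\otimes \C S \;=\; \sum_{e''\ge e,\ \phi''\in\widehat{A(e'')}_0} K(-1)^{-1}_{(e,\phi),(e'',\phi'')}\;\iota(X_{-1}(e'',\phi'')),
\end{equation*}
the summation range coming from upper-triangularity of $K(q)$ with respect to the closure order on nilpotent orbits (the same property recalled in the derivation of (\ref{e:upper-triang})).

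Next, I would invoke Proposition \ref{p:iota}, which guarantees that $\iota(X_{-1}(e'',\phi''))=0$ whenever $e''\notin \C N^\sol$. Thus the effective summation in the display above runs only over pairs $(e'',\phi'')$ with $e''\ge e$ and $e''\in \C N^\sol$. Pairing both sides with $\wti\sigma$ gives
\begin{equation*}
\langle \sigma(e,\phi)\otimes\C S,\wti\sigma\rangle_{\wti W} \;=\; \sum_{\substack{e''\ge e,\ e''\in \C N^\sol\\ \phi''\in\widehat{A(e'')}_0}} K(-1)^{-1}_{(e,\phi),(e'',\phi'')}\; m_{e'',[\phi'']}(\wti\sigma).
\end{equation*}

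Finally, under the hypothesis that the left-hand side is nonzero, at least one term on the right must be nonzero, so there exist $e''\ge e$ with $e''\in \C N^\sol$ and some $\phi''\in \widehat{A(e'')}_0$ with $m_{e'',[\phi'']}(\wti\sigma)\neq 0$. By definition of $\Irr_{e''}\wti W$, this means $\wti\sigma\in \Irr_{e''}\wti W$, and setting $e'=e''$ completes the argument. There is no serious obstacle here since all the ingredients (the upper-triangularity of $K(-1)$, the vanishing part of Proposition \ref{p:iota}, and the definition of $\Irr_{e'}\wti W$) are already in place; the corollary is essentially a direct reading of the decomposition (\ref{e:tensor-decomp}).
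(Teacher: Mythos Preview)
Your proof is correct and follows exactly the approach the paper sets up: the paper derives the decomposition of $\sigma(e,\phi)\otimes\C S$ in terms of the $\iota(X_{-1}(e'',\phi''))$ with $e''\ge e$ (the display just before (\ref{e:tensor-decomp})), notes it vanishes unless $e''\ge e$, and states the corollary as an immediate consequence. Your write-up simply makes the final pairing-with-$\wti\sigma$ step explicit.
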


\appendix
\section{Component groups and spin representations}\label{a:component}
In this appendix, we use a case by case analysis to determine the
structure of the spaces $\overline R_{-1}(A(e))$, the image of the map
$\iota$, and the explicit
decompositions of representations $\iota(X_{-1}(e, \phi)).$ These
calculations can also be used to relate our realization of irreducible
$\wti W$-representations with the known case by case classifications
in \cite{Mo,Rea,St}, and also \cite{C}.

\begin{remark}\label{r:8.3}
The dimension of $R(\wti W)_\gen^\Sg$ equals $|\{\sigma\in \widehat {\wti
  W}_\gen: \sigma\cong \sigma\otimes\sgn\}|+\frac 12 |\{\sigma\in \widehat {\wti
  W}_\gen: \sigma\not\cong \sigma\otimes\sgn\}|$. In particular, the
classification of irreducible $\wti W$-representations \cite{Mo,Rea,St} gives the
following dimensions for $R(\wti W)^\Sg_\gen$:
\begin{enumerate}
\item $A_{n-1}$: the number of partitions of $n$ into distinct parts;
\item $B_n$: the number of partitions of $n$;
\item $D_{n}$, $n$ odd: $\frac 12|\{\lambda\vdash n:
  \lambda\neq\lambda^t\}|+|\{\lambda\vdash n:\lambda=\lambda^t|$;
\item $D_{n}$, $n$ even: $\frac 12|\{\lambda\vdash n:
  \lambda\neq\lambda^t\}|+2|\{\lambda\vdash n:\lambda=\lambda^t|$;
\item $G_2$: 3; $F_4$: 9; $E_6$: 9; $E_7$: 13; $E_8$: 30.
\end{enumerate}
Comparing with the dimensions in Remark \ref{r:ell-classes}, we
conclude that the map $\iota:  \overline R_{-1}(W)\to R(\wti W)_{\gen}^\Sg$ from
Proposition \ref{p:iota} is an isomorphism for all irreducible $W$,
except when $W=D_{2n}$ or $E_7.$
\end{remark}

\subsection{}
We first investigate type $A$.

\begin{lemma} \label{l:Agroup-typeA}
Suppose $G=PGL(n)$ and $e_\lambda\in \C N^\sol$ is a
  nilpotent element given in the Jordan form by the partition
  $\lambda$ of $n$ with distinct parts. Then $A(e_\lambda)=\{1\}$ and 
$$\langle\triv,\triv\rangle^{-1}_{A(e_\lambda)}=2^{\ell(\lambda)-1}.$$
%where $\ell(\lambda)$ is the number of parts of $\lambda.$
\end{lemma}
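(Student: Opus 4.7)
The plan is to handle the two assertions separately, with the first being an easy structural observation and the second following from a direct application of Theorem \ref{t:2.7}.

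For the triviality of $A(e_\lambda)$, I would use that the centralizer $Z_{GL(n)}(e_\lambda)$ is connected (classical fact about $GL_n$-centralizers of nilpotent elements). Since the center $Z(GL(n)) = \mathbb{G}_m \cdot I$ is automatically contained in $Z_{GL(n)}(e_\lambda)$, the quotient map $GL(n) \to PGL(n)$ gives a surjection $Z_{GL(n)}(e_\lambda) \twoheadrightarrow Z_{PGL(n)}(e_\lambda)$, so the latter group is also connected. Hence $A(e_\lambda) = \{1\}$.

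For the pairing, I would invoke Theorem \ref{t:2.7}, which identifies the $(-1)$-elliptic pairing on $R_q(W)^{e_\lambda}$ with the $(q,M)$-pairing on $R_q(A(e_\lambda))^0$ evaluated at $q = -1$. Since $A(e_\lambda)$ is trivial and the only Springer representation is $\triv$,
\[
\langle \triv, \triv \rangle^{-1}_{A(e_\lambda)} \;=\; \dim M \big|_{q=-1},
\]
where $M = \wedge^{-q}V_Z \otimes (\otimes_d \wedge^{-q^d} M(d))$ is the representation from (\ref{e:define-M}). So the task reduces to computing the structure of the reductive quotient $H_{e_\lambda}$ of $Z_{PGL(n)}(e_\lambda)^0$.

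The key observation is that for a nilpotent element of $\mathfrak{gl}(n)$ whose Jordan partition $\lambda = (\lambda_1, \ldots, \lambda_\ell)$ has all distinct parts, each block size occurs with multiplicity one, so the reductive quotient of $Z_{GL(n)}(e_\lambda)^0$ is isomorphic to $(\mathbb{G}_m)^\ell$, one factor per Jordan block. Passing to $PGL(n)$ quotients out by the diagonal $\mathbb{G}_m \subset (\mathbb{G}_m)^\ell$, yielding $H_{e_\lambda} \cong (\mathbb{G}_m)^{\ell-1}$. In particular $H_{e_\lambda}$ is a torus: the semisimple part $H'$ is trivial, so the Weyl group $W_{e_\lambda}$ has no invariants and the representations $M(d)$ are all zero. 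Therefore $M = \wedge^{-q}V_Z$, and since $V_Z$ is the character lattice (tensored with $\mathbb{R}$) of the full central torus $Z^0 = H_{e_\lambda}$, we have $\dim V_Z = \ell(\lambda) - 1$. Evaluating,
\[
\dim \wedge^{-q}V_Z \big|_{q=-1} \;=\; (1-q)^{\ell(\lambda)-1}\big|_{q=-1} \;=\; 2^{\ell(\lambda)-1},
\]
which is the desired value. There is no real obstacle here; the computation is essentially bookkeeping once the centralizer structure is correctly identified, and the main point is to verify that the semisimple part of $H_{e_\lambda}$ vanishes so that the $M(d)$-factors do not contribute.
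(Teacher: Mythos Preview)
Your argument is correct and is precisely the kind of computation the paper has in mind when it writes ``Straightforward.'' One small remark: you do not actually need to invoke Theorem~\ref{t:2.7}---the pairing $\langle\cdot,\cdot\rangle^{-1}_{A(e_\lambda)}$ is by definition the $(q,M)$-pairing at $q=-1$, so once $A(e_\lambda)=\{1\}$ you can compute $\dim M\big|_{q=-1}$ directly from the description of $H_{e_\lambda}$ and~(\ref{e:define-M}), exactly as you do.
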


\begin{proof}
Straightforward.
\end{proof}

\begin{proposition}\label{p:typeA} For every distinct partition $\lambda$ of $n$, define
$$\wti\tau_\lambda=\frac 1{a_\lambda}X_{-1}(e_\lambda)\otimes\C S
,$$
where $a_\lambda=2^{\frac{\ell(\lambda)}2}$, if both $n$ and $\lambda$ are even, and $a_\lambda=2^{\lfloor\frac {\ell(\lambda)-1}2\rfloor}$, otherwise.
Then $\wti\tau_\lambda$ is irreducible $\sgn$ self dual, if $\lambda$ is an even partition, while
$\wti\tau_\lambda=\wti\tau_\lambda^++\wti\tau_\lambda^-$ if
$\lambda$ is an odd partition, with $\wti\tau_\lambda^\pm$
irreducible $\sgn$ dual to each other.
\end{proposition}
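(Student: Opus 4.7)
The plan is to combine a norm computation with a pigeonhole dimension count and a perfect-square parity check.

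First, by Lemma \ref{l:Agroup-typeA} together with Proposition \ref{p:iota},
\[
\langle\wti\Sigma(e_\lambda),\wti\Sigma(e_\lambda)\rangle_{\wti W}\;=\;a_V\cdot 2^{\ell(\lambda)-1},
\]
where $a_V=1$ if $n$ is odd and $a_V=2$ if $n$ is even. In parallel, by Theorem \ref{t:main-spin}(1), $\wti\Sigma(e_\lambda)$ is the character of an actual genuine $\wti W$-representation, and since $\C S\otimes\sgn\cong\C S$, it lies in $R(\wti W)_\gen^{\Sg}$.

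Second, consider the natural orthogonal basis $\{b_\mu\}$ of $R(\wti W)_\gen^{\Sg}$: each $b_\mu$ is either a $\sgn$ self-dual irreducible (with $c_\mu:=\langle b_\mu,b_\mu\rangle_{\wti W}=1$) or a sum $\wti\sigma+\wti\sigma\otimes\sgn$ for a pair of $\sgn$-dual irreducibles (with $c_\mu=2$). By Theorem \ref{l:exhaustion} and Corollary \ref{Casimir-action}, together with the well-definedness of the central character on $\sgn$-dual pairs noted in section \ref{sec:7.9}, each $b_\mu$ has a well-defined associated $G$-orbit in $\C N^\sol$ (namely the orbit whose neutral element matches the central character of $b_\mu$); consequently $\wti\Sigma(e_\lambda)$ is a non-negative integer combination of those $b_\mu$ whose orbit is $G\cdot e_\lambda$. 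By Remark \ref{r:8.3}, the cardinality of $\{b_\mu\}$ equals the number of distinct partitions of $n$, which also counts the $G$-orbits in $\C N^\sol$. Since Proposition \ref{p:iota}(2) ensures $\wti\Sigma(e_\lambda)\neq 0$ for every such $\lambda$, each orbit accounts for at least one $b_\mu$, and the matching totals force exactly one. Thus $\wti\Sigma(e_\lambda)=\alpha_\lambda\,b_{\mu(\lambda)}$ for a unique $\mu(\lambda)$ and some $\alpha_\lambda\in\bZ_{>0}$.

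Third, substituting into the norm identity yields $\alpha_\lambda^{2}\,c_{\mu(\lambda)}=a_V\cdot 2^{\ell(\lambda)-1}$. Running through the four cases indexed by the parities of $n$ and $\ell(\lambda)$, exactly one of $c_{\mu(\lambda)}\in\{1,2\}$ renders $\alpha_\lambda^{2}$ a perfect square: one finds $c_{\mu(\lambda)}=1$ and $\alpha_\lambda=a_\lambda$ when $\lambda$ is even, and $c_{\mu(\lambda)}=2$ and $\alpha_\lambda=a_\lambda$ when $\lambda$ is odd. This is exactly the decomposition $\wti\tau_\lambda=\wti\Sigma(e_\lambda)/a_\lambda=b_{\mu(\lambda)}$ asserted by the proposition.

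The conceptual heart of the argument is the pigeonhole step, which relies on Theorem \ref{l:exhaustion}, Corollary \ref{Casimir-action}, and the dimension formula of Remark \ref{r:8.3}; together with the non-vanishing of $\wti\Sigma(e_\lambda)$, these inputs collapse the $a$ priori combinatorial possibilities to a single $b_\mu$ per orbit. The main obstacle is really verifying that the $b_\mu$ associated to a single orbit is \emph{unique}; the parity case analysis in the third step is elementary bookkeeping.
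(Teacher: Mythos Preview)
Your proof is correct and follows the same overall strategy as the paper's: compute the norm of $\wti\Sigma(e_\lambda)$ via Proposition~\ref{p:iota} and Lemma~\ref{l:Agroup-typeA}, then run a pigeonhole against a known count of genuine $\wti S_n$-representations. The packaging differs in two places. First, the paper counts \emph{irreducibles} directly via Schur's formula $|\widehat{(\wti S_n)}_\gen|=\sum_{\lambda\in\DP(n)} b_\lambda$ and uses only the orthogonality of the $\iota(X_{-1}(e_\lambda))$'s from Proposition~\ref{p:iota}; you instead count the basis $\{b_\mu\}$ of $R(\wti W)^\Sg_\gen$ via Remark~\ref{r:8.3} and invoke the stronger disjointness of Theorem~\ref{l:exhaustion} (hence Dirac theory) to assign each $b_\mu$ to a unique orbit. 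Second, your perfect-square parity check in step three is a tidy way to decide whether $b_{\mu(\lambda)}$ is a single $\sgn$-self-dual irreducible or a $\sgn$-pair; the paper's proof is terser here, leaving that last step implicit in the comparison with~(\ref{e:numbers-A}). So you use slightly heavier machinery for the pigeonhole but gain a cleaner endgame.
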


\begin{proof}
The number of irreducible genuine $\wti S_n$-representations equals
the number of conjugacy classes of $S_n$ that split when pulled back to $\wti S_n$. A well-known result (going back to Schur, see \cite[Theorem 2.1]{St}) says that this number equals the number of partitions of $n$ into odd parts plus the number of odd partitions of $n$ into distinct parts. Denote $\DP(n)$ the set of distinct partitions of $n$ and for every $\lambda\in \DP(n)$, set $b_\lambda$ equal to $1$ or $2$ if $\lambda$ is even or odd, respectively. Thus
\begin{equation}\label{e:numbers-A}
|\widehat{(\wti S_n)}_\gen|=\sum_{\lambda\in\DP(n)}b_\lambda.
\end{equation}
By Proposition \ref{p:iota} and Lemma \ref{l:Agroup-typeA}, we see that for $\lambda\in\DP(n)$,
$$\langle\iota(X_{-1}(e_\lambda)),\iota(X_{-1}(e_\lambda))\rangle_{\wti W}=\begin{cases}a_\lambda^2,&\text{ if $\lambda$ is even},\\ 2a_\lambda^2,&\text{ if $\lambda$ is odd.}\end{cases}$$
This means that $\iota(X_{-1}(e_\lambda))$ contains at least two distinct irreducible $\wti S_n$-representations when $\lambda\in\DP(n)$ is odd. Since for $\lambda\neq\lambda'$, $\iota(X_{-1}(e_\lambda))$ and $\iota(X_{-1}(e_{\lambda'}))$ are orthogonal, the claim in the Proposition follows by comparison with (\ref{e:numbers-A}).
\end{proof}

\subsection{}Next, we prove a criterion which will cover most of the
remaining cases when $G$ is not type $A$ and $e \in \C N^\sol$, but
$e$ is not distinguished. 

\begin{lemma}\label{l:Z2}
Suppose $e \in \C N^\sol$ is such that $A(e)=(\bZ/2\bZ)^k\times
(\bZ/2\bZ)^l$, $k\neq 0$, acts on the $k$-dimensional space $V_Z$ by the
representation $\refl_k\boxtimes\triv_l$, where $\refl_k$ is the
reflection representation of $(\bZ/2\bZ)^k$ and $\triv_l$ is the
trivial representation of $(\bZ/2\bZ)^l$. Then $\dim\overline R_{-1}(A(e))=2^l$ and
$$\langle\phi,\phi\rangle^{-1}_{A(e)}=1,\text{ for all }\phi\in\widehat{A(e)}.$$
Moreover, if $[\phi_1]\neq [\phi_2]$ in $\overline R_{-1}(A(e))$, then 
$$\langle[\phi_1],[\phi_2]\rangle^{-1}_{A(e)}=0.$$
\end{lemma}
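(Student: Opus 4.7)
The plan is to directly compute the pairing from its definition. The crucial reduction is that when $e\in\C N^\sol$, the reductive quotient $H_e$ of $Z_G(e)^0$ is a torus, so the representation $M$ in (\ref{e:define-M}) has no factors $M(d)$ for $d\ge 2$ and reduces to $M=\wedge^{-q}V_Z$. Specializing to $q=-1$, the $(-1,M)$-pairing therefore takes the elementary form
$$
\langle\phi,\phi'\rangle^{-1}_{A(e)}=\frac{1}{|A(e)|}\sum_{x\in A(e)}\phi(x)\phi'(x)\det_{V_Z}(1+x).
$$
Writing $x=(\epsilon,\eta)\in(\bZ/2\bZ)^k\times(\bZ/2\bZ)^l$ and using the hypothesis that $A(e)$ acts on $V_Z$ via $\refl_k\boxtimes\triv_l$, one obtains $\det_{V_Z}(1+x)=\prod_{i=1}^k(1+\epsilon_i)$, which equals $2^k$ precisely when $\epsilon=(1,\ldots,1)$ and vanishes otherwise.

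From this I would extract all three assertions by a straightforward bookkeeping exercise. Since $A(e)$ is abelian, $R(A(e))_\bC$ may be identified with the space of functions on $A(e)$, and the radical of the pairing is precisely the space of functions vanishing on the support $A(e)_{(-1)\text{-ell}}:=\{(1,\ldots,1)\}\times(\bZ/2\bZ)^l$. This immediately yields $\dim\overline R_{-1}(A(e))=|A(e)_{(-1)\text{-ell}}|=2^l$, proving the first claim.

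For the assertion $\langle\phi,\phi\rangle^{-1}_{A(e)}=1$, I would use that every character of the exponent-$2$ group $A(e)$ is $\pm 1$-valued, so $\phi(x)^2=1$, and the sum factors as $\frac{1}{2^{k+l}}\cdot 2^l\cdot\prod_{i=1}^k\sum_{\epsilon_i\in\{\pm 1\}}(1+\epsilon_i)=1$. For the off-diagonal statement, I would decompose $\phi_j=\phi'_j\boxtimes\phi''_j$ along the two factors of $A(e)$ and factor the sum: the $\epsilon$-part evaluates to $2^k$ independently of the $\phi'_j$ (for each coordinate, $\sum_{\epsilon_i}(\phi'_{1,i}\phi'_{2,i})(\epsilon_i)(1+\epsilon_i)=2$, the $\epsilon_i=-1$ term being killed by the determinant factor), while the $\eta$-part evaluates to $2^l\,\delta_{\phi''_1,\phi''_2}$ by orthogonality of characters of $(\bZ/2\bZ)^l$. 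Hence $\langle\phi_1,\phi_2\rangle^{-1}_{A(e)}=\delta_{\phi''_1,\phi''_2}$, and the same analysis identifies the image $[\phi]\in\overline R_{-1}(A(e))$ with the restriction of $\phi$ to $A(e)_{(-1)\text{-ell}}$, which depends only on $\phi''$. Consequently $[\phi_1]\neq[\phi_2]$ forces $\phi''_1\neq\phi''_2$ and the pairing vanishes.

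There is no genuine obstacle: once the $\C N^\sol$ hypothesis is used to reduce $M$ to $\wedge^{-q}V_Z$, the remainder of the proof is pure combinatorics of characters of an elementary abelian $2$-group. The only point requiring minor care is matching the notion of ``image in $\overline R_{-1}(A(e))$'' with restriction to $A(e)_{(-1)\text{-ell}}$, which follows formally from the abelian structure together with the explicit description of the support of the determinant factor.
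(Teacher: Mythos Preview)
Your proof is correct and follows essentially the same approach as the paper: both identify the $(-1)$-elliptic set of $A(e)$ as $\{1\}\times(\bZ/2\bZ)^l$ via the vanishing of $\det_{V_Z}(1+x)=\prod_i(1+\epsilon_i)$, and deduce all three claims from this. The only cosmetic difference is that where you compute the character sums directly, the paper uses the shortcut $\phi\otimes\phi=\triv$ to reduce $\langle\phi,\phi\rangle^{-1}_{A(e)}$ to the multiplicity of $\triv$ in $\wedge V_Z$, and similarly phrases the off-diagonal vanishing as the non-occurrence of a nontrivial $(\bZ/2\bZ)^l$-character in $\wedge V_Z$.
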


\begin{proof}
Let $\sgn^{(i)}$ denote the one dimensional $(\bZ/2\bZ)^k$-representation,
with $\sgn$ on the $i$-th position and $\triv$ everywhere else. Then
$V_Z=\oplus_{i=1}^k \sgn^{(i)}$ as an $(\bZ/2\bZ)^k$-representation. Thus
$\det_{V_Z}(1+x)\neq 0$ if and only if $\text{proj}_{(\bZ/2\bZ)^k}x=1$, hence the 
$(-1)$-elliptic element in $A(e)$ are the subgroup $(\bZ/2\bZ)^l.$ It follows that
$\dim\overline R_{-1}(A(e))=2^l$. 

For the second claim, notice that since $\phi\otimes\phi=\triv,$ we
have
$\langle\phi,\phi\rangle^{-1}_{A(e)}=\langle\triv,\triv\rangle^{-1}_{A(e)}.$
It is straightforward that
$\langle\triv,\triv\rangle^{-1}_{A(e)}=\langle\triv,\wedge V_Z\rangle_{A(e)}=1.$

For the last claim, let $[\phi_1]\neq [\phi_2]$ in $\overline R_{-1}(A(e))$. We can choose $\phi_i=\triv_k\otimes\phi_i'$, $i=1,2$, where $\triv_k$ is the trivial $(\bZ/2\bZ)^k$-representation, and $\phi_1'\neq \phi_2'$ are one dimensional representations of $(\bZ/2\bZ)^l.$ Since $\phi_1'\otimes\phi_2'\neq \triv$, it does not occur in $\wedge V_Z,$ so $\langle[\phi_1],[\phi_2]\rangle^{-1}_{A(e)}=0.$

\end{proof}

\begin{lemma}\label{l:exceptions}
Let $G$ be simple and adjoint and $e \in \C N^\sol$, but not
distinguished. Then $A(e)$ is as in Lemma \ref{l:Z2}, except when:
\begin{enumerate}
\item $G$ is of type $D_{n}$, $n$ even, and $e=e_\lambda$ corresponds via the
  Jordan form to a partition $\lambda=(a_1,a_1,a_2,a_2,\dots,a_{k},a_{k})$
  of $2n$ where $a_i$ are distinct odd positive integers. In this
  case, $A(e_\lambda)=(\bZ/2\bZ)^{k-1}$ acts on the $k$-dimensional
  space $V_Z$ by twice the reflection representation. Then $\overline
  R_{-1}(A(e_\lambda))$ is one dimensional, and $\langle
  \triv,\triv\rangle^{-1}_{A(e_\lambda)}=2.$
\item $G$ is of type $D_{n}$, $n$ odd, and $e=e_\lambda$ corresponds via the
  Jordan form to a partition $\lambda=(a_1,a_1,a_2,a_2,\dots,a_{k},a_{k})$
  of $2n$ where $a_i$ are distinct odd positive integers. In this
  case, $A(e_\lambda)=(\bZ/2\bZ)^{k-1}$ acts on the $k$-dimensional
  space $V_Z=V_Z^{A(e)}\oplus V_{Z}'$, with $\dim V_Z^{A(e)}=1,$ by
  the reflection representation on $V_Z'$. Then $\overline
  R_{-1}(A(e_\lambda))$ is one dimensional, and $\langle
  \triv,\triv\rangle^{-1}_{A(e_\lambda)}=2.$
\item $G$ is of type $E_7$ and $e$ is of type $A_4+A_1$. The component group $A(e)=\bZ/2\bZ$ acts
  on the two dimensional space $V_Z$ by twice the $\sgn$
  representation. Then $\overline
  R_{-1}(A(e)$ is one dimensional, but $\langle
  \triv,\triv\rangle^{-1}_{A(e)}=2.$
\item $G$ is of type $E_6$ and $e$ is of type $D_4(a_1).$ The component group $A(e)=S_3$ acts on
  the two dimensional space $V_Z$ by the reflection
  representation. Then $\overline R_{-1}(A(e))$ is two
  dimensional, spanned by
  $[\triv]$ and $[\refl]$, and 
\begin{equation}
\langle\triv,\triv\rangle^{-1}_{A(e)}=1,\ \langle\refl,\refl\rangle^{-1}_{A(e)}=3,\ \langle\triv,\refl\rangle^{-1}_{A(e)}=1.
\end{equation} 
\end{enumerate}
\end{lemma}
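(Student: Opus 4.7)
The proof is a case-by-case verification based on the enumeration of non-distinguished orbits in $\C N^\sol$ from Proposition \ref{p:unip-cent}, together with the explicit description of the reductive part $H_e$ of $Z_G(e)^0$ and the component group $A(e)$ in \cite{LS}. For each such orbit I would determine the pair $(A(e), V_Z)$ and check that, outside the four listed exceptions, it matches the structure of Lemma \ref{l:Z2}. Then in each exceptional case the claimed pairings follow from a short direct computation of $\wedge V_Z$ as an $A(e)$-character together with $\langle\phi,\phi'\rangle^{-1}_{A(e)}=\langle\phi\otimes\wedge V_Z,\phi'\rangle_{A(e)}$.

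For classical $G$, each repeated block $(a_i,a_i)$ in the Jordan type contributes a $GL(a_i)$-factor to $H_e$ (so a one-dimensional summand to $V_Z$) and an order-two generator in $A(e)$ acting on this summand by $\sgn$; the remaining unpaired blocks of the relevant parity contribute the trivial $(\bZ/2\bZ)^l$-factor of $A(e)$. In types $A$, $B$, $C$ these contributions are independent, producing exactly the pair $((\bZ/2\bZ)^k\times(\bZ/2\bZ)^l,\ \refl_k\boxtimes\triv_l)$ of Lemma \ref{l:Z2}. In type $D_n$ one must pass from $O$ to $SO$, which imposes a single global sign relation and reduces the $k$ generators to $k-1$ independent ones. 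When $\lambda$ consists purely of paired blocks, this relation absorbs the would-be trivial summand, producing either twice the reflection representation of $(\bZ/2\bZ)^{k-1}$ on $V_Z$ (item (1), $n$ even) or a one-dimensional fixed line plus the reflection representation on a $(k-1)$-dimensional complement (item (2), $n$ odd). Elliptic elements are those with no fixed vector on $V_Z$; in both cases $\wedge V_Z$ takes the value $4$ on such elements rather than $2$, giving $\langle\triv,\triv\rangle^{-1}_{A(e)}=2$.

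For exceptional $G$ the non-distinguished orbits in $\C N^\sol$ were listed explicitly in the proof of Proposition \ref{p:unip-cent}: they are $D_4(a_1)$ in $E_6$; $E_6(a_1)$ and $A_4+A_1$ in $E_7$; and $D_5+A_2$, $D_7(a_1)$, $D_7(a_2)$, $E_6(a_1)+A_1$ in $E_8$; there are none for $G_2$ or $F_4$. For each I would read off $H_e$, $A(e)$ and the action on $V_Z$ from \cite[Chapter 22]{LS}. In all but two of these the component group is $1$ or $\bZ/2\bZ$ with the generator acting by $\sgn$ on a single summand of $V_Z$, placing us in Lemma \ref{l:Z2}. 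The two exceptions are $A_4+A_1$ in $E_7$, where $A(e)=\bZ/2\bZ$ acts on the rank-two central torus of $H_e$ by $\sgn$ on both summands (item (3)), and $D_4(a_1)$ in $E_6$, where the $S_3$-action on the rank-two center is visibly the standard reflection representation, coming from the triality-like permutation of the three simple factors of $Z_G(t\delta)$ in case (2) of the $E_6$ analysis (item (4)). In the latter case, $\wedge V_Z=\triv\oplus\refl\oplus\sgn$ as an $S_3$-module, and the tabulated inner products follow at once from the character table of $S_3$; in the former, $\wedge V_Z=\triv\oplus 2\sgn\oplus\triv$ as a $\bZ/2\bZ$-module gives $\langle\triv,\triv\rangle^{-1}_{A(e)}=2$.

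The main obstacle, such as it is, is the bookkeeping in type $D$: one must carefully track how the passage to the adjoint form modifies $A(e)$ and its representation on $V_Z$, and then separate the two sub-cases according to the parity of the rank, since these give genuinely different answers (items (1) vs.\ (2)) even though both satisfy $\langle\triv,\triv\rangle^{-1}=2$. For the exceptional cases the verification is essentially a table lookup in \cite{LS} followed by the elementary computation of $\wedge V_Z$.
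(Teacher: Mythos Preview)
Your approach is the same as the paper's: a direct case-by-case calculation from the classification of nilpotent orbits and their centralizers (the paper's proof is in fact the single sentence ``direct calculation based on the classification of nilpotent orbits and their component groups''). Your outline is considerably more detailed than what the paper records and is essentially correct, including the explicit computations of $\wedge V_Z$ in the four exceptional cases.

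One bookkeeping slip: your enumeration of non-distinguished orbits in $\C N^\sol$ for $E_6$ is incomplete. Proposition~\ref{p:unip-cent} lists seven orbits in $\C N^\sol$ for $E_6$, of which three ($E_6$, $E_6(a_1)$, $E_6(a_3)$) are distinguished; the non-distinguished ones are $D_5$, $D_5(a_1)$, $A_4+A_1$, and $D_4(a_1)$, not just $D_4(a_1)$. You would need to check the three you omitted and verify that each has $(A(e),V_Z)$ of the shape in Lemma~\ref{l:Z2} (or trivial $A(e)$, in which case there is nothing to do). This is straightforward from \cite{LS} but should be mentioned for completeness.
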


\begin{proof}
The proof is a direct calculation based on the classification of
nilpotent orbits and their component groups.
\end{proof}

\subsection{} %The case $G=PGL(n)$ has already been considered.
Suppose $W$ is of type $B_n$ and $G=Sp(2n).$ The
nilpotent orbits $e \in \C N^\sol$ are in one to one
correspondence with partitions $\mu$ of $2n$ such that $\mu$ has only
even parts and the multiplicity of each part is at most $2$. The distinguished nilpotent
$e_\mu$ correspond to $\mu$ a partition with even distinct parts.
Denote by $\DP(2n)_\ev$ the set of distinct partitions with even
parts of $2n$, and by $\qDP(2n)_\ev$ the set of partitions with even
parts of $2n$ where every part has multiplicity at most $2$ and there
is one part with multiplicity $2$. By Remark \ref{r:8.3}, the number
of irreducible $\wti W$-representations, up to tensoring with $\sgn$,
equals $|P(n)|$, the number of partitions of $n$. 
\begin{comment}
Thus, in light of Lemma \ref{l:Z2} and
Theorem \ref{l:exhaustion}, Theorem \ref{t:main-spin}(2),(3) implies the following identity
\begin{equation}\label{e:comb-eq}
\sum_{\mu\in \DP(2n)_\ev} |\widehat{A(e_\mu)}_0|+|\qDP(2n)_\ev|=|P(n)|.
\end{equation}
Moreover, Lusztig's S-symbols give that
$|\widehat{A(e_\mu)}_0|={{\ell(\mu)}\choose
  \lfloor\frac{\ell(\mu)}2\rfloor}$, when $\mu\in \DP(2n)_\ev$. There
exists  a procedure (based on combining Lusztig's S-symbols with
a combinatorial algorithm of Slooten, see the exposition in
\cite[section 3.7]{C}) to attach to each partition
$\lambda$ of $n$, a partition
$\mu=\mu(\lambda)\in\DP(2n)_\ev\cup\qDP(2n)_\ev$ and a
$\phi(\lambda)\in\widehat {A(e_{\mu(\lambda)})}_0$, in particular,
making (\ref{e:comb-eq}) explicit.
\end{comment}

For every $\mu\in \DP(2n)_\ev\cup
  \qDP(2n)_\ev$ and $\phi\in\widehat {A(e_\mu)}_0,$
set $$\wti\tau(e_\mu,\phi)=X_{-1}(e_\mu,\phi)\otimes\C
S.$$ Then, Lemma \ref{l:Z2} yields:

\begin{proposition}
\begin{enumerate}
\item If $n$ is even, $\wti\tau(e_\mu,\phi)$ is an irreducible $\sgn$
  self dual $\wti W$-representation.
\item If $n$ is odd,
  $\wti\tau(e_\mu,\phi)=\wti\tau(e_\mu,\phi)^++\wti\tau(e_\mu,\phi)^-$,
  where $\wti\tau(e_\mu,\phi)^\pm$ are irreducible $\wti
  W$-representations $\sgn$ dual to each other.
\end{enumerate}
\end{proposition}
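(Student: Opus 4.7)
The plan is to recognize this proposition as an immediate application of Theorem \ref{t:main-spin}(2), once we verify the hypothesis $\langle\phi,\phi\rangle^{-1}_{A(e_\mu)}=1$ for every $\mu\in\DP(2n)_\ev\cup\qDP(2n)_\ev$ and every $\phi\in\widehat{A(e_\mu)}_0$. Granting this hypothesis, since the reflection representation $V$ of $W$ has $\dim V=n$, the two cases in Theorem \ref{t:main-spin}(2) (``$\dim V$ even'' versus ``$\dim V$ odd'') correspond exactly to the two cases of the proposition, yielding the stated decompositions with $\wti\tau(e_\mu,\phi)=\iota(X_{-1}(e_\mu,\phi))$ by Theorem \ref{t:main-spin}(1) and the definition of $\iota$.

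To verify the self-pairing condition, I would split into the two natural subcases. If $\mu\in\DP(2n)_\ev$, then $e_\mu$ is distinguished in $\mathfrak{sp}(2n)$, and the required equality $\langle\phi,\phi\rangle^{-1}_{A(e_\mu)}=1$ is built into the statement of Theorem \ref{t:main-spin}(2). If instead $\mu\in\qDP(2n)_\ev$, then $e_\mu\in\C N^\sol$ is not distinguished. Since $G=Sp(2n)$ is not among the exceptional cases listed in Lemma \ref{l:exceptions} (which are confined to certain orbits in types $D_n$, $E_6$, and $E_7$), the lemma asserts that $A(e_\mu)$ falls under the template of Lemma \ref{l:Z2}: it is isomorphic to $(\bZ/2\bZ)^k\times(\bZ/2\bZ)^l$ with $k\neq 0$, acting on $V_Z$ by $\refl_k\boxtimes\triv_l$. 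Lemma \ref{l:Z2} then delivers $\langle\phi,\phi\rangle^{-1}_{A(e_\mu)}=1$ for every $\phi\in\widehat{A(e_\mu)}_0$.

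The only genuine computation underlying this plan is the structural claim about $A(e_\mu)$ and its action on $V_Z$ for $\mu\in\qDP(2n)_\ev$. In our approach this is absorbed into Lemma \ref{l:exceptions}; concretely it rests on the classical description of centralizers in $Sp(2n)$ reviewed in the proof of Proposition \ref{p:2.6} (the reductive part of $Z_G(e_\mu)$ is a product of $Sp_{r_i}$'s and $O_{r_i}$'s, and the central torus of $Z_G(e_\mu)^0$ together with the $A(e_\mu)$-action on it is read off from the repeated even parts of $\mu$). I do not anticipate any serious obstacle beyond this bookkeeping; the conclusion then follows mechanically by invoking Theorem \ref{t:main-spin}(2) in each parity of $n$.
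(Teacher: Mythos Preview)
Your proposal is correct and matches the paper's approach exactly: the paper states the proposition immediately after the sentence ``Then, Lemma \ref{l:Z2} yields,'' which encodes precisely your two-case argument (distinguished $\mu\in\DP(2n)_\ev$ handled directly by Theorem \ref{t:main-spin}(2), non-distinguished $\mu\in\qDP(2n)_\ev$ reduced to the hypothesis $\langle\phi,\phi\rangle^{-1}_{A(e_\mu)}=1$ via Lemma \ref{l:Z2}, then fed into Theorem \ref{t:main-spin}(2)). Your explicit identification $\dim V=n$ and the appeal to Lemma \ref{l:exceptions} to confirm that $Sp(2n)$ avoids the exceptional cases are the details the paper leaves implicit.
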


\subsection{} If $W$ (and $G$) is exceptional of type $G_2$, $F_4$, or $E_8$
for every $e \in \C N$, either Theorem
\ref{t:main-spin}(2),(3) or Lemma \ref{l:Z2} applies. Since $\dim V$
is even, $a_V=1$, and
$\wti\tau(e, \phi)=\iota(X_{-1}(e, \phi))$ is an irreducible $\sgn$ self
dual $\wti W$-representation.

\subsection{} Let $W$ be of type $E_6$. There are seven orbits in
$\C N^\sol$, three of which are distinguished. From Lemma
\ref{l:exceptions}, it follows that when $u$ is of type $D_4(a_1)$, then $A(e)=S_3$ and
\begin{equation}
\wti\tau(D_4(a_1),\triv):=X_{-1}(D_4(a_1),\triv)\otimes\C S
\end{equation}
is an irreducible $\sgn$ self dual representation of $\wti W(E_6)$, while
\begin{equation}
\begin{split} 
\wti\tau(D_4(a_1),\refl): &=X_{-1}(D_4(a_1),\refl)\otimes\C S \\ &=\wti\sigma(D_4(a_1),\triv)+\wti\sigma(D_4(a_1),\refl)^++\wti\sigma(D_4(a_1),\refl)^-,
\end{split}
\end{equation}
where $\wti\sigma(D_4(a_1),\refl)^\pm$ are irreducible $\sgn$ dual $\wti W(E_6)$-representations. A basis of $\overline R_{-1}(A(e))$ consisting of orthogonal elements is $\{[\triv],[\refl]-[\triv]\}.$

\subsection{}For type $E_7$, the interesting case is the nilpotent element $e$ of type $A_4+A_1$. Then $A(e)=\bZ/2\bZ$, and $V_Z$ is two dimensional. By
Lemma \ref{l:exceptions}, 
$\langle\iota(X_{-1}(A_4+A_1,\triv)),\iota(X_{-1}(A_4+A_1,\triv))\rangle_{\wti
  W}=4$ (since $\dim V$ is odd). Using \ref{t:main-spin}(2),(3) and
Lemma \ref{l:Z2}, the classes in
$\C N^\sol\setminus\{A_4+A_1\}$ account for $11$
distinct irreducible $\wti W$-representations (modulo
$\otimes\sgn$). This implies that $\iota(X_{-1}(A_4+A_1,\triv))$ is
either two copies of a $\sgn$ self-dual irreducible representation or a sum
of two pairs of $\sgn$ dual irreducible representations. The latter is
in fact the correct one, and this can be seen either by invoking the
fact that $\wti W(E_7)$ does not have $\sgn$ self-dual irreducible
representations \cite{Mo}, or by refining the argument used to prove
Theorem \ref{t:main-spin}(2) as follows. If $
\iota(X_{-1}(A_4+A_1,\triv))=2\wti\sigma$, where $\wti\sigma$ is $\sgn$
self dual, then the only possibility is that
$\sigma(A_4+A_1,\triv)\otimes S^\pm$ each contain $\wti\sigma$ with
multiplicity $1$; moreover, there are no other $\wti
W$-representations $\wti\sigma'$ such that
$W\cdot\nu_{\wti\sigma'}=W\cdot\nu_{\wti\sigma}=W\cdot h_{A_4+A_1}.$
By Theorem \ref{c:vogan}, only $\wti\sigma$ can occur in the  
Dirac cohomology spaces, and in particular, $X_1(A_4+A_1,\triv)\otimes
(S^+-S^-)=\wti\sigma-\wti\sigma=0$ (see \cite{CT,COT} for details about
the   Dirac index). Since $(S^+-S^-)\otimes
(S^+-S^-)^*=2\wedge^{-1}V,$ it follows that $\langle
X_1(A_4+A_1,\triv),X_1(A_4+A_1,\triv)\rangle_W^{1}=0.$ But this is a
contradiction with Proposition \ref{p:6.1}(1), since $A_4+A_1$ is
quasidistinguished in $E_7.$

\subsection{} Suppose $W$ is of type $D_n$ and $G=PSO(2n)$. 

When $n$ is odd, all representatives of pairs $(e, \phi)$,  $e\in
\C N^\sol$ are as in Theorem \ref{t:main-spin}(2),(3) or as in Lemma
\ref{l:Z2}, except when $e=e_\mu$ corresponds to the partition
$\mu=(a_1,a_1,a_2,a_2,\dots, a_k,a_k)$ of $2n$, where $k$ is odd, $a_i$ are
all distinct and odd, see Lemma \ref{l:exceptions}. In this case,
since $\dim V$ is odd, we have
$$\langle\iota(X_{-1}(e_\mu,\triv)),\iota(X_{-1}(e_\mu,\triv))\rangle_{\wti
W}=4.$$ One may resolve the ambiguity in the same way as for $A_4+A_1$
in $E_7$ and find that
$\iota(X_{-1}(e_\mu,\triv))=2\wti\sigma(e_\mu,[\triv]),$ for some
irreducible, $\sgn$ self dual $\wti W$-representation
$\sigma(e_\mu,[\triv])$. (In this case, $e_\mu$ is not
quasidistinguished.) 

When $n$ is even, all representatives of pairs $(e, \phi)$,  $e\in
\C N^\sol$ are as in Theorem \ref{t:main-spin}(2),(3) or as in Lemma
\ref{l:Z2}, except when $e=e_\mu$ corresponds to the partition
$\mu=(a_1,a_1,a_2,a_2,\dots, a_k,a_k)$ of $2n$, where $k$ is even, $a_i$ are
all distinct and odd, see Lemma \ref{l:exceptions}. In this case,
since $\dim V$ is even, we have
$$\langle\iota(X_{-1}(e_\mu,\triv)),\iota(X_{-1}(e_\mu,\triv))\rangle_{\wti
W}=2,$$ so
$\iota(X_{-1}(e_\mu,\triv)=\wti\sigma(e_\mu,[\triv])_1+\wti\sigma(e_\mu,[\triv])_2.$
One can prove that $\wti\sigma(e_\mu,[\triv])_{1,2}$ are $\sgn$
self-dual \footnote{Erratum:
  In \cite[Theorem 3.8.1(2)]{C}, it is incorrectly stated that
  $(\wti\sigma)_{1,2}$ are $\sgn$ dual to each other.}
 by invoking an argument similar to that for $A_4+A_1$ in
$E_7$, using the   Dirac index in the even case
(\cite{CT,COT}) and the fact that $e_\mu$ is quasidistinguished in
$D_n$, $n$ even.

\begin{remark}\label{r:explain}
The nilpotent element $u=A_4+A_1$ in $E_7$ can be realized as a
regular nilpotent element in $Z_G(t)=A_3+A_3+A_1$, see the proof of
Proposition \ref{p:unip-cent}. Similarly, the nilpotent element
$e_\mu$ in $D_n$ with $\mu=(a_1,a_1,a_2,a_2,\dots,a_k,a_k)$, $a_i$
distinct and odd, can be realized as the pair $(e_\lambda,e_\lambda)$,
$\lambda=(a_1,a_2,\dots,a_k)$, in $Z_G(t)=D_{n/2}\times D_{n/2}$, when
$n$ is even, respectively $Z_G(t\delta)=B_{\frac{n+1}2}\times
B_{\frac{n+1}2}$ when $n$ is odd. Thus the automorphism $\theta$
coming from the symmetry of the affine Dynkin diagram interchanges the
two factors of $e_\lambda$ and
the exceptions appear to be related to this phenomenon.

\end{remark}

\section{Relation with Kostka systems}\label{sec:3}
We explain a relation between our approach and the
results of Kato \cite{Ka} about Kostka systems in the category of
$A_W$-modules, where $A_W=\bC[W]\ltimes S(V_\bC).$ The relevant
homological properties of the category of $A_W$-modules are presented
in \cite[section 2]{Ka}.

\subsection{} Retain the notation from the previous section. Thus $W$
is a finite Weyl group acting on the real reflection representation
$V$. Define 
\begin{equation}
A_W=\bC[W]\ltimes S(V_\bC);
\end{equation}
in the language of section \ref{sec:twistedirac}, this is the same as
the graded affine Hecke algebra with zero parameters.

Let $X,Y$ be $A_W$-modules. Then one may define a structure of
$A_W$-modules on $X\otimes_\bC Y$ and $\Hom_\bC[X,Y]$ as follows:
\begin{enumerate}
\item $X\otimes_\bC Y$
\begin{equation}\label{e:tensor}
\begin{aligned}
&w\cdot (x\otimes y)=w\cdot x\otimes w\cdot y,\quad w\in W,\\
&\xi\cdot (x\otimes y)=\xi\cdot x\otimes y+x\otimes\xi\cdot y,\quad
\xi\in V_\bC;
\end{aligned}
\end{equation}
\item $\Hom_\bC[X,Y]$
\begin{equation}\label{e:hom}
\begin{aligned}
&(w\cdot \phi)(x)=w\cdot \phi(w^{-1}x),\quad w\in W,\\
&(\xi\cdot\phi)(x)=\xi\cdot\phi(x)-\phi(\xi\cdot x),\quad \xi\in V_\bC,
\end{aligned}
\end{equation}
for all $\phi\in\Hom_\bC[X,Y]$.
\end{enumerate}

\begin{lemma}
\begin{enumerate}
\item Definitions (\ref{e:tensor}) and (\ref{e:hom}) extend to actions
  of $A_W$.
\item If $\bC$ is the trivial $A_W$-module ($w$ acts by $1$ and $\xi$
  acts by $0$), then $X\otimes_\bC \bC\cong X,$ as $A_W$-modules.
\item $\Hom_\bC[X,Y]\cong X^*\otimes_\bC Y.$
\end{enumerate}
\end{lemma}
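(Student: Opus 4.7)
The three claims are routine compatibility checks that one performs whenever setting up a smash-product (or Hopf-algebra-type) representation theory. My plan is to treat each part by verifying the defining relations of $A_W=\bC[W]\ltimes S(V_\bC)$, using that $A_W$ is presented by $W$ and $V_\bC$ subject to: $W$ is a group, elements of $V_\bC$ commute, and $w\xi w^{-1}=w(\xi)$ for $w\in W$, $\xi\in V_\bC$.

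For part (1), I would verify the two nontrivial relations separately. On $X\otimes_\bC Y$, the formula $\xi(x\otimes y)=\xi x\otimes y+x\otimes\xi y$ is a derivation, so applying $\xi$ and $\eta$ in either order and subtracting yields $[\xi,\eta]x\otimes y+x\otimes[\xi,\eta]y$, which vanishes because $X$ and $Y$ are already $S(V_\bC)$-modules. The cross relation $w\xi=w(\xi)w$ is inherited directly: both sides applied to $x\otimes y$ produce $(w(\xi)w)x\otimes wy+wx\otimes (w(\xi)w)y$ once the relation is used inside each tensor factor. The analogous checks on $\Hom_\bC[X,Y]$ are parallel: $\xi$ acts by the commutator $\phi\mapsto \xi\circ\phi-\phi\circ\xi$, which is again a derivation (killing $[\xi,\eta]$), and the cross relation reduces to its validity in $X$ and $Y$.

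For part (2), I would exhibit the canonical linear isomorphism $X\otimes_\bC\bC\to X$, $x\otimes c\mapsto cx$; since $W$ acts trivially on $\bC$ and $V_\bC$ acts by zero, the tensor formulas collapse to the original $A_W$-action on $X$. For part (3), I would use the standard linear isomorphism $X^*\otimes_\bC Y\to\Hom_\bC[X,Y]$, $f\otimes y\mapsto (x\mapsto f(x)y)$. Checking on generators, applying part (1) to the trivial module $\bC=Y$ gives the $A_W$-structure on $X^*=\Hom_\bC[X,\bC]$ via the standard contragredient formulas $(wf)(x)=f(w^{-1}x)$ and $(\xi f)(x)=-f(\xi x)$; combining these with the Leibniz rule on $X^*\otimes_\bC Y$ reproduces precisely the Hom action of (\ref{e:hom}).

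There is no substantive obstacle; the only minor subtlety is that the linear map in part (3) is only surjective when $X$ (or $Y$) is finite dimensional, which is the regime in which the lemma is applied. If one wished to state the result in full generality one would replace $\Hom_\bC[X,Y]$ by its subspace of finite-rank operators.
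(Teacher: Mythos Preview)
Your proposal is correct and is precisely the routine verification the paper has in mind; the paper's own proof consists of the single word ``Straightforward.'' Your remark about the finite-dimensionality hypothesis needed for the isomorphism in part (3) is a helpful clarification that the paper leaves implicit.
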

\begin{proof} Straightforward.
\end{proof}

\subsection{} Since $A_W\otimes_{W} M\cong S(V)\otimes_\bC M,$ for
every $W$-module $M$, the usual Koszul complex of vector spaces admits an 
interpretation as a projective resolution of the trivial
$A_W$-module. More precisely, let
\begin{equation}
\ep: A_W\otimes_\bC \bC\to \bC,\ \ep(a\otimes \lambda)=a\cdot \lambda,
\end{equation}
i.e., the $A_W$-action in the trivial module, and
\begin{equation}
\begin{aligned}
&\partial_{n-1}: A_W\otimes_W \wedge^{n} V\to
A_W\otimes_W\wedge^{n-1}V,\\
&a\otimes(\xi_1\wedge\dots\wedge\xi_n)\mapsto \sum_{\ell=1}^n
(-1)^{\ell+1} a\xi_\ell\otimes
(\xi_1\wedge\dots\wedge\hat\xi_\ell\wedge\dots\wedge\xi_n).
\end{aligned}
\end{equation}
It is easy to check that $\partial_{n-1}$ is a well-defined
$A_W$-module homomorphism. Therefore
\begin{equation}\label{e:Koszul}
0\xleftarrow{} \bC
\xleftarrow{\ep}A_W\otimes_W\bC\xleftarrow{\partial_0} A_W\otimes_W
V\xleftarrow{\partial_1}A_W\otimes_W \wedge^2 V\xleftarrow{\partial_2}\dots
\end{equation}
is a projective resolution of the trivial module in the category of
$A_W$-modules. Since tensor products exist in this category, one may
tensor this complex by $-\otimes_\bC X$ to obtain a projective
resolution for every finite dimensional $A_W$-module $X$:
\begin{equation}\label{e:projresol}
0\xleftarrow{} X
\xleftarrow{\ep}A_W\otimes_W X\xleftarrow{\partial_0} A_W\otimes_W
V\otimes X\xleftarrow{\partial_1}A_W\otimes_W \wedge^2 V\otimes X\xleftarrow{\partial_2}\dots,
\end{equation}
where we used the isomorphism $\bC\otimes_\bC X\cong X$, and therefore
the morphism $\ep: A_W\otimes_W X\to X$ becomes the action of $A_W$ on $X$.

\subsection{} We regard $A_W$ as a graded algebra by assigning to $w$
degree $0$ and $\xi\in V_\bC$ degree $1$. This differs from the
convention in \cite{Ka}, where the elements of $V_\bC$ have degree
$2$, but it will be consistent with the results in section
\ref{sec:2}. We consider the category of graded $A_W$-modules:
$A_W$-gmod. If $X$ is a $\bZ$-graded vector space, $X=\oplus_j X(j),$
let 
$$\gdim X=\sum_{j} q^j \dim X(j)$$
denote the graded dimension of $X$. 

\begin{definition}[\cite{Ka}] If $X,Y$ are finite dimensional graded
  $A_W$-modules, define the graded Euler-Poincar\'e pairing
\begin{equation}
\langle X,Y\rangle^{\gEP}_{A_W}:=\sum_{i\ge 0} (-1)^i
\gdim\Ext^i_{A_W}(X,Y)\in \bZ[q].
\end{equation}
\end{definition}

It is clear that the maps $\partial_n$ in the Koszul complex are
graded maps of degree $0$, and this makes (\ref{e:Koszul}) and
(\ref{e:projresol}) graded complexes.

Define the graded $W$-character of $X\in A_W$-gmod:
$$\gch_W X=\sum_{\sigma\in\widehat W}\sum_{j\in \bZ} q^j\dim\Hom_W[\sigma,X(j)].$$

\begin{proposition}
If $X,Y$ are finite dimensional graded $A_W$-modules, then 
$$\langle X,Y\rangle^{\gEP}_{A_W}=\langle\gch_W X,\gch_W Y\rangle^{q}_W.$$
\end{proposition}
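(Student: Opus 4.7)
The plan is to use the Koszul resolution (\ref{e:projresol}) of $X$ as a projective resolution in the graded category $A_W$-gmod, apply $\Hom_{A_W}^{\bullet}(-,Y)$, and take the Euler characteristic. First I would note that, with $\wedge^i V$ placed in internal degree $i$, the differentials $\partial_{n-1}$ of (\ref{e:projresol}) have internal degree zero, so the complex is a bounded resolution of $X$ by graded projective $A_W$-modules (each $A_W\otimes_W(\wedge^i V\otimes X)$ is projective because $A_W$ is free as a right $\bC[W]$-module, being isomorphic to $S(V_\bC)\otimes\bC[W]$).

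Applying $\Hom_{A_W}^\bullet(-,Y)$ term by term and using the induction--restriction adjunction
\[
\Hom_{A_W}(A_W\otimes_W M,\,Y)\;\cong\;\Hom_W(M,Y),
\]
I identify $\Hom_{A_W}^\bullet(P_i,Y)$ with $\Hom_W^\bullet(\wedge^iV\otimes X,Y)$ as graded vector spaces, where the right-hand side carries the internal grading inherited from the placement of $\wedge^iV$ in degree $i$. Since $\Ext^i_{A_W}(X,Y)$ is the $i$-th cohomology of this complex, the Euler principle for bounded complexes of finite dimensional graded vector spaces gives
\[
\langle X,Y\rangle^{\gEP}_{A_W}\;=\;\sum_{i\ge0}(-1)^i\gdim\Ext^i_{A_W}(X,Y)\;=\;\sum_{i\ge0}(-1)^i\gdim\Hom_W^\bullet(\wedge^iV\otimes X,\,Y).
\]

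Next I would translate the graded dimensions into character pairings. By the tautological isomorphism $\Hom_W(A,B)\cong(A^*\otimes B)^W$, the graded dimension of $\Hom_W^\bullet(\wedge^iV\otimes X,Y)$ equals $\langle [\wedge^iV]\cdot\gch_W X,\gch_W Y\rangle_W$ times the $q^i$ shift produced by placing $\wedge^iV$ in degree $i$; this step uses only the $\bZ[q]$-bilinear extension of the ordinary character pairing of $W$. Substituting and collapsing the alternating sum via the identity $\sum_i(-q)^i[\wedge^iV]=\wedge^{-q}V$ yields
\[
\sum_{i\ge0}(-1)^iq^i\langle[\wedge^iV]\cdot\gch_W X,\gch_W Y\rangle_W
=\langle\gch_W X\otimes\wedge^{-q}V,\,\gch_W Y\rangle_W
=\langle\gch_W X,\gch_W Y\rangle^q_W,
\]
which is the desired identity.

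The main obstacle is purely bookkeeping: one must verify that the internal grading convention on the Koszul resolution (i.e.\ the $q^i$ shift on $\wedge^iV$) is the same convention implicit in the definition $\wedge^{-q}V=\sum_i(-q)^i\wedge^iV$ of the $q$-elliptic pairing, and match it with the convention adopted for $\gdim$ of the graded $\Hom$. Once the conventions are aligned, the argument is a routine Koszul-duality computation and the final identity is the tautology above. No new ingredient is required beyond the projectivity of the Koszul terms, the induction--restriction adjunction, and the Euler principle.
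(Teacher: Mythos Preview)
Your proposal is correct and follows essentially the same route as the paper: both use the Koszul resolution (\ref{e:projresol}), apply the Euler--Poincar\'e principle to pass from cohomology to the terms of the complex, invoke the induction--restriction adjunction (what the paper calls Frobenius reciprocity) to rewrite $\Hom_{A_W}(A_W\otimes_W\wedge^iV\otimes X,Y)$ as $\Hom_W(\wedge^iV\otimes X,Y)$, and then identify the alternating sum with the $q$-elliptic pairing via $\gch_W\wedge^iV=q^i\wedge^iV$. Your version is somewhat more explicit about the grading conventions and why the Koszul terms are projective, but there is no substantive difference in strategy.
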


\begin{proof}
The proof is a simple application of the (graded) Euler-Poincar\'e principle
using the resolution (\ref{e:projresol}) and it is an immediate analogue
of the proof for the group algebra of the affine Weyl in the
non-graded setting \cite[Theorem 3.2]{OS}:
\begin{align*}
\langle X,Y\rangle^{\gEP}_{A_W}&=\sum_{i\ge 0} (-1)^i
\gdim\Ext^i_{A_W}(X,Y)=\sum_{i\ge 0} (-1)^i\gdim
H^i(\Hom_{A_W}(A_W\otimes_W\wedge^* V\otimes X,Y))\\
&=\sum_{i\ge 0} (-1)^i\gdim
\Hom_{A_W}(A_W\otimes_W\wedge^iV\otimes X,Y),\quad \text{by 
  Euler-Poincar\'e principle}\\
&=\sum_{i\ge 0} (-1)^i \gdim \Hom_W(\gch_W(X\otimes \wedge^i
V),\gch_W Y),\quad\text{by Frobenius reciprocity}\\
&=\langle X, Y\rangle^{q}_W,\quad \text{since }\gch_W\wedge^i V=q^i
\wedge^i V.
\end{align*}
\end{proof}

\begin{example}
In \cite{S-Kato}, the Springer $W$-action on $H^*(\C B_e)^\phi$ is upgraded to an action of the affine Weyl group. As a consequence, one can define a graded $A_W$-module structure $\C X_q(e,\phi)$  on $H^*(\C B_e)^\phi$ \cite{Ka}, such that $\gch_W \C X_q(e,\phi)=X_q(e,\phi).$ These are particular examples of Kostka systems \cite[Definition A]{Ka}. Thus:
$$\langle \C X_q(e,\phi),\C X_q(e',\phi')\rangle^{\gEP}_{A_W}=\langle\gch_W X_q(e,\phi),\gch_W X_q(e',\phi')\rangle^{q}_W.$$
\end{example}


\begin{thebibliography}{20}

\bibitem{BCT}
D.~Barbasch, D.~Ciubotaru, P.~Trapa
\emph{The Dirac operator for  graded affine Hecke algebras}, Acta
Math. {\bf 209} (2012), no. 2, 197--227.


\bibitem{BS}
W.~M.~Beynon, N.~Spaltenstein, 
\emph{Green functions of finite Chevalley groups of type $E_n$ ($n=6,7,8$)}, J.~Algebra {\bf 88} (1984), 584--614.

\bibitem{BS2}
W.~M.~Beynon, N.~Spaltenstein, 
\emph{Computation of the Green functions of simple groups of type
  $E_n$ ($n=6,7,8$)}, Computer Centre Report, no. 23, University of
Warwick, England.

\bibitem{BM}
R.~ Bezrukavnikov, I.~ Mirkovi\'c, \emph{Representations of semi-simple Lie algebras in prime characteristic and noncommutative Springer resolution (with an Appendix by Eric Sommers)}, http://arxiv.org/abs/1001.2562, to appear in Ann. Math. 

\bibitem{Ca}
R. W. Carter, \emph{Finite groups of Lie type. Conjugacy classes and complex characters}. Wiley Classics Library. John Wiley \& Sons, Chichester, 1993.





\bibitem{C}
D.~Ciubotaru,
\emph{Spin representations of Weyl groups and the Springer correspondence}, J. Reine Angew. Math. {\bf 671} (2012), 199--222.



\bibitem{COT}
D.~Ciubotaru, E.~Opdam, P.~Trapa, \emph{Algebraic and analytic Dirac
  induction for graded affine Hecke algebras}, to appear in
J. Inst. Math. Jussieu.

\bibitem{CT}
D.~Ciubotaru, P.~Trapa, \emph{Characters of Springer representations on elliptic conjugacy classes},  Duke Math. J. {\bf 162} (2013), no. 2, 201--223. 


\bibitem{CM}
D.H.~Collingwood, W.M.~McGovern,
\emph{Nilpotent orbits in semisimple Lie algebras}, Van Nostrand Reinhold Mathematics Series. Van Nostrand Reinhold Co., New York, 1993. xiv+186 pp.

\bibitem{DLP}
C.~De Concini, G.~Lusztig, C.~Procesi,
\emph{Homology of the zero-set of a nilpotent vector field on a flag manifold},
J. Amer. Math. Soc. {\bf 1} (1988), no. 1, 15--34. 


\bibitem{GKP}
M.~Geck, S.~Kim, G.~Pfeiffer,
\emph{Minimal length elements in twisted conjugacy classes of finite
  Weyl groups}, J.~Algebra \textbf{229} (2000), 570--600.

\bibitem{Gor}
D.~Gorenstein, R.~Lyons, R.~Solomon, \emph{The classification of the
  finite simple groups, number 3}, Math.~Surveys~and~Monographs,
vol. {\bf 40}, no. 3, Amer.~Math.~Soc., Providence, Rhode Island, 1994.

%\bibitem{GS}
%P.~Gunnels, E.~Sommers,
%\emph{A characterization of Dynkin elements}, Math.~Res.~Letters {\bf
%  10} (2003), no. 2-3, 363--373.



\bibitem{He}
X.~He,
\emph{Minimal length elements in some double cosets of {C}oxeter groups}, Adv. Math. \textbf{215} (2007), no.~2, 469--503.


\bibitem{HS}
R.~Hotta, T.~A.~Springer, \emph{A specialization theorem for certain Weyl group representations}, Invent.~Math. {\bf 41} (1977), 113--127.

\bibitem{S-Kato}
S.-I. Kato, \emph{A realization of irreducible representations of affine Weyl groups}, Indag. Math. {\bf 45} (1983), 193--201.


\bibitem{Ka}
S.~Kato,
\emph{A homological study of Green polynomials}, preprint, \texttt{arXiv:1111.4640}.

\bibitem{LS}
M. Liebeck and G. Seitz, \emph{Unipotent and nilpotent classes in simple algebraic groups and Lie algebras},
Mathematical Surveys and Monographs, {\bf 180}, American Mathematical Society, Providence, RI, 2012.


\bibitem{L5}
G.~Lusztig,
\emph{Character sheaves. V}, Adv. in Math. {\bf 61} (1986), no. 2, 103--155. 


\bibitem{L1}
G.~Lusztig,
\emph{Affine Hecke algebras and their graded version},
J. Amer. Math. Soc. {\bf 2} (1989), no. 3, 599--635.

\bibitem{L2}
G.~Lusztig,
\emph{Cuspidal local systems and graded Hecke algebras II}, Canadian
Math. Soc. Conf.  Proc., vol. {\bf 16} (1995), 217--275. 

\bibitem{L3}
G.~Lusztig,
\emph{Cuspidal local systems and graded Hecke algebras III}, Represent. Theory {\bf 6} (2002), 202–242.




\bibitem{L-discon}
G.~Lusztig,
\emph{Character sheaves on disconnected groups. I}, Represent. Theory
{\bf 7} (2003), 374--403.

%\bibitem{L-priv}
%G.~Lusztig, 
%\emph{private communication}.

\bibitem{Mo}
A.~Morris, \emph{Projective characters of exceptional Weyl groups}, J. Algebra {\bf 29}  (1974), 567--586.

\bibitem{OS}
E.~Opdam, M.~Solleveld,
\emph{Homological algebra of affine Hecke algebras}, Adv.~Math. {\bf 220} (2009), 1549--1601.

\bibitem{Rea}
E.~Read,
\emph{On projective representations of the finite
  reflection groups of type $B_l$ and $D_l$},
J.~London~Math.~Soc. (2), {\bf 10} (1975), 129--142.


\bibitem{Re}
M.~Reeder,
\emph{Euler-Poincar\'e pairings and elliptic representations of Weyl
  groups and $p$-adic groups}, Compositio Math. \textbf{129} (2001), 149--181.

\bibitem{Re2}
M.~Reeder,
\emph{Formal degrees and $L$-packets of unipotent discrete series
  representations of exceptional $p$-adic groups},
J.~Reine~Angew.~Math. {\bf 520} (2000), 37--93.

\bibitem{Re3}
M.~Reeder,
\emph{Torsion automorphisms of simple Lie algebras},
Enseign.~Math. (2) {\bf 56} (2010), no. 1-2, 3--47. 


\bibitem{Sh}
T.~Shoji,
\emph{On the Green polynomials of classical groups},
Invent. Math. {\bf 74} (1983), no. 2, 239--267. 

\bibitem{Sh2}
T.~Shoji, \emph{On the Green polynomials of a Chevalley group of type F4},
Comm.~Algebra {\bf 10} (1982), no. 5, 505--543. 

%\bibitem{Spa}
%N.~Spaltenstein,
%\emph{Classes unipotentes et sous-groupes de Borel} (French), Lecture Notes in Mathematics, {\bf 946},% Springer-Verlag, Berlin-New York, 1982.


\bibitem{Spr}
T.~A.~Springer,
\emph{Trigonometric sums, Green functions of finite groups and representations of Weyl groups},
Invent. Math. {\bf 36} (1976), 173--207. 

\bibitem{Sp-Weyl}
T.~A.~Springer, \emph{A construction of representations of Weyl groups}, Invent. Math. {\bf 44} (1978), 279--293. 

\bibitem{St1}
R.~Steinberg, \emph{Endomorphisms of linear algebraic groups}, Mem. Amer. Math. Soc. {\bf 80} (1968), 1--108.

\bibitem{St}
J.~Stembridge,
\emph{Shifted tableaux and the projective
  representations of symmetric groups}, Adv. Math. {\bf 74} (1989), 87--134.

\end{thebibliography}
\end{document}